\theoremstyle{plain}
\newtheorem{theorem}{Theorem}[section]
\newtheorem{lemma}[theorem]{Lemma}
\newtheorem{corollary}[theorem]{Corollary}
\newtheorem{proposition}[theorem]{Proposition}
\newtheorem{definition}[theorem]{Definition}
\newtheorem{question}[theorem]{Question}
\newtheorem{conjecture}[theorem]{Conjecture}
\theoremstyle{definition}
\newtheorem{remark}[theorem]{Remark}
\newtheorem*{acknowledgements*}{Acknowledgements}
\newtheorem{example}[theorem]{Example}
\newcommand{\exampleqed}{\ensuremath{\Diamond}\par}
\newcommand{\cuadri}{\square^{\mathbb{Z}^2}}
\newcommand{\ZZ}{\mathbb{Z}}			%
\newcommand{\NN}{\mathbb{N}}			%
\newcommand{\RR}{\mathbb{R}}			%
\newcommand{\FF}{\mathcal{F}}
\newcommand{\C}{\mathcal{C}}
\newcommand{\htop}{h_{\text{top}}}
\newcommand{\entsft}{\mathcal{E}_{\text{SFT}}}
\newcommand{\isdef}{\triangleq}			%
\newcommand{\supp}{%
	\operatorname{\mathrm{supp}}%
}
\newcommand{\ee}{\mathrm{e}}			%
\newcommand{\define}[1]{\textbf{#1}}
\title{%
	On the entropies of subshifts of finite type on countable amenable groups\\
}
\author{Sebasti\'an Barbieri}
\affil{Universidad de Santiago de Chile}
\begin{document}
	
	\maketitle	
	
	\begin{abstract}
		Let $G,H$ be two countable amenable groups. We introduce the notion of group charts, which gives us a tool to embed an arbitrary $H$-subshift into a $G$-subshift. Using an entropy addition formula derived from this formalism we prove that whenever $H$ is finitely presented and admits a subshift of finite type (SFT) on which $H$ acts freely, then the set of real numbers attained as topological entropies of $H$-SFTs is contained in the set of topological entropies of $G$-SFTs modulo an arbitrarily small additive constant for any finitely generated group $G$ which admits a translation-like action of $H$. In particular, we show that the set of topological entropies of $G$-SFTs on any such group which has decidable word problem and admits a translation-like action of $\ZZ^2$ coincides with the set of non-negative upper semi-computable real numbers. We use this result to give a complete characterization of the entropies of SFTs in several classes of groups. 
		
		\textbf{Corrigendum}: An error has been found in the proof of Theorem 4.7. We have added a corrigendum appendix which explains the error, discusses possible solutions and details which results from Section 5 still hold (the only result that is no longer proven is Corollary 5.12). We also provide an update on the state of the art concerning the questions asked in Section 6. 
	\end{abstract}
	
	\medskip	
	
	\noindent
	\textbf{Key words and phrases:} topological entropy, symbolic dynamics, subshifts of finite type, amenable groups, cocycles of group actions.
	\smallskip
	
	\noindent
	\textbf{MSC2010:} \textit{Primary:}
	37B40, %
	37B10. %
	\textit{Secondary:}
	22F05, %
	37B05.  %
	
\section{Introduction}

	The topological entropy of an action $G \curvearrowright X$ of an amenable group $G$ on a compact metric space $X$ by homeomorphisms is a non-negative number which counts the asymptotic exponential growth rate of the number of distinguishable orbits of the system. Initially introduced by Adler, Konheim and McAndrew~\cite{AdlerKonheimMcAndrew1965} for $\ZZ$-actions, it is an important conjugacy invariant which has been studied broadly.
	
	A particularly interesting case is when $G\curvearrowright X$ is a subshift of finite type ($G$-SFT). Up to dynamical conjugacy, there are countably many distinct subshifts of finite type, and therefore at most countably many real numbers can be attained as the entropy of a subshift of finite type. A classical result by Lind~\cite{Lind1984} classifies the topological entropies attainable by $\ZZ$-SFTs as non-negative rational multiples of logarithms of Perron numbers. This characterization relies on a full description of the configurations of $\ZZ$-SFTs as bi-infinite paths on a finite graph and a study of the eigenvalues of their adjacency matrices.
	
	A more recent result by Hochman and Meyerovitch~\cite{HochmanMeyerovitch2010} completely classifies the entropies of $\ZZ^d$-SFTs. Interestingly, they show that for $d \geq 2$ the characterization is of an algorithmic nature. More precisely, the numbers attained as entropies of $\ZZ^d$-SFTs coincides with the set of non-negative upper semi-computable real numbers. Their classification relies on a construction which embeds arbitrarily large computation diagrams of an arbitrary Turing machine into a $\ZZ^d$-SFT. 
	
	The purpose of this study is to explore what entropies can be achieved by subshifts of finite type defined on an arbitrary amenable group $G$. In particular, we shall present a way to transfer entropies attainable by SFTs on a group $H$ to $G$ whenever $H$ can be ``geometrically embedded into $G$". A simple observation is that whenever $H$ is a subgroup of an amenable group $G$, then any number obtained as the topological entropy of an $H$-SFT $X$ can also be obtained as the topological entropy of a $G$-SFT $Y$. Indeed, this is achieved by letting $Y$ be the set of all configurations such that every $H$-coset contains a configuration of $X$ and there are no restrictions between each individual $H$-coset. 

	In this article we generalize the above construction introducing the notion of group charts. A group chart $(X,\gamma)$ is a dynamical structure consisting of a dynamical system $G \curvearrowright X$ and a continuous cocycle $\gamma\colon H \times X \to G$ that associates configurations in $X$ to partitions of its underlying group $G$ into quotients of $H$. Whenever $X$ is a $G$-subshift, we can use the partitions induced by the chart $(X,\gamma)$ to embed any $H$-subshift $Y$ into a $G$-subshift $Y_{\gamma}[X]$ which stores the information of $Y$ in a natural way. We shall show (\Cref{theorem_addition_formula}) that for any such embedding in which the cocycle induces free actions, the topological entropy satisfies the following addition formula,\[\htop(G \curvearrowright Y_{\gamma}[X]) = \htop(G \curvearrowright X)+\htop(H \curvearrowright Y). \]
	Furthermore, if both $X$ and $Y$ are SFTs, we have that $Y_{\gamma}[X]$ is an SFT. Therefore this formula can be used to embed the entropies of $H$-SFTs into the set of entropies of $G$-SFTs up to a fixed additive constant. We shall introduce the notion of group charts and give a proof of the addition formula on~\Cref{section_charts}.
	
	In~\Cref{section_reduce_ent_charts} we shall show that whenever a group chart is given by a $G$-SFT $X$, then we can choose it in such a way that its entropy is arbitrarily small~(\Cref{corollary_reducing_chart_entropy}). This will follow from a theorem that gives a canonical way of reducing the entropy of subshifts of finite type defined on an arbitrary countable amenable groups~(\Cref{theorem_tilings_forthewin}). We shall prove this theorem using the theory of quasitilings introduced by Ornstein and Weiss~\cite{OrnWei1987} and a recent result of Downarowicz, Huczec and Zhang~\cite{DownarowiczHuczekZhang2019}.
	
	In~\Cref{section_conditions_charts} we will characterize the existence of free charts, that is, charts for which every element of $x$ codes a true partition of $G$ into copies of $H$, through the notion of translation-like actions introduced by Whyte~\cite{Whyte1999}. Furthermore, following the ideas of Jeandel~\cite{Jeandel2015}, we shall show that whenever $H$ is finitely presented and there exists a non-empty $H$-SFT on which $H$ acts freely, then one can always find a free chart $(X,\gamma)$ for which $X$ is a $G$-SFT. Putting all of the previous results together, we shall show the following result.
	
	{
		\renewcommand{\thetheorem}{\ref{theorem_HG}}
		\begin{theorem}
			Let $G,H$ be finitely generated amenable groups and let $\entsft(H)$ and $\entsft(G)$ respectively denote the set of real numbers attainable as topological entropies of an SFT in each group. Suppose that 
			\begin{enumerate}
				\item $H$ admits a translation-like action on $G$.
				\item $H$ is finitely presented.
				\item There exists a non-empty $H$-SFT for which the $H$-action is free.
			\end{enumerate}
			Then, for every $\varepsilon >0$ there exists a $G$-SFT $X$ such that $h_{top}(G\curvearrowright X) < \varepsilon$ and \[h_{top}(G\curvearrowright X)+ \entsft(H) \subset \entsft(G).\]
		\end{theorem}
		\addtocounter{theorem}{-1}
	}	
	
	In~\Cref{section_characterization_Z2} we shall apply the above theorem to study the groups on which $\ZZ^2$ acts translation-like. It shall follow that modulo a computability obstruction, any finitely generated amenable group on which $\ZZ^2$ acts translation-like admits the same characterization of the set of numbers that can be attained as topological entropies of subshifts of finite type as $\ZZ^2$. Namely,

		{
			\renewcommand{\thetheorem}{\ref{theorem_caract_entropies_G_z2_translation_like}}
			\begin{theorem}
				Let $G$ be a finitely generated amenable group with decidable word problem which admits a translation-like action by $\ZZ^2$. The set of entropies attainable by $G$-subshifts of finite type is the set of non-negative upper semi-computable numbers.
			\end{theorem}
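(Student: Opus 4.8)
The plan is to prove the two inclusions separately: that every entropy of a $G$-SFT is a non-negative upper semi-computable number, and conversely that every non-negative upper semi-computable number arises as such an entropy. The first inclusion is where the decidability of the word problem of $G$ is used, while the second combines \Cref{theorem_HG} (applied with $H = \ZZ^2$) with the theorem of Hochman and Meyerovitch~\cite{HochmanMeyerovitch2010}, which asserts that $\entsft(\ZZ^2)$ is precisely the set of non-negative upper semi-computable reals.

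For the inclusion of $\entsft(G)$ into the non-negative upper semi-computable reals, let $X$ be a $G$-SFT with a finite list of forbidden patterns. Non-negativity of $\htop(G \curvearrowright X)$ is immediate. Since $G$ is amenable, by the Ornstein--Weiss theory the subadditive and $G$-invariant quantity $F \mapsto \log N_F(X)$, where $N_F(X)$ counts the globally admissible patterns on a finite set $F$, satisfies
\begin{equation*}
\htop(G \curvearrowright X) = \inf_{F}\frac{\log N_F(X)}{\abs{F}},
\end{equation*}
the infimum ranging over finite subsets $F \subseteq G$. The count $N_F(X)$ is itself not directly computable, but by compactness it equals $\inf_{E \supseteq F} M_{F,E}$, where $M_{F,E}$ is the number of patterns on $F$ that extend to a pattern on $E$ containing no translate of a forbidden pattern; indeed a pattern lies in the language of an SFT exactly when it extends to locally admissible patterns on every finite region. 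Because $G$ has decidable word problem we may effectively enumerate finite subsets $F \subseteq E$ of $G$, compute $\abs{F}$, and count $M_{F,E}$ by checking the finitely many forbidden patterns against all translates inside $E$. Hence $\htop(G\curvearrowright X)$ is an infimum of an effectively computable doubly-indexed family of rationals (approximating the logarithms from above), and is therefore upper semi-computable.

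For the reverse inclusion I would apply \Cref{theorem_HG} with $H = \ZZ^2$. Its three hypotheses hold: (i) $\ZZ^2$ acts translation-like on $G$ by assumption; (ii) $\ZZ^2$ is finitely presented; and (iii) there exists a non-empty strongly aperiodic $\ZZ^2$-SFT, whose action is necessarily free. Thus for every $\varepsilon > 0$ there is a $G$-SFT $X_\varepsilon$ with $c_\varepsilon \isdef \htop(G \curvearrowright X_\varepsilon) < \varepsilon$ and $c_\varepsilon + \entsft(\ZZ^2) \subseteq \entsft(G)$. Let $t \geq 0$ be upper semi-computable. If $t = 0$ then $t \in \entsft(G)$ via the one-point SFT, so assume $t > 0$ and choose $\varepsilon < t$. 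By Hochman--Meyerovitch $\entsft(\ZZ^2)$ is exactly the set of non-negative upper semi-computable reals, so it suffices to check that $t - c_\varepsilon$ belongs to this set: it is positive because $c_\varepsilon < \varepsilon < t$, and it is upper semi-computable provided $c_\varepsilon$ is computable, since subtracting a computable number preserves upper semi-computability. Granting this, $t = c_\varepsilon + (t - c_\varepsilon) \in c_\varepsilon + \entsft(\ZZ^2) \subseteq \entsft(G)$, which finishes the inclusion.

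The main obstacle is precisely the computability of the constant $c_\varepsilon$. The first inclusion already shows $c_\varepsilon$ is upper semi-computable, but the argument above needs it to be genuinely computable (equivalently, also lower semi-computable) so that $t - c_\varepsilon$ remains upper semi-computable; note that the difference of two merely upper semi-computable numbers need not be upper semi-computable. I would resolve this by tracking the entropy through the entropy reduction of \Cref{corollary_reducing_chart_entropy}: the charts produced there are built from explicit combinatorial data (a quasitiling together with a fixed aperiodic SFT), and I expect their entropy to be a computable number that can be driven below any prescribed computable threshold. It therefore suffices to arrange a sequence of charts with computable entropies $c_n \to 0$, and then, for each target $t > 0$, to select some $c_n < t$ exactly as above.
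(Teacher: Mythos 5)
Your two inclusions follow exactly the paper's skeleton: the upper bound is the paper's \Cref{proposition_ECSubshift_has_USC_entropy} (your compactness identity $N_F(X)=\inf_{E\supseteq F}M_{F,E}$ is \Cref{lemma_aproximalito}, and your effective enumeration over finite subsets using the decidable word problem is the algorithm $T_{\texttt{ent}}$ there), while the realization direction begins, as in the paper, by feeding $H=\ZZ^2$ into \Cref{theorem_HG} and invoking Hochman--Meyerovitch. The genuine gap is in your endgame, and you name it yourself: the entire second inclusion rests on the claim that $c_\varepsilon=\htop(G\curvearrowright X_\varepsilon)$ can be taken to be a \emph{computable} real, and this is never proved --- your final paragraph says ``I expect'' and stops. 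Nothing in the construction supports the expectation: the chart of \Cref{proposition_all_we_need_for_charts} is manufactured from an arbitrary translation-like action of $\ZZ^2$ on $G$, a bare existence hypothesis with no effectivity attached, and the entropy reduction (\Cref{theorem_tilings_forthewin} via \Cref{corollary_reducing_chart_entropy}) controls the entropy of the resulting SFT only from above by $\varepsilon$; no exact value and no approximation from below is available, so all that can be certified about $c_\varepsilon$ is upper semi-computability --- which, as you correctly note, is useless for concluding that $t-c_\varepsilon$ is upper semi-computable. The gap is not cosmetic: what you actually establish is $\{0\}\cup\bigcup_{\varepsilon>0}\bigl(c_\varepsilon+\entsft(\ZZ^2)\bigr)\subset\entsft(G)\subset\entsft(\ZZ^2)$, and these inclusions alone do not force equality. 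For instance, if $c$ is upper semi-computable and non-computable, the set $A=\{0\}\cup\bigcup_{n}\bigl(c2^{-n}+\entsft(\ZZ^2)\bigr)$ satisfies all of them (it is even stable under addition) yet contains no positive computable real, so for example $1$ is missed; some further input about the numbers $c_\varepsilon$ is therefore indispensable, and you supply none.

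For comparison, the paper's own proof does not route through computability of $c_\varepsilon$ at all: after obtaining $\entsft(G)\subset\entsft(\ZZ^2)$ from \Cref{proposition_ECSubshift_has_USC_entropy}, it concludes in one line that since $0\in\entsft(G)$ and the upper semi-computable numbers are stable under addition, letting $\varepsilon$ go to zero yields $\entsft(G)=\entsft(\ZZ^2)$ --- that is, it never writes $t=c_\varepsilon+(t-c_\varepsilon)$ and so never confronts the subtraction issue that your decomposition creates. You have in fact isolated precisely the point that this one-line conclusion passes over quickly, and flagging it is to your credit, but flagging is not filling: as submitted, your argument proves the upper inclusion and proves $c_\varepsilon+\entsft(\ZZ^2)\subset\entsft(G)$, and the step from there to the full statement is missing.
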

			\addtocounter{theorem}{-1}
		}
		
		Finally, in~\Cref{section_consequences} we shall use~\Cref{theorem_caract_entropies_G_z2_translation_like} to give a characterization of the numbers attainable as topological entropies of subshifts of finite type in several classes of groups. More precisely, we shall give a complete classification for polycyclic-by-finite groups~(\Cref{theorem_polycyclic}), products of two infinite and finitely generated amenable groups with decidable word problem~(\Cref{corollary_entropy_ofproducts}), countable amenable groups which admit a presentation with decidable word problem and a finitely generated subgroup on which $\ZZ^2$ acts translation-like~(\Cref{corollary_caract_entropies_full}) and infinite and finitely generated amenable branch groups with decidable word problem~(\Cref{theorem_branch_groups}).

\section{Preliminaries and notation}

In this note we shall consider left actions $G \curvearrowright X$ of countable amenable groups $G$ over compact metric spaces $X$ by homeomorphisms. Let us denote by $F\Subset G$ a finite subset of $G$ and by $1_G$ the identity of $G$. For $K \Subset G$ and $\varepsilon>0$ we say that $F \Subset G$ is left $(K,\varepsilon)$-invariant if $|KF \triangle F| \leq \varepsilon|F|$. From this point forward we shall omit the word left and plainly speak about $(K,\varepsilon)$-invariant sets. A sequence $\{F_n\}_{n \in \NN}$ of finite subsets of $G$ is called a F\o lner sequence if for every $K \Subset G$ and $\varepsilon>0$ the sequence is eventually $(K,\varepsilon)$-invariant.

\subsection{Shift spaces}

Let $\Sigma$ be a finite set and $G$ be a group. The set $\Sigma^G = \{ x\colon G \to \Sigma\}$ equipped with the left group action $G \curvearrowright X$ given by 
$gx(h) \isdef x(hg)$ is the \define{full $G$-shift}. The elements $a \in \Sigma$ and $x \in \Sigma^G$ are called \define{symbols} and \define{configurations} respectively. We endow $\Sigma^G$ with the product topology generated by the clopen subbase given by the \define{cylinders} $[a]_g \isdef \{x \in \Sigma^G~|~x(g) = a\}$. A \define{support} is a finite subset $F \Subset G$. Given a support $F$, a \define{pattern} with support $F$ is an element $p \in \Sigma^F$ and we write $\operatorname{supp}(p) = F$. We denote the cylinder generated by $p$ by $[p] = \bigcap_{h \in F}[p(h)]_{h}$.

A subset $X \subset \Sigma^G$ is a \define{$G$-subshift} if and only if it is $G$-invariant and closed in the product topology. Equivalently, $X$ is a $G$-subshift if and only if there exists a set of forbidden patterns $\FF$ such that\[X=X_\FF \isdef  {\Sigma^G \setminus \bigcup_{p \in \FF, g \in G} g[p]}.\]

Given a subshift $X \subset \Sigma^G$ and a support $F \Subset G$ the \define{language with support $F$} is the set $L_{F}(X) = \{ p \in \Sigma^F \mid [p] \cap X \neq \varnothing \}$ of all patterns which appear in some configuration $x \in X$. The \define{language} of $X$ is the set $L(X) = \bigcup_{F \Subset G}L_{F}(X)$.

\begin{remark}
	It is also possible to define the left $G$-action by $gx(h) \isdef x(g^{-1}h)$ instead of $x(hg)$. In this article we chose the latter in order to minimize the amount of superindices $^{-1}$ and to make the notation compatible with the setting of~\cite{DownarowiczHuczekZhang2019}, whose results we shall use to prove~\Cref{theorem_tilings_forthewin}. 
\end{remark}

\begin{definition}
	We say that a subshift $X$ is of \define{finite type (SFT)} if there exists a finite set $\FF$ of forbidden patterns such that $X = X_{\FF}$.
\end{definition}

\subsection{Topological entropy}

Let $G \curvearrowright X$ be the action of a group over a compact metrizable space by homeomorphisms. Given two open covers $\mathcal{U},\mathcal{V}$ of $X$ we define their \define{join} by $\mathcal{U} \vee \mathcal{V} = \{U \cap V \mid U \in \mathcal{U}, V\in \mathcal{V}   \}$. For $g \in G$ let $g\mathcal{U} = \{gU \mid U \in \mathcal{U}\}$ and denote by $N(\mathcal{U})$ the smallest cardinality of a subcover of $\mathcal{U}$. If $F$ is a finite subset of $G$, denote by $\mathcal{U}^F$ the join

$$\mathcal{U}^F = \bigvee_{g \in F}g^{-1}\mathcal{U}.$$ 

\begin{definition}
	Let $G \curvearrowright X$ be the action of a countable amenable group, $\mathcal{U}$ an open cover and $\{F_n\}_{n \in \NN}$ a F\o lner sequence for $G$. We define the \define{topological entropy of
	$G \curvearrowright X$ with respect to $\mathcal{U}$} as%
	\[
	h_{\text{top}}(G \curvearrowright X,\mathcal{U})=\lim_{n\rightarrow\infty}\frac{1}{\left\vert
		F_{n}\right\vert }\log N(\mathcal{U}^{F_{n}}).
	\]
	
\end{definition}

The function $F \mapsto \log N(\mathcal{U}^{F})$ is subadditive and thus the limit does not depend on the choice of F\o lner sequence, see for instance~\cite{OrnWei1987,Krieger2007_ornsteinweiss}. The \define{topological entropy} of $G \curvearrowright X$ is defined as \[
h_{\text{top}}(G \curvearrowright X)=\sup_{\mathcal{U}}h_{\text{top}}(G \curvearrowright X,\mathcal{U}).
\]

In the case where $G \curvearrowright X$ is expansive, any open cover $\mathcal{U}$ whose elements have diameter less than the expansivity constant achieves the supremum. Particularly, in the case of a subshift $X \subset \Sigma^G$ we may consider the partition $\xi = \{ [a]_{1_G} \mid a \in \Sigma \}$. For a finite $F \subset G$ we obtain that $\xi^F = \{ [p] \mid p \in L_F(X) \}$. Hence, whenever $X$ is a subshift its topological entropy can be computed by \[
h_{\text{top}}(G \curvearrowright X)=\lim_{n\rightarrow\infty}\frac{1}{\left\vert
	F_{n}\right\vert }\log(|L_{F_n}(X)|).
\]

A more intuitive way to understand this limit, is that the function $F \mapsto \frac{1}{\left\vert
	F\right\vert }\log(|L_{F}(X)|)$ converges as $F$ becomes more and more invariant, that is, for every $\varepsilon>0$ there exists $K \Subset G$ and $\delta>0$ such that for any $(K,\delta)$-invariant set $F$ we have $|h_{\text{top}}(G \curvearrowright X) - \frac{1}{\left\vert
	F\right\vert }\log(|L_{F}(X)|) | \leq \varepsilon$. For a self contained proof and relevant background see~\cite[Theorem 4.38]{KerrLiBook2016}.

In the case when the open cover $\mathcal{U}$ consists of pairwise disjoint open sets, it can be shown that the function $F \mapsto \log N(\mathcal{U}^{F})$ is not only subadditive, but satisfies Shearer's inequality (see~\cite[Corollary 6.2]{DownFrejRomag2015}). This in turn implies that in the case of a subshift we may write: \begin{align}\label{eq_entropyforidiots}
h_{\text{top}}(G \curvearrowright X)=\inf_{F \in \mathcal{F}(G)}\frac{1}{\left\vert
	F\right\vert }\log(|L_{F}(X)|).
\end{align}

where $\mathcal{F}(G)$ denotes the set of all finite subsets of $G$, see~\cite[Corollary 6.3]{DownFrejRomag2015}). 

\begin{remark}
	In fact the result that topological entropy can be computed as an infimum over all finite subsets holds for any $G\curvearrowright X$, although it may not hold individually for every partition $\mathcal{U}$. This was proven in~\cite{DownFrejRomag2015} using the variational principle. A good way to think about it is that in the context of amenable groups, the topological entropy coincides with the naive entropy of Burton~\cite{burton2017naive}.
\end{remark}

Let us introduce the following notation which will be useful in the remainder of the article. For a group $G$, we denote the set of real numbers attained as topological entropies of $G$-SFTs by $\entsft(G)$.

\[ \entsft(G) = \{r \in \RR \mid \textrm{there exists a } G\textrm{-SFT } X, \htop(G \curvearrowright X) = r \}   \]

 Let us state two classical theorems from the literature which will be used further on. Recall that a \define{Perron number} is a real algebraic integer greater than $1$ and greater than the modulus of its algebraic conjugates. 

\begin{theorem}[Lind~\cite{Lind1984}]
	$\entsft(\ZZ)$ is the set of non-negative rational multiples of logarithms of Perron numbers.
\end{theorem}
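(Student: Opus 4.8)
The plan is to prove the two inclusions separately, translating the dynamical statement into a question about the spectral radii of nonnegative integer matrices. The first step is the classical reduction of an arbitrary $\ZZ$-SFT to a one-step SFT: if $X=X_\FF$ with all forbidden patterns supported on $\{0,\dots,N\}$, then recoding $X$ over the alphabet $L_{\{0,\dots,N-1\}}(X)$ of admissible length-$N$ blocks produces a conjugate subshift whose only constraints are nearest-neighbour. Such a subshift is a topological Markov chain given by a $0/1$ transition matrix, equivalently the edge shift on a finite directed graph $\Gamma$ with nonnegative integer adjacency matrix $A$. Taking the F\o lner sequence $F_n=\{0,\dots,n-1\}$, the cardinality $|L_{F_n}(X)|$ coincides, up to subexponential corrections, with $\sum_{i,j}(A^{n})_{ij}$, which by Perron--Frobenius theory grows like $\rho(A)^n$, where $\rho(A)$ denotes the spectral radius; hence $\tfrac1n\log|L_{F_n}(X)|\to\log\rho(A)$. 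Since conjugacy preserves entropy, $\htop(\ZZ\curvearrowright X)=\log\rho(A)$.

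For the inclusion of $\entsft(\ZZ)$ into the set of nonnegative rational multiples of logarithms of Perron numbers, I would decompose $\Gamma$ into its strongly connected components. The spectral radius $\rho(A)$ equals the maximum of the spectral radii of these components, attained at an irreducible component with matrix $B$, say of period $p$. Then $B^{p}$ is, after reindexing the vertices, block diagonal with primitive blocks, each of spectral radius $\rho(B)^{p}$. For a primitive nonnegative integer matrix the Perron--Frobenius eigenvalue is a simple real eigenvalue strictly dominating all others in modulus; being a root of the monic integer characteristic polynomial it is an algebraic integer, and its Galois conjugates, which sit among the strictly smaller eigenvalues, all have modulus below it. Hence $\mu:=\rho(B)^{p}$ is a Perron number when $\rho(B)>1$ (and the entropy vanishes otherwise), and $\htop(\ZZ\curvearrowright X)=\log\rho(B)=\tfrac1p\log\mu$, a nonnegative rational multiple of the logarithm of a Perron number.

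The reverse inclusion is where the real work lies. Given a target value $\tfrac mn\log\mu$ with $\mu$ Perron, I would first rewrite it as $\tfrac1n\log\nu$ with $\nu:=\mu^{m}$, using that powers of Perron numbers are again Perron. The crux is Lind's realization lemma: every Perron number $\nu$ is the spectral radius of some primitive nonnegative integer matrix $M$. Granting this, I would extract the $n$-th root of the entropy by a loop construction: take $n$ disjoint copies $V_0,\dots,V_{n-1}$ of the vertex set of the graph of $M$, place identity transitions from $V_t$ to $V_{t+1}$ for $0\le t\le n-2$, and close the loop with the transitions of $M$ from $V_{n-1}$ to $V_0$. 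The resulting matrix $C$ is irreducible of period $n$, and $C^{n}$ is permutation-conjugate to a block diagonal matrix whose $n$ blocks all equal $M$; therefore $\rho(C)=\nu^{1/n}$ and the edge shift on $C$ is an SFT of entropy $\tfrac1n\log\nu$, as required. The value $0$ is realized by any SFT with a single fixed point.

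The main obstacle is Lind's realization lemma itself, namely the passage from an abstract Perron number to an explicit nonnegative integer matrix. The companion matrix of the minimal polynomial of $\nu$ has $\nu$ as an eigenvalue but generally fails to be nonnegative, so one cannot argue naively. The idea is to exploit the strict dominance of $\nu$ over its conjugates to build a nonnegative integer matrix whose characteristic polynomial is the minimal polynomial of $\nu$ times an auxiliary factor all of whose roots have modulus strictly smaller than $\nu$, so that the Perron--Frobenius eigenvalue of the matrix is forced to be $\nu$. Controlling simultaneously the signs of the entries and the moduli of these spurious eigenvalues is the technical heart of the theorem, and the step I expect to be the most delicate.
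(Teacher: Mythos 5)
The paper offers no proof of this statement: it is Lind's theorem, imported verbatim with a citation to~\cite{Lind1984}, so there is no internal argument to compare yours against. Judged on its own terms, your outline correctly reproduces the known structure of Lind's proof up to one point. The higher-block recoding to a one-step SFT, the identification $\htop(\ZZ\curvearrowright X)=\log\rho(A)$, the reduction to an irreducible component $B$ of period $p$ with $B^p$ block diagonal with primitive blocks, and the conclusion that $\rho(B)^p$ is Perron (when $>1$) are all sound; so is the reverse reduction writing $\tfrac mn\log\mu$ as $\tfrac1n\log\nu$ with $\nu=\mu^m$, and the block-cyclic matrix $C$ with $\rho(C)=\nu^{1/n}$ (your $C$ is indeed irreducible of period exactly $n$, since every cycle traverses the loop and each traversal has length $n$, while primitivity of $M$ makes the gcd of the $M$-cycle lengths equal to $1$).

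The genuine gap is the realization lemma itself: \emph{every Perron number is the spectral radius of a primitive nonnegative integer matrix}. This is not a technical footnote to the theorem --- it \emph{is} the theorem; everything else in your write-up is standard Perron--Frobenius bookkeeping, and you explicitly grant the lemma rather than prove it. The strategy you gesture at (arrange a nonnegative integer matrix whose characteristic polynomial is the minimal polynomial of $\nu$ times a factor with small roots, ``controlling the signs of the entries'') does not by itself go anywhere: strict dominance of $\nu$ over its conjugates constrains the spectrum, but nonnegativity of a matrix is not a spectral condition one can impose by choosing an auxiliary polynomial factor --- exhibiting the nonnegative matrix is the entire content, and there exist polynomials with a dominant root that are characteristic polynomials of no nonnegative matrix of the same size, which is why extra dimensions and a genuinely constructive argument are needed. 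Lind's actual proof works geometrically: one lets $\nu$ act by multiplication on $\ZZ[\nu]\cong\ZZ^d$, uses the strict dominance of the eigenvalue $\nu$ to find a cone around the dominant eigendirection that some iterate of the action maps into itself, and then selects integer vectors in that cone as the coordinate frame with respect to which the (iterated) action has nonnegative integer entries; one must then also extract the correct root, and arrange primitivity. Until you supply an argument at this level of detail, your proposal reduces Lind's theorem to its hardest ingredient and stops there.
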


In order to state the second result, we need to introduce the notion of upper semi-computable numbers, they are also sometimes called ``right-recursively enumerable numbers''.

\begin{definition}
	A real number $r$ is \define{upper semi-computable} if there exists a Turing machine $T$ which on input $n \in \NN$ halts with the coding of a rational number $q_n \geq r$ on its tape such that $\lim_{n \to \infty} q_n = r$.
\end{definition}

\begin{theorem}[Hochman and Meyerovitch~\cite{HochmanMeyerovitch2010}]\label{theorem_HochmanTom}
	For $d \geq 2$, $\entsft(\ZZ^d)$ is the set of non-negative upper semi-computable numbers.
\end{theorem}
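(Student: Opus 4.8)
The statement splits into two inclusions of very different difficulty: every entropy of a $\ZZ^d$-SFT is a non-negative upper semi-computable number (necessity), and every non-negative upper semi-computable number is realized by some $\ZZ^d$-SFT (sufficiency). The plan is to treat them separately.

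For necessity, I would start from the infimum formula~\eqref{eq_entropyforidiots}, which restricted to the F\o lner sequence of boxes $F_n = \{0,\dots,n-1\}^d$ gives $\htop(\ZZ^d \curvearrowright X) = \inf_n \tfrac{1}{n^d}\log\abs{L_{F_n}(X)}$ for an SFT $X = X_\FF$ with $\FF$ finite. The key observation is that global admissibility is co-recursively-enumerable: a pattern $p$ on $F_n$ lies in $L_{F_n}(X)$ if and only if, for every $m \geq n$, it extends to a pattern on $F_m$ containing no translate of a forbidden pattern of $\FF$; the latter is a finite check, so by compactness $\abs{L_{F_n}(X)}$ is the decreasing limit, as $m \to \infty$, of the computable counts $c_{n,m}$ of locally admissible extensions to $F_m$. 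Dovetailing over the pairs $(n,m)$ and taking running minima of rational upper bounds for $\tfrac{1}{n^d}\log c_{n,m}$ produces a computable sequence of rationals, each at least $h$, converging down to $\inf_{n,m}\tfrac1{n^d}\log c_{n,m} = h$. Hence $h$ is upper semi-computable, and it is obviously non-negative.

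For sufficiency, which is the substantial content and is due to Hochman and Meyerovitch~\cite{HochmanMeyerovitch2010}, I would first reduce to $d = 2$: given a $\ZZ^2$-SFT $Y$ of entropy $h$, the $\ZZ^d$-SFT whose configurations are required to lie in $Y$ on every two-dimensional slice (indexed by the remaining $d-2$ coordinates), with no constraints between distinct slices, is again an SFT and has entropy $h$, since a box of side $N$ carries $N^{d-2}$ independent $Y$-patterns of area $N^2$. It then remains to realize an arbitrary non-negative upper semi-computable $h$ as the entropy of a $\ZZ^2$-SFT. Writing $h = \inf_n q_n$ with $q_n$ a computable, non-increasing sequence of rationals with $q_n \geq h$ (replace the given approximations by their running minima), the strategy is to build a layered SFT: a \emph{skeleton} layer which, by means of a self-similar hierarchical structure, forces inside every sufficiently large region a faithful simulation of a fixed Turing machine with an arbitrarily large amount of space and time; and a \emph{data} layer of free symbols whose admissible density in each hierarchical cell of scale $n$ is throttled by the machine's computation of $q_n$, so that the data layer locally contributes entropy close to $q_n$. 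The computation is used only to \emph{forbid} configurations, which is exactly why one can match the infimum $h$ of upper approximations but not a limit from below; this is precisely where upper semi-computability enters.

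The main obstacle, and the heart of the Hochman--Meyerovitch argument, is twofold. First, one must engineer the skeleton so that arbitrarily long computations are forced at all scales by purely local rules, using a fixed-point or Robinson-type self-simulating construction; this is delicate because SFTs cannot directly detect non-halting and the hierarchy must be rigid enough to provide clean, non-interacting computation zones. Second, and most importantly for the entropy bookkeeping, the skeleton must itself carry zero entropy, so that all exponential complexity comes from the data layer; one then has to prove matching upper and lower bounds showing that the total entropy is \emph{exactly} $\inf_n q_n = h$ rather than merely an inequality. Verifying that the auxiliary layers are subexponential while the free layer realizes exactly the throttled density is where essentially all the work lies; once it is done, combining it with the necessity direction and the slice reduction yields the characterization for every $d \geq 2$.
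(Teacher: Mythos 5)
This statement is imported from the literature: the paper offers no proof of it, citing Hochman and Meyerovitch directly, so there is no internal argument to compare yours against. Judged on its own terms, your proposal correctly splits the theorem into two inclusions and handles the easy one soundly: your necessity argument (locally admissible patterns on boxes, compactness, dovetailing over the pairs $(n,m)$ with running minima) is correct, and it is in fact a special case of machinery the paper develops later for a different purpose — \Cref{lemma_aproximalito} together with \Cref{proposition_ECSubshift_has_USC_entropy} establish exactly this conclusion for any effectively closed subshift on any finitely generated amenable group with decidable word problem. Your reduction from $d$ to $2$ by placing independent copies of a $\ZZ^2$-SFT on parallel two-dimensional slices is also correct, and is the same ``free extension'' device the paper formalizes in \Cref{proposition_same_entropy_free_subshift}.

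The gap is the sufficiency direction, and it is not a small one: what you give is a description of the Hochman--Meyerovitch construction, not a proof. Everything you yourself flag as the heart of the argument — a self-similar skeleton forcing faithful, arbitrarily long Turing computations inside every large region by purely local rules, the verification that this skeleton carries zero entropy, and the two-sided estimate showing the data layer contributes exactly $\inf_n q_n$ — is precisely the content of their paper, and none of it is carried out here. In particular, the statement that one can ``throttle'' the admissible density of free symbols at scale $n$ by the computed value $q_n$ conceals the hardest step: turning density constraints enforced simultaneously at infinitely many scales into an exact entropy value requires both an upper bound (the auxiliary layers are subexponential and the constraint is actually enforced) and a lower bound (enough configurations survive all scales at once), neither of which is sketched in a checkable form. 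As written, your text plays the same role the statement plays in this paper — an attributed black box — but it cannot stand as a proof of the theorem.
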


\section{Realization of entropies of subshifts of finite type}

\subsection{Group charts and the addition formula}\label{section_charts}

\begin{definition}
	Let $G,H$ be two topological groups and let $X$ be a compact topological space on which $G$ acts on the left by homeomorphisms. A continuous map $\gamma \colon H \times X \to G$ is called an \define{$H$-cocycle} if it satisfies the equation \[  \gamma(h_1h_2,x) =  \gamma(h_1,\gamma(h_2,x)x)\cdot \gamma(h_2,x) \mbox{ for every $h_1,h_2$ in $H$}.\]
\end{definition}

The cocycle equation can be represented by the diagram shown on~\Cref{fig:diagram_cocycle}. Let us clarify how this equation fits within the classical setting of cocycles. A continuous map $\gamma$ as above induces an action $H \curvearrowright X$ by setting $h \cdot x = \gamma(h,x)x$, where the product on the right is the one associated to the action $G \curvearrowright X$. With this action $H \curvearrowright X$ in mind, the equation simplifies to the better known equation for cocycles \[ \gamma(h_1h_2,x) =  \gamma(h_1,h_2 \cdot x)\cdot \gamma(h_2,x) \mbox{ for every $h_1,h_2$ in $H$}.   \]

Any $H$-cocycle $\gamma$ induces a family $\{H \overset{x}{\curvearrowright} G\}_{x \in X}$ of left $H$-actions on $G$. Indeed, if for fixed $x \in X$ we define for $h \in H$ and $g \in G$, the action given by $h \cdot_x g \isdef \gamma(h,gx)g$, then for all $h_1,h_2 \in H$ we have \begin{align*}
(h_1h_2) \cdot_x g & = \gamma(h_1h_2,gx)g\\ & = (\gamma(h_1,\gamma(h_2,gx)gx)\cdot \gamma(h_2,gx) )g \\ & =\gamma(h_1,(\gamma(h_2,gx)g)x)\cdot (\gamma(h_2,gx)g) \\ & = h_1 \cdot_x (\gamma(h_2,gx)g) \\ & = h_1 \cdot_x (h_2 \cdot_x g).
\end{align*}

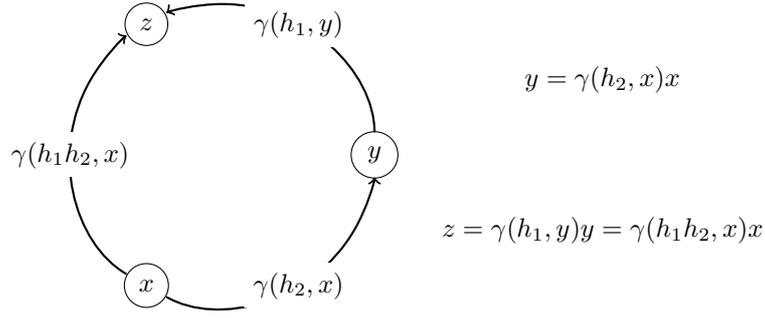
\begin{figure}[h!]
	\centering
	\begin{tikzpicture}
	\node[circle, draw] (A) at (240: 2cm)  {$x$};
	\node[circle, draw] (B) at (0:2cm)  {$y$};
	\node[circle, draw] (C) at (120:2cm) {$z$};
	\draw [->, thick, shorten >=0.3cm,shorten <=0.3cm] (A) arc (240:360:2cm) node[midway,fill=white] {$\gamma(h_2,x)$};
	\draw [->, thick, shorten >=0.3cm,shorten <=0.3cm] (A) arc (240:120:2cm) node[midway,fill=white] {$\gamma(h_1h_2,x)$};
	\draw [->, thick, shorten >=0.3cm,shorten <=0.3cm] (B) arc (0:120:2cm) node[midway, fill=white] {$\gamma(h_1,y)$};
	
	\node at (5,1) {$y = \gamma(h_2,x)x$};
	\node at (5,-1) {$z = \gamma(h_1,y)y = \gamma(h_1h_2,x)x$};
	\end{tikzpicture}
	\caption{The circles $x,y,z$ represent points in the space $X$ while the arrows represent left multiplication by group elements. The cocycle equation states that the arrows commute: $\gamma(h_1h_2,x) = \gamma(h_1,y)\gamma(h_2,x)$.}
	\label{fig:diagram_cocycle}
\end{figure}

\begin{remark}
	If $H$ is a finitely generated group and $S$ a finite generating set for $H$, then the values of any $H$-cocycle $\gamma$ restricted to $S \times X$ define $\gamma$ completely. Furthermore, whenever $G$ is countable, by continuity of $\gamma$ and compactness of $S \times X$ we have that $\gamma$ must be uniformly bounded on $S \times X$ and thus $\gamma(S \times X) \Subset G$. Hence if $X$ is a $G$-subshift, there exists a finite set $F\Subset G$ such that $\gamma$ restricted to $S \times X$ is completely defined by a finite map $\tilde{\gamma} \colon S \times L_{F}(X)$.
\end{remark}

The following notion is strongly motivated by the work of Jeandel~\cite{Jeandel2015}.

\begin{definition}
	Let $G,H$ be two countable groups. Given a left action $G\curvearrowright X$ and an $H$-cocycle $\gamma \colon H \times X \to G$ we say the pair $(X,\gamma)$ is a \define{$G$-chart} of $H$. Furthermore, if for each $x \in X$ the action $H \overset{x}{\curvearrowright} G$ is free, we say that $(X,\gamma)$ is a \define{free $G$-chart} of $H$.
\end{definition}

\begin{example}\label{example_obviouschart}
	The trivial system $G \curvearrowright \{0\}$ consisting of a single point and the cocycle $\gamma\colon H \times \{0\} \to G$ which sends $(h,0) \mapsto h$ is a free $G$-chart of $H$ for any subgroup $H \leq G$.\hfill\exampleqed
\end{example}

\begin{example}\label{example_snake}
	Let $G = \ZZ^2$ and let $\Sigma_{\texttt{snake}}$ be the set of vector pairs given by
	\[\Sigma_{\texttt{snake}} = \{ (\ell,r) \in \{(1,0),(-1,0),(0,1),(0,-1)\}^2  \mid \ell \neq r   \} \]
	Visually, we may represent $\Sigma_{\texttt{snake}}$ by the set of square unit tiles shown on~\Cref{fig:tiles_Z}. The first vector is represented by the tail of the arrow and the second vector by the outgoing arrow.
	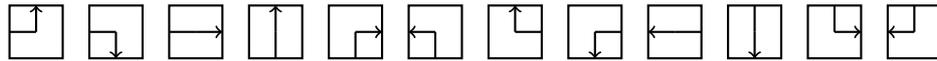
\begin{figure}[h!]
		\centering
		\begin{tikzpicture}[scale=0.7]

\def \outN {
	\draw[thick, ->] (0.5,0.5) -- (0.5,1);
}
\def \outS {
	\draw[thick, ->] (0.5,0.5) -- (0.5,0);
}
\def \outW {
	\draw[thick, ->] (0.5,0.5) -- (0,0.5);
}
\def \outE {
	\draw[thick, ->] (0.5,0.5) -- (1,0.5);
}
\def \inN {
	\draw[thick] (0.5,1) -- (0.5,0.5);
}
\def \inS {
	\draw[thick] (0.5,0) -- (0.5,0.5);
}
\def \inW {
	\draw[thick] (0,0.5) -- (0.5,0.5);
}
\def \inE {
	\draw[thick] (1,0.5) -- (0.5,0.5);
}
\def \paintgreen{
	\filldraw[green!20!white] (0,0) rectangle (1,1);
}
\def \paintred{
	\filldraw[red!20!white] (0,0) rectangle (1,1);
}
\def \cuadri{
	\draw [thick] (0,0) rectangle (1,1);
}

\begin{scope}[rotate = 0]

\begin{scope}[shift = {(0,0)}]
\cuadri \inW \outN 
\end{scope}

\begin{scope}[shift = {(1.5,0)}]
\cuadri \inW \outS 
\end{scope}

\begin{scope}[shift = {(3,0)}]
\cuadri \inW \outE
\end{scope}

\begin{scope}[shift = {(4.5,0)}]
\cuadri \inS \outN 
\end{scope}

\begin{scope}[shift = {(6,0)}]
\cuadri \inS \outE 
\end{scope}

\begin{scope}[shift = {(7.5,0)}]
\cuadri \inS \outW
\end{scope}

\end{scope}

\begin{scope}[rotate = 0, shift = {(9,0)}]

\begin{scope}[shift = {(0,0)}]
\cuadri \inE \outN 
\end{scope}

\begin{scope}[shift = {(1.5,0)}]
\cuadri \inE \outS 
\end{scope}

\begin{scope}[shift = {(3,0)}]
\cuadri \inE \outW
\end{scope}

\begin{scope}[shift = {(4.5,0)}]
\cuadri \inN \outS 
\end{scope}

\begin{scope}[shift = {(6,0)}]
\cuadri \inN \outE 
\end{scope}

\begin{scope}[shift = {(7.5,0)}]
\cuadri \inN \outW
\end{scope}

\end{scope}
\end{tikzpicture}
		\caption{The alphabet $\Sigma_{\texttt{snake}}$.}
		\label{fig:tiles_Z}
	\end{figure}
	
	For $a=(\ell,r) \in \Sigma_{\texttt{snake}}$ let $L(a)=\ell$ and $R(a)=r$. We define the \define{snake shift} as the $\ZZ^2$-SFT  $X_{\texttt{snake}} \subset (\Sigma_{\texttt{snake}})^{\ZZ^2}$ of all configurations $x$ such that for every position $v \in \ZZ^2$, we have $R(x(v)) = L(x(v + R(x(v))))$ and $L(x(v)) = R(x(v + L(x(v))))$. Visually, these are the configurations such that every outgoing arrow matches with an incoming arrow. Let $\gamma_{\texttt{snake}}\colon \ZZ \times X \to \ZZ^2$ be the $\ZZ$-cocycle defined by $\gamma_{\texttt{snake}}(1,x) = R(x((0,0)))$ and $\gamma_{\texttt{snake}}(-1,x) = L(x((0,0)))$. It can be verified that $(X_{\texttt{snake}},\gamma_{\texttt{snake}})$ is a $\ZZ^2$-chart of $\ZZ$.
	
	The $\ZZ^2$-chart $(X_{\texttt{snake}},\gamma_{\texttt{snake}})$ of $\ZZ$ is not free. Indeed, every configuration $x$ in which a cycle appears induces an action $\ZZ \overset{x}{\curvearrowright} \ZZ^2$ which is not free. Let $X^{\texttt{free}}_{\texttt{snake}} \subset X_{\texttt{snake}}$ be the \define{free snake} subshift consisting of all configurations $x \in X_{\texttt{snake}}$ such that no cycles appear, it can be verified that $(X^{\texttt{free}}_{\texttt{snake}}, \gamma_{\texttt{snake}}|_{\ZZ \times X^{\texttt{free}}_{\texttt{snake}}})$ is a free $\ZZ^2$-chart of $\ZZ$. See~\Cref{fig:tiles_Z_example}.\hfill\exampleqed\end{example}

\begin{figure}[h!]
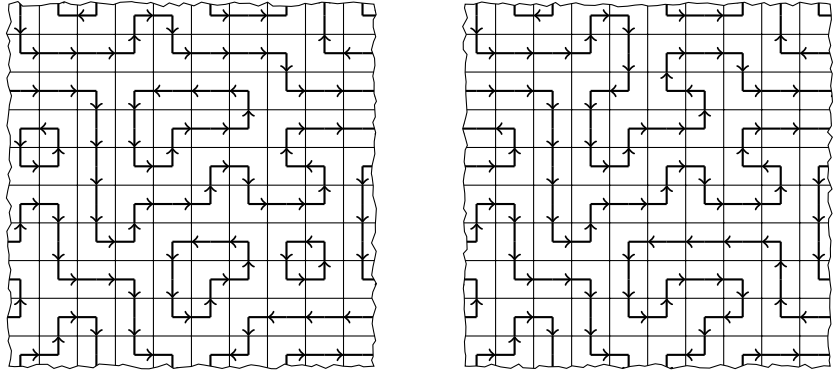

	\centering
	\include{tiles_Z_example}
	\caption{On the left we see a local patch of $X_{\texttt{snake}}$. The value of the $\ZZ$-cocycle $\gamma_{\texttt{snake}}(n,x)$ corresponds to the vector of $\ZZ^2$ obtained by following the arrow at the origin $n$ times. On the right we see a local patch of a configuration of $X^{\texttt{free}}_{\texttt{snake}}$. As cycles are forbidden, the cocycle induces a free action.}
	\label{fig:tiles_Z_example}
\end{figure}

Let $\Sigma$ be a set. The notion of $G$-chart gives canonical way to recover an $H$-orbit of $\Sigma$ given a $G$-orbit $y \in \Sigma^G$ and basepoints $x \in X$ and $g \in G$. Indeed, if $(X,\gamma)$ is a $G$-chart of $H$ we can associate to every $y \in \Sigma^G$ an orbit  $\pi_{x,g}(y) \in \Sigma^H$ by setting \[  \pi_{x,g}(y)(h) \isdef y(h \cdot_x g) = y(\gamma(h,gx)g) \mbox{ for every $h$ in $H$.}\]

Moreover, this configuration satisfies that for every $h_1,h_2 \in H$:
\begin{align*}
(h_2\pi_{x,g}(y))(h_1) & = \pi_{x,g}(y)(h_1h_2) \\
& = y((h_1h_2) \cdot_x g)\\
& = y(h_1\cdot_x (h_2 \cdot_x g))\\
& = (\pi_{x,h_2 \cdot_x g}(y))(h_1)
\end{align*}In other words, the left shift action of $h_2$ on $\pi_{x,g}(y)$ is the same as $\pi_{x,h_2 \cdot_x g}(y)$, that is, the configuration obtained by changing the basepoint $g$ by $h_2 \cdot_x g$.

From now on, we shall only consider $G$-charts $(X,\gamma)$ where $X$ is a $G$-subshift.

\begin{definition}
	Let $(X,\gamma)$ be a $G$-chart of $H$ and $Y\subset \Sigma^H$ be an $H$-subshift. The \define{$(X,\gamma)$-embedding of $Y$} is the $G$-subshift $Y_{\gamma}[X] \subset \Sigma^G \times X$ which has the property that $(y,x) \in Y_{\gamma}[X]$ if and only if for every $g \in G$ then $\pi_{x,g}(y)$ is in $Y$.
\end{definition}

In simpler words, $Y_{\gamma}[X]$ is the subshift of all pairs $(y,x)$ where $x \in X$ and every copy of $H$ induced by the action $H \overset{x}{\curvearrowright} G$ is decorated independently with a configuration from $Y$.

\newcommand{\alfTriang}{\begin{tikzpicture}[scale = 0.5]
	\draw[fill = black!5] (0.2,0.2) -- (0.8,0.2) -- (0.5,0.8) -- cycle;
	\end{tikzpicture}}
\newcommand{\alfCircle}{\begin{tikzpicture}[scale = 0.5]
	\draw[fill = black!5] (0.5,0.5) circle (0.3);
	\end{tikzpicture}}
\newcommand{\alfSquare}{\begin{tikzpicture}[scale = 0.5]
	\draw[fill = black!5] (0.2,0.2) rectangle (0.8,0.8);
	\end{tikzpicture}}

\begin{example}
	Consider the free $\ZZ^2$-chart $(X,\gamma)$ of $\ZZ$ from~\Cref{example_snake}, that is, $X = X^{\texttt{free}}_{\texttt{snake}}$ and $\gamma = \gamma_{\texttt{snake}}|_{\ZZ \times X^{\texttt{free}}_{\texttt{snake}}}$. Consider the $\ZZ$-subshift $Y$ consisting on the orbit of the sequence $x$ over the alphabet $\Sigma = \{\alfSquare, \alfTriang, \alfCircle \}$ given by \[
	x(n) = \begin{cases}
	\alfSquare & \mbox{ if } n =0 \bmod{3}\\
	\alfTriang & \mbox{ if } n =1 \bmod{3}\\
	\alfCircle & \mbox{ if } n =2 \bmod{3}.
	\end{cases}
	\]
	
	\begin{figure}[h!]
		\centering
		\begin{tikzpicture}[scale=0.6]
		
		\def \outN {
			\draw[thick, ->] (0.5,0.5) -- (0.5,1);
		}
		\def \outS {
			\draw[thick, ->] (0.5,0.5) -- (0.5,0);
		}
		\def \outW {
			\draw[thick, ->] (0.5,0.5) -- (0,0.5);
		}
		\def \outE {
			\draw[thick, ->] (0.5,0.5) -- (1,0.5);
		}
		\def \inN {
			\draw[thick] (0.5,1) -- (0.5,0.5);
		}
		\def \inS {
			\draw[thick] (0.5,0) -- (0.5,0.5);
		}
		\def \inW {
			\draw[thick] (0,0.5) -- (0.5,0.5);
		}
		\def \inE {
			\draw[thick] (1,0.5) -- (0.5,0.5);
		}
		\def \pG {
			\draw[fill = black!5] (0.2,0.2) rectangle (0.8,0.8);
		}
		\def \pR {
			\draw[fill = black!5] (0.5,0.5) circle (0.3);
		}
		\def \pB {
			\draw[fill = black!5] (0.2,0.2) -- (0.8,0.2) -- (0.5,0.8) -- cycle;
		}
		
		\clip[draw,decorate,decoration={random steps, segment length=3pt, amplitude=1pt}] (0.2,0.2) rectangle (9.8,9.8); 
		\begin{scope}[shift = {(9,9)}] \pG \inN \outE \end{scope}
		\begin{scope}[shift = {(0,0)}] \pR \inS \outE \end{scope}
		\begin{scope}[shift = {(1,0)}] \pG \inW \outN \end{scope}
		\begin{scope}[shift = {(1,1)}] \pB \inS \outE \end{scope}
		\begin{scope}[shift = {(2,1)}] \pR \inW \outS \end{scope}
		\begin{scope}[shift = {(2,0)}] \pG \inN \outS \end{scope}
		\begin{scope}[shift = {(0,1)}] \pG \inW \outN \end{scope}
		\begin{scope}[shift = {(0,2)}] \pB \inS \outW \end{scope}
		\begin{scope}[shift = {(0,3)}] \pB  \inW \outN \end{scope}
		\begin{scope}[shift = {(0,4)}] \pR \inS \outE \end{scope}
		\begin{scope}[shift = {(1,4)}] \pG \inW \outS \end{scope}
		\begin{scope}[shift = {(1,3)}] \pB  \inN \outS \end{scope}
		\begin{scope}[shift = {(1,2)}] \pR \inN \outE \end{scope}
		\begin{scope}[shift = {(2,2)}] \pG \inW \outE \end{scope}
		\begin{scope}[shift = {(3,2)}] \pB \inW \outS \end{scope}
		\begin{scope}[shift = {(3,1)}] \pR \inN \outS \end{scope}
		\begin{scope}[shift = {(3,0)}] \pG \inN \outE \end{scope}
		\begin{scope}[shift = {(4,0)}] \pB \inW \outS \end{scope}
		\begin{scope}[shift = {(0,5)}] \pR \inW \outE \end{scope}
		\begin{scope}[shift = {(1,5)}] \pG \inW \outN \end{scope}
		\begin{scope}[shift = {(1,6)}] \pB \inS \outW \end{scope}
		\begin{scope}[shift = {(0,6)}] \pR \inE \outW \end{scope}
		\begin{scope}[shift = {(0,7)}] \inW \outE \pR \end{scope}
		\begin{scope}[shift = {(1,7)}] \pG \inW \outE \end{scope}
		\begin{scope}[shift = {(2,7)}] \pB \inW \outS \end{scope}
		\begin{scope}[shift = {(2,6)}] \pR \inN \outS \end{scope}
		\begin{scope}[shift = {(2,5)}] \pG \inN \outS \end{scope}
		\begin{scope}[shift = {(2,4)}] \pB \inN \outS \end{scope}
		\begin{scope}[shift = {(2,3)}] \pR \inN \outE \end{scope}
		\begin{scope}[shift = {(3,3)}] \pG \inW \outN \end{scope}
		\begin{scope}[shift = {(3,4)}] \pB \inS \outE \end{scope}
		\begin{scope}[shift = {(4,4)}] \pR \inW \outE \end{scope}
		\begin{scope}[shift = {(5,4)}] \pG \inW \outN \end{scope}
		\begin{scope}[shift = {(5,5)}] \pB \inS \outE \end{scope}
		\begin{scope}[shift = {(6,5)}] \pR \inW \outS \end{scope}
		\begin{scope}[shift = {(6,4)}] \pG \inN \outE \end{scope}
		\begin{scope}[shift = {(7,4)}] \pB \inW \outE \end{scope}
		\begin{scope}[shift = {(8,4)}] \pR \inW \outN \end{scope}
		\begin{scope}[shift = {(8,5)}] \pG \inS \outW \end{scope}
		\begin{scope}[shift = {(7,5)}] \pB \inE \outN \end{scope}
		\begin{scope}[shift = {(7,6)}] \pR \inS \outE \end{scope}
		\begin{scope}[shift = {(8,6)}] \pG \inW \outE \end{scope}
		\begin{scope}[shift = {(9,6)}] \pB \inW \outE \end{scope}
		\begin{scope}[shift = {(9,5)}] \pB \inE \outS \end{scope}
		\begin{scope}[shift = {(9,4)}] \pR \inN \outS \end{scope}
		\begin{scope}[shift = {(9,3)}] \pG \inN \outS \end{scope}
		\begin{scope}[shift = {(9,2)}] \pB \inN \outE \end{scope}
		\begin{scope}[shift = {(9,1)}] \pB \inE \outW \end{scope}
		\begin{scope}[shift = {(8,1)}] \pR \inE \outN \end{scope}
		\begin{scope}[shift = {(8,2)}] \pG \inS \outN \end{scope}
		\begin{scope}[shift = {(8,3)}] \pB \inS \outW \end{scope}
		\begin{scope}[shift = {(7,3)}] \pR \inE \outW \end{scope}
		\begin{scope}[shift = {(6,3)}] \pG \inE \outW \end{scope}
		\begin{scope}[shift = {(5,3)}] \pB \inE \outW \end{scope}
		\begin{scope}[shift = {(4,3)}] \pR \inE \outS \end{scope}
		\begin{scope}[shift = {(4,2)}] \pG \inN \outS \end{scope}
		\begin{scope}[shift = {(4,1)}] \pB \inN \outE \end{scope}
		\begin{scope}[shift = {(5,1)}] \pR \inW \outN \end{scope}
		\begin{scope}[shift = {(5,2)}] \pG \inS \outE \end{scope}
		\begin{scope}[shift = {(6,2)}] \pB \inW \outE \end{scope}
		\begin{scope}[shift = {(7,2)}] \pR \inW \outS \end{scope}
		\begin{scope}[shift = {(7,1)}] \pG \inN \outW \end{scope}
		\begin{scope}[shift = {(6,1)}] \pB \inE \outS \end{scope}
		\begin{scope}[shift = {(6,0)}] \pR \inN \outW \end{scope}
		\begin{scope}[shift = {(5,0)}] \pG \inE \outS \end{scope}
		\begin{scope}[shift = {(7,0)}] \pG \inS \outE \end{scope}
		\begin{scope}[shift = {(8,0)}] \pB \inW \outE \end{scope}
		\begin{scope}[shift = {(9,0)}] \pR \inW \outE \end{scope}
		\begin{scope}[shift = {(0,9)}] \pG \inN \outS \end{scope}
		\begin{scope}[shift = {(0,8)}] \pB \inN \outE \end{scope}
		\begin{scope}[shift = {(1,8)}] \pR \inW \outE \end{scope}
		\begin{scope}[shift = {(2,8)}] \pG \inW \outE \end{scope}
		\begin{scope}[shift = {(3,8)}] \pB \inW \outN \end{scope}
		\begin{scope}[shift = {(3,9)}] \pR \inS \outE \end{scope}
		\begin{scope}[shift = {(4,9)}] \pG \inW \outS \end{scope}
		\begin{scope}[shift = {(4,8)}] \pB \inN \outS \end{scope}
		\begin{scope}[shift = {(4,7)}] \pR \inN \outW \end{scope}
		\begin{scope}[shift = {(3,7)}] \pG \inE \outS \end{scope}
		\begin{scope}[shift = {(3,6)}] \pB \inN \outS \end{scope}
		\begin{scope}[shift = {(3,5)}] \pR \inN \outE \end{scope}
		\begin{scope}[shift = {(4,5)}] \pG \inW \outN \end{scope}
		\begin{scope}[shift = {(4,6)}] \pB \inS \outE \end{scope}
		\begin{scope}[shift = {(5,6)}] \pR \inW \outE \end{scope}
		\begin{scope}[shift = {(6,6)}] \pG \inW \outN \end{scope}
		\begin{scope}[shift = {(6,7)}] \pB \inS \outW \end{scope}
		\begin{scope}[shift = {(5,7)}] \pR \inE \outN \end{scope}
		\begin{scope}[shift = {(5,8)}] \pG \inS \outE \end{scope}
		\begin{scope}[shift = {(6,8)}] \pB \inW \outE \end{scope}
		\begin{scope}[shift = {(7,8)}] \pR \inW \outS \end{scope}
		\begin{scope}[shift = {(7,7)}] \pG \inN \outE \end{scope}
		\begin{scope}[shift = {(8,7)}] \pB \inW \outE \end{scope}
		\begin{scope}[shift = {(9,7)}] \pR \inW \outE \end{scope}
		\begin{scope}[shift = {(2,9)}] \pR \inN \outW \end{scope}
		\begin{scope}[shift = {(1,9)}] \pG \inE \outN \end{scope}
		\begin{scope}[shift = {(9,8)}] \pR \inE \outW \end{scope}
		\begin{scope}[shift = {(8,8)}] \pG \inE \outN \end{scope}
		\begin{scope}[shift = {(8,9)}] \pB \inS \outN \end{scope}
		\begin{scope}[shift = {(5,9)}] \pR \inN \outE \end{scope}
		\begin{scope}[shift = {(6,9)}] \pG \inW \outE \end{scope}
		\begin{scope}[shift = {(7,9)}] \pB \inW \outN \end{scope}
		\draw (0,0) grid (10,10);
		\end{tikzpicture}
		\caption{The subshift $Y_{\gamma}[X]$ is obtained by ``overlaying'' the copies of $H$ induced by $\gamma$ on $X$ with configurations of $Y$.}
		\label{fig:tiles_Z_examplecolor}
	\end{figure}
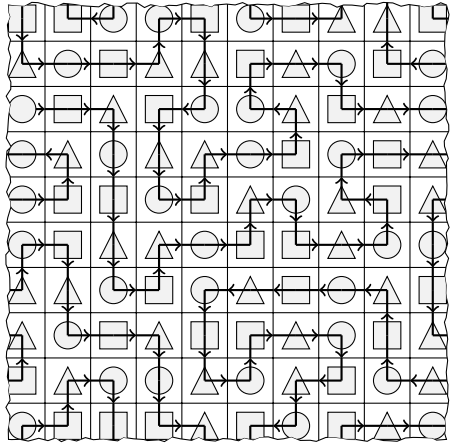
	
	The subshift $Y_{\gamma}[X]$ is the set of all configurations $(y,x) \in \{\alfSquare, \alfTriang, \alfCircle \}^{\ZZ^2} \times X$ such that every path in $x \in X$ induced by $\gamma$ is decorated independently with a configuration from $Y$, see~\Cref{fig:tiles_Z_examplecolor}. \hfill\exampleqed
\end{example}

\begin{remark}\label{remark_SFTchart_SFT_is_SFT}
	Let $(X,\gamma)$ be a $G$-chart of $H$. If $X$ is a $G$-SFT and $Y$ is an $H$-SFT then $Y_{\gamma}[X]$ is also a $G$-SFT.
\end{remark}

\begin{remark}\label{remark_chart_having_one_free_action_is_good}
	If $(X,\gamma)$ is a $G$-chart of $H$ and there is $x \in X$ such that $H \overset{x}{\curvearrowright} G$ is free, then the map $\pi\colon Y_{\gamma}[X] \to Y$ given by $\pi(y,x) = \pi_{x,1_G}(y)$ is surjective. In particular, $Y_{\gamma}[X]$ is non-empty if and only if $Y$ is non-empty.
\end{remark}

The following result is the main tool that will allow us to take a subshift of finite type with fixed topological entropy defined on a group $H$, and realize it, modulo a fixed constant, as the topological entropy of a subshift of finite type defined on any group where $H$ can be freely charted . It shows that the entropy of any subshift which is embedded in a free chart can be expressed through an addition formula.

\begin{theorem}[addition formula]\label{theorem_addition_formula}
	Let $G,H$ be countable amenable groups. For any free $G$-chart $(X,\gamma)$ of $H$ and for any $H$-subshift $Y$ we have
	\begin{equation} \htop(G \curvearrowright Y_{\gamma}[X]) = \htop(H \curvearrowright Y)+ \htop(G \curvearrowright X).\end{equation}
\end{theorem}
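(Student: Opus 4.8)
The plan is to prove both inequalities between $\htop(G\curvearrowright Y_{\gamma}[X])$ and $\htop(H\curvearrowright Y)+\htop(G\curvearrowright X)$ by estimating the growth of the languages $L_{F_n}(Y_{\gamma}[X])$ along a F\o lner sequence $\{F_n\}$ of $G$, using the expression of entropy as a language growth rate and the infimum/limit formulas recalled around \eqref{eq_entropyforidiots}. The structural backbone is the orbit factorization of the fibers of the projection $\rho\colon Y_{\gamma}[X]\to X$, $(y,x)\mapsto x$. Fixing $x\in X$, the group $G$ is partitioned into the orbits of the free action $H\overset{x}{\curvearrowright}G$, and $(y,x)\in Y_{\gamma}[X]$ holds if and only if, on every orbit $O$, the pullback of $y|_{O}$ to $H$ (via any bijection $h\mapsto h\cdot_x g_O$) lies in $Y$. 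Hence, once $x$ is fixed, the admissible restrictions $y|_{F_n}$ are chosen independently orbit by orbit, and their number is exactly $\prod_{O}\abs{L_{A_O}(Y)}$, where $A_O\isdef\{h\in H : h\cdot_x g_O\in F_n\}$ is the pullback of $O\cap F_n$; this count is basepoint-independent because $Y$ is shift-invariant, and $\sum_{O}\abs{A_O}=\abs{F_n}$ since the action is free.

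For the lower bound, fix any $x$ (the chart being free, all $x$ work). The infimum formula \eqref{eq_entropyforidiots} for $Y$ gives $\abs{L_A(Y)}\ge e^{\abs{A}\htop(H\curvearrowright Y)}$ for every finite $A\subseteq H$, so the fiber count satisfies $\prod_{O}\abs{L_{A_O}(Y)}\ge e^{\abs{F_n}\htop(H\curvearrowright Y)}$. For each $p\in L_{F_n}(X)$ I would fix one extension $x_p\in X$; its $\prod_{O}\abs{L_{A_O(x_p)}(Y)}$ admissible patterns $y|_{F_n}$ produce that many distinct elements $(y|_{F_n},p)$ of $L_{F_n}(Y_{\gamma}[X])$, and distinct $p$ give disjoint families. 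Thus $\abs{L_{F_n}(Y_{\gamma}[X])}\ge\abs{L_{F_n}(X)}\,e^{\abs{F_n}\htop(H\curvearrowright Y)}$, and dividing by $\abs{F_n}$ and letting $n\to\infty$ yields the inequality $\ge$.

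The upper bound is the crux, and here two things must be controlled: an irregular orbit piece $A_O$ can have $\log\abs{L_{A_O}(Y)}$ as large as $\abs{A_O}\log\abs{\Sigma}$ rather than $\approx\abs{A_O}\htop(H\curvearrowright Y)$, and the full orbit partition of $F_n$ depends on $x$ far outside $F_n$. My plan is to overcount using only local connections. Given $\varepsilon>0$, the limit form of the entropy of $Y$ provides a finite symmetric $K\ni 1_H$ and $\delta>0$ with $\log\abs{L_A(Y)}\le\abs{A}(\htop(H\curvearrowright Y)+\varepsilon)$ for every $(K,\delta)$-invariant $A$. Partition $F_n$ into the connected components (``$K$-clusters'') of the relation $g\sim k\cdot_x g$, $k\in K$; for each cluster $C$ let $B_C\subseteq H$ be its pullback. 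Since each $\gamma(k,\cdot)$ is continuous on the compact $X$ into the discrete $G$, the set $D\isdef\bigcup_{k\in K}\gamma(\{k\}\times X)$ is finite and fixed, so the cluster partition of $F_n$ depends only on $x|_{F_nD}$; moreover each cluster is contained in a single orbit, whence any admissible $q=y|_{F_n}$ satisfies $q|_{C}\in L_{B_C}(Y)$ for all $C$. This gives the overcount $\abs{L_{F_n}(Y_{\gamma}[X])}\le\sum_{p'\in L_{F_nD}(X)}\prod_{C}\abs{L_{B_C(p')}(Y)}$.

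It then remains to bound the fiber product uniformly. Here I would prove the boundary estimate: $g\in F_n$ fails to be $K$-interior to its cluster only if $Dg\not\subseteq F_n$, and $\abs{\{g\in F_n : Dg\not\subseteq F_n\}}\le\sum_{d\in D}\abs{dF_n\triangle F_n}=o(\abs{F_n})$ by the F\o lner property, $D$ being fixed. A cluster that is not $(K,\delta)$-invariant contains a proportion at least $\delta/\abs{K}$ of non-$K$-interior points, so such ``bad'' clusters cover only $o(\abs{F_n})$ elements; splitting each $B_C$ into interior and boundary via subadditivity of $A\mapsto\log\abs{L_A(Y)}$ yields $\sum_{C}\log\abs{L_{B_C}(Y)}\le\abs{F_n}(\htop(H\curvearrowright Y)+\varepsilon)+o(\abs{F_n})$, uniformly in $p'$. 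Since $F_nD$ is F\o lner with $\abs{F_nD}/\abs{F_n}\to1$, we get $\frac{1}{\abs{F_n}}\log\abs{L_{F_n}(Y_{\gamma}[X])}\le\frac{\abs{F_nD}}{\abs{F_n}}\cdot\frac{1}{\abs{F_nD}}\log\abs{L_{F_nD}(X)}+\htop(H\curvearrowright Y)+\varepsilon+o(1)$, which tends to $\htop(G\curvearrowright X)+\htop(H\curvearrowright Y)+\varepsilon$; letting $\varepsilon\to0$ finishes. I expect the design of this localized overcount, together with the boundary estimate that charges the base only $(1+o(1))\abs{F_n}$ worth of $X$-information instead of summing over exponentially many orbit configurations, to be the main difficulty, and the point where the local boundedness of the cocycle (the finiteness of each $\gamma(\{k\}\times X)$) is indispensable.
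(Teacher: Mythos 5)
Your proposal is correct and follows essentially the same route as the paper's proof: the same lower bound via the infimum formula \eqref{eq_entropyforidiots} and fiberwise independence over orbits, and for the upper bound the same decomposition of a large finite set into cocycle-induced clusters, the same dichotomy between $(K,\delta)$-invariant (good) clusters and bad clusters controlled by a boundary estimate, the same use of freeness to identify clusters with subsets of $H$ of equal size, and the same overcounting sum over $X$-patterns on an enlarged window. The differences are purely presentational (you run the estimate along a F\o lner sequence with $o(|F_n|)$ bookkeeping, where the paper works with arbitrary $(S',\delta')$-invariant sets and the explicit constant $\delta'/\delta$), apart from trivially fixable slips such as writing $F_nD$ where the paper's left-invariance conventions require $DF_n$, and conflating the finite set of cocycle values with the finite continuity window.
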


\begin{proof}
	Denote by $\Sigma_X$ and $\Sigma_Y$ the alphabets of $X$ and $Y$ respectively. Let $\varepsilon >0$. There exists $S \Subset H$ and $\delta>0$ such that every non-empty $(S,\delta)$-invariant set $F \Subset H$ satisfies \begin{equation*}
	\ee^{\htop(H \curvearrowright Y)|F|} \leq |L_F(Y)| \leq \ee^{(\htop(H \curvearrowright Y)+\varepsilon)|F|}.
	\end{equation*}
	Let $\gamma\colon H \times X \to G$ be the $H$-cocycle associated to $X$. As $G$ is countable and $S$ is finite, the restriction of $\gamma$ to $S \times X$ is bounded. Let $W_1 \Subset G$ be a set such that $\gamma(S \times X) \subset W_1$. By continuity of $\gamma$ there exists $W_2 \Subset G$ such that a set such that for every $s \in S$ we have $\gamma(s,x) = \gamma(s,y)$ whenever $x|_{W_2} = y|_{W_2}$. For every $\varepsilon'>0$ there exists a finite set $S' \supset W_1 \cup W_2$ and $\delta'>0$ such that every non-empty $(S',\delta')$-invariant  $F' \Subset G$ satisfies \begin{equation*}
	\ee^{\htop(G \curvearrowright X)|F'|} \leq |L_{F'}(X)| \leq \ee^{(\htop(G \curvearrowright X)+\varepsilon')|F'|}.
	\end{equation*}
	
	Let $F' \Subset G$ be an $(S',\delta')$-invariant set and consider a pattern $p \in L_{S'F'}(X)$. As $W_2 \subset S'$, for each $f' \in F'$ and $s \in S$ the map $\gamma_{p}(s,f') \isdef \gamma(s,f'x)$ where $x\in X$ is any configuration such that $x|_{S'F'} = p$ is well defined. Let us define the relation $R \subset F' \times F'$ as the smallest equivalence relation such that whenever $f_1',f_2' \in F'$ satisfy that for some $s_1,s_2 \in S$ we have $\gamma_{p}(s_1,f_1')f_1'=\gamma_{p}(s_2,f_2')f_2'$, then $(f_1',f_2')\in R$.

	 The equivalence relation $R$ induces a partition $F' = F^{p}_1 \uplus F^{p}_2 \uplus \dots \uplus F^{p}_{k(p)}$. Let us denote by $\partial_S F^{p}_i$ the set of all $g' \in S'F' \setminus F'$ for which there is $f' \in F^{p}_i$ and $s \in S$ such that $\gamma_{p}(s,f')f' = g'$. By definition of $R$, note that the sets $\partial_S F^{p}_i$ are pairwise disjoint and $\partial_S F^{p}_i \subset S'F' \setminus F'$.
	
	We obtain that $\sum_{i = 1}^{k(p)} |\partial_S F^{p}_i| \leq |S'F' \setminus F'|\leq \delta' |F'|$. Dividing both sides by $|F'|$ and multiplying each left term by $\frac{|F^{p}_i|}{|F^{p}_i|}$ we obtain: \begin{equation*}
	\sum_{i = 1}^{k(p)} \frac{|\partial_S F^{p}_i|}{|F^{p}_i|}\frac{|F^{p}_i|}{|F'|} \leq \delta'.
	\end{equation*}

	  Denote by $\mu_i$ the ratio $\mu_i = \frac{|F^{p}_i|}{|F'|}$ and by $\delta_i = \frac{|\partial_S F^{p}_i|}{|F^{p}_i|}$. Note that $\mu_i \in [0,1]$, $\sum_{i = 1}^{k(p)}\mu_i = 1$ and $\delta_i \in [0,|S|]$. Let $I(p)$ be the set of indices such that $\delta_i \leq \delta$. We have that $\sum_{i \in I(p)}\delta_i \mu_i + \sum_{j \notin I(p)}\delta_j \mu_j \leq \delta'$. A simple manipulation of this expression yields \begin{equation}\label{equation_good_proportion}
	  \sum_{i \in I(p)}\mu_i \geq 1-\frac{\delta'}{\delta}.
	  \end{equation}
	  
	  The intuitive meaning of~\Cref{equation_good_proportion} is that the total amount of sites in the $(S',\varepsilon')$-invariant set $F'$ which lie in an induced subset of $H$ which is $(S,\delta)$-invariant can be made arbitrarily large by tweaking the ratio $\frac{\delta'}{\delta}$. 

	  As the $G$-chart $(X,\gamma)$ of $H$ is free, we can identify each set $F_i^{p}$ with a subset $H^p_i \Subset H$ and $\partial_S F^{p}_i$ with $SH^p_i \setminus H^p_i$. Furthermore, we have $|H^p_i| = |F_i^{p}|$ and $|SH^p_i \setminus H^p_i| = |\partial_S F^p_i|$. In other words, whenever $|\partial_S F^p_i| \leq \delta |F^p_i|$, the set $H_i$ is $(S,\delta)$-invariant. Now we use this computation to estimate the size of $|L_{F'}(Y_{\gamma}[X])|$. Clearly $|L_{S'F'}(X)| \geq |L_{F'}(X)|$, we can thus obtain \begin{align*}
	  |L_{F'}(Y_{\gamma}[X])| &  \leq \sum_{p\in L_{S'F'}(X)}\prod_{i=1 }^{k(p)}|L_{H^p_i}(Y)| \\
	  & \leq \sum_{p\in L_{S'F'}(X)}\prod_{i\in I(p)}|L_{H^p_i}(Y)|\prod_{j\notin I(p)}|L_{H^p_j}(Y)|\\
	  & \leq \sum_{p\in L_{S'F'}(X)}\prod_{i\in I(p)}|L_{H^p_i}(Y)|\prod_{j\notin I(p)}|\Sigma_Y|^{|H^p_j|}\\
	  & \leq |\Sigma_Y|^{\frac{\delta'|F'|}{\delta}} \sum_{p\in L_{S'F'}(X)}\prod_{i\in I(p)}|L_{H^p_i}(Y)|.
	  \end{align*}
	  As each $H^p_i$ for $i \in I(p)$ is $(S,\delta)$-invariant, we get $|L_{H^p_i}(Y)| \leq \ee^{(\htop(H \curvearrowright Y)+\varepsilon)|H_i|}$ and thus \begin{align*}
	    |L_{F'}(Y_{\gamma}[X])| & \leq |\Sigma_Y|^{\frac{\delta'|F'|}{\delta}} \sum_{p\in L_{S'F'}(X)}\prod_{i\in I(p)}\ee^{(\htop(H \curvearrowright Y)+\varepsilon)|H_i|}\\
	    & \leq  |\Sigma_Y|^{\frac{\delta'|F'|}{\delta}} \sum_{p\in L_{S'F'}(X)}\ee^{(\htop(H \curvearrowright Y)+\varepsilon)|F'|(1-\frac{\delta'}{\delta})}\\
	    & \leq  |\Sigma_Y|^{\frac{\delta'|F'|}{\delta}} \ee^{(\htop(H \curvearrowright Y)+\varepsilon)|F'|} |L_{S'F'}(X)|.
	  \end{align*}
	  
	  Therefore we obtain that, \begin{align*}
	  \frac{1}{|F'|}\log(|L_{F'}(Y_{\gamma}[X])|) & \leq \frac{\delta'}{\delta}\log(|\Sigma_Y|) + (\htop(H \curvearrowright Y)+\varepsilon)+ \frac{1}{|F'|}\log(|L_{S'F'}(X)|)\\
	  & \leq \frac{\delta'}{\delta}\log(|\Sigma_Y|) + \htop(H \curvearrowright Y)+\varepsilon+ \frac{1}{|F'|} \left(\log(|L_{F'}(X)|)+ \log(|L_{S'F'\setminus F'}(X)|) \right) \\
	  & \leq \frac{\delta'}{\delta}\log(|\Sigma_Y|) + \htop(H \curvearrowright Y)+\varepsilon+ \frac{1}{|F'|}\log(|L_{F'}(X)|) + \frac{|S'F'\setminus F'|}{|F'|}\log(|\Sigma_X|).
	  \end{align*}
	  
	  As $F'$ is an $(S',\delta')$-invariant set, we get that $|S'F'\setminus F'| \leq \delta'|F'|$. Furthermore, by definition this also implies that $\log(|L_{F'}(X)| \leq (\htop(G \curvearrowright X) + \varepsilon')|F'|$, therefore for every $(S',\delta')$-invariant set $F'$ we have, \begin{align*}
	  \frac{1}{|F'|}\log(|L_{F'}(Y_{\gamma}[X])|)  \leq \htop(H \curvearrowright Y) + \htop(G \curvearrowright X) +\varepsilon+\varepsilon'+ \frac{\delta'}{\delta}\log(|\Sigma_Y|) + \delta'\log(|\Sigma_X|).
	  \end{align*}
	  
	  By the infimum formula for the entropy, the previous expression is an upper bound for the entropy $h_{\text{top}}(G \curvearrowright Y_{\gamma}[X])$. Now choose $\varepsilon = \varepsilon' = \frac{1}{n}$, this bounds the available values of $\delta$ and $\delta'$ above. We may arbitrarily choose $\delta' \leq \frac{\delta}{n}$. Letting $n$ go to infinity we obtain, \begin{equation} h_{\text{top}}(G \curvearrowright Y_{\gamma}[X]) \leq h_{\text{top}}(H \curvearrowright Y)+ h_{\text{top}}(G \curvearrowright X).\end{equation}
	 
	 For the lower bound,~\Cref{eq_entropyforidiots} shows that a lower bound for $L_{H^p_i}(Y)$ is given by $\ee^{\htop(H \curvearrowright Y)|H^p_i|}$. It is then not hard to see that for every $F' \Subset G$ we have,\begin{align}
	 |L_{F'}(Y_{\gamma}[X])| & \geq |L_{F'}(X)|\ee^{\htop(H \curvearrowright Y)|F'|}.
	 \end{align}
	 From where we obtain the other inequality.
\end{proof}

Recall that by~\Cref{remark_SFTchart_SFT_is_SFT} if both the subshift $X$ in a chart $(X,\gamma)$ and the embedded subshift $Y$ are SFTs, then $Y_{\gamma}[X]$ is also an SFT. This gives us a way of producing a new $G$-SFT those topological entropy is the sum of their entropies.

\begin{corollary}\label{corollary_realize_entropy}
	If $(X,\gamma)$ is a free $G$-chart of $H$ and $X$ is a $G$-SFT, then for every $H$-SFT $Y$ there exists a $G$-SFT $Z$ which has entropy $\htop(G \curvearrowright X)+\htop(H\curvearrowright Y)$. In other words,
	
	\[ \htop(G \curvearrowright X) + \entsft(H) \subset \entsft(G).\]
\end{corollary}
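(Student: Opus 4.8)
The plan is to combine the two results immediately preceding the statement, namely the addition formula (\Cref{theorem_addition_formula}) and the closure-under-finite-type remark (\Cref{remark_SFTchart_SFT_is_SFT}). Given an arbitrary $H$-SFT $Y$, I would set $Z \isdef Y_{\gamma}[X]$ and argue that this $Z$ is the desired $G$-SFT witnessing the claimed entropy value.

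The steps, in order, are as follows. First, I would verify that $Z$ is a $G$-SFT: since $(X,\gamma)$ is a $G$-chart with $X$ a $G$-SFT and $Y$ is an $H$-SFT by hypothesis, \Cref{remark_SFTchart_SFT_is_SFT} applies verbatim and yields that $Z = Y_{\gamma}[X]$ is a $G$-SFT. Second, I would compute its entropy: because the chart $(X,\gamma)$ is free and $Y$ is in particular an $H$-subshift, \Cref{theorem_addition_formula} gives
\[
\htop(G \curvearrowright Z) = \htop(H \curvearrowright Y) + \htop(G \curvearrowright X).
\]
Thus $\htop(G \curvearrowright X) + \htop(H \curvearrowright Y)$ is realized as the topological entropy of a $G$-SFT, so it belongs to $\entsft(G)$.

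Finally, I would quantify over $Y$: as $Y$ ranges over all $H$-SFTs, the value $\htop(H \curvearrowright Y)$ ranges over precisely $\entsft(H)$ by definition. Since for each such $Y$ the number $\htop(G \curvearrowright X) + \htop(H \curvearrowright Y)$ lies in $\entsft(G)$, translating $\entsft(H)$ by the fixed constant $\htop(G \curvearrowright X)$ lands inside $\entsft(G)$, which is exactly the displayed inclusion $\htop(G \curvearrowright X) + \entsft(H) \subset \entsft(G)$.

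I do not expect any genuine obstacle here: the corollary is a direct packaging of \Cref{theorem_addition_formula} and \Cref{remark_SFTchart_SFT_is_SFT}, with the only content being the observation that the additive constant $\htop(G \curvearrowright X)$ is fixed once the chart is fixed, while $\htop(H \curvearrowright Y)$ sweeps out the whole of $\entsft(H)$. The substantive work has already been done in establishing the addition formula (whose proof carefully controls the boundary contributions of the induced partition) and in verifying that the embedding preserves the finite-type property.
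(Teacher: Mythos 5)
Your proposal is correct and matches the paper's argument exactly: the paper presents this corollary as an immediate consequence of \Cref{theorem_addition_formula} and \Cref{remark_SFTchart_SFT_is_SFT}, taking $Z = Y_{\gamma}[X]$ just as you do. Nothing is missing; the quantification over $Y$ at the end is the right way to pass from the single-realization statement to the displayed inclusion.
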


In what follows we shall show that if there is at least one free $G$-chart $(X,\gamma)$ of $H$ where $X$ is a $G$-SFT, then it is always possible to find another such chart where $X$ can have arbitrarily low entropy.

\subsection{Reducing the entropy of a chart}\label{section_reduce_ent_charts}

The goal of this section is to develop a method for reducing the entropy of a subshift of finite type in such a way that the new subshift of finite type preserves any cocycle defined on the original one. In order to do this we will use the machinery of quasitilings developed by Ornstein and Weiss in~\cite{OrnWei1987}. In order to minimize the complexity of the proof, we shall in fact use a recent result by Downarowicz, Huczec and Zhang~\cite{DownarowiczHuczekZhang2019} which shows, for any countable amenable group, the existence of zero-entropy exact tilings where each tile can be made arbitrarily invariant.

The ideas presented in this section have been strongly influenced by the work of Frisch and Tamuz~\cite{FrischTamuz2015} which use similar methods to study generic properties of the set of all subshifts.

\begin{definition}
	Let $G$ be a group. A \define{tile set} is a finite collection $\mathcal{T} = \{T_1,\dots,T_n\}$ of finite subsets of $G$ which contain the identity.
	A \define{tiling} of $G$ by $\mathcal{T}$ is a function $\tau \colon G \to  \mathcal{T} \cup \{\varnothing\}$ such that:
	\begin{enumerate}
		\item ($\tau$ is pairwise-disjoint) For every $g,h \in G$, if $g \neq h$ then $\tau(g)g \cap \tau (h)h =\varnothing$.
		\item ($\tau$ covers $G$) For every $g \in G$ there exists $h \in G$ such that $g \in \tau(h)h$.
	\end{enumerate}
\end{definition}

\begin{lemma}\label{lemma_tilings_are_SFTS}
	Let $\mathcal{T}$ be a tileset. The collection of all tilings of $G$ by $\mathcal{T}$ is a $G$-SFT.
\end{lemma}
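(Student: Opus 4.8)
The plan is to exhibit a \emph{finite} set of forbidden patterns $\FF$ such that the collection of all tilings of $G$ by $\mathcal{T}$ equals $X_{\FF}$. Since the alphabet $\Sigma \isdef \mathcal{T} \cup \{\varnothing\}$ is finite (as $\mathcal{T}$ is a finite collection), the set of tilings naturally sits inside the full shift $\Sigma^G$, and any set of the form $X_{\FF}$ is automatically closed and $G$-invariant; hence once $\FF$ is shown to be finite, we will have shown that the collection of tilings is a $G$-SFT without needing to verify shift-invariance separately. The guiding observation is that both defining conditions are \emph{local}, and the relevant radius is controlled by the finite set $M \isdef \bigcup_{T \in \mathcal{T}} T \Subset G$, which contains $1_G$ and bounds the support of every tile.

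First I would encode the disjointness condition~(1). The key point is that an overlap can only occur between nearby tiles: if $\tau(g)g \cap \tau(h)h \neq \varnothing$ with $\tau(g),\tau(h)\neq\varnothing$, then $tg = t'h$ for some $t \in \tau(g) \subseteq M$ and $t' \in \tau(h) \subseteq M$, forcing $hg^{-1} = t'^{-1}t \in M^{-1}M$. I would therefore define, for each $r \in M^{-1}M \setminus \{1_G\}$ and each pair $T,T' \in \mathcal{T}$ with $T \cap T'r \neq \varnothing$, the pattern $p$ on support $\{1_G, r\}$ with $p(1_G) = T$ and $p(r) = T'$, and place all such $p$ into $\FF$. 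A short computation with the convention $gx(h)=x(hg)$ shows that a configuration contains a translate of one of these patterns precisely when two distinct tiles overlap, so excluding these patterns is exactly condition~(1). There are finitely many such patterns since $M^{-1}M$ and $\mathcal{T}$ are finite.

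Next I would encode the covering condition~(2). Again the relevant window is bounded: a point is covered by the tile at $h$ exactly when $1_G \in \tau(h)h$ after translating to the origin, i.e.\ $h^{-1} \in \tau(h) \subseteq M$, which forces $h \in M^{-1}$. Thus whether the origin is covered depends only on $\tau|_{M^{-1}}$. I would then add to $\FF$ every pattern $p \colon M^{-1} \to \Sigma$ such that $h^{-1} \notin p(h)$ for all $h \in M^{-1}$ (with the convention that $\varnothing$ contains no element); these are precisely the ``origin-uncovered'' local configurations. Forbidding all their translates enforces that every $g \in G$ is covered, which is condition~(2), and there are finitely many since $M^{-1}$ is finite and $\Sigma$ is finite.

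Finally I would check both inclusions $\mathrm{Til}(\mathcal{T}) = X_{\FF}$: a tiling avoids every pattern in $\FF$ by conditions~(1)–(2), and conversely a configuration avoiding $\FF$ satisfies both conditions by the two equivalences above, together with the remark that overlaps and coverings outside the ranges $M^{-1}M$ and $M^{-1}$ cannot occur at all. Since $\FF$ is finite, $X_{\FF}$ is a $G$-SFT, completing the proof. I expect the only real source of friction to be bookkeeping around the left/right conventions: the tiles occupy the \emph{right} translates $\tau(g)g$ while the shift acts by $gx(h)=x(hg)$, so one must track carefully how translating a base pattern on support $F$ yields a cylinder on the right translate $Fg$; this is routine but is where an error would most likely creep in.
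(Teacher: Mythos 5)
Your proposal is correct and follows essentially the same route as the paper's proof: a finite family of two-point forbidden patterns on supports $\{1_G,r\}$ with $r$ ranging over $M^{-1}M$ to enforce disjointness (the paper writes $g = t_2^{-1}t_1$, the same set), plus a finite family on support $M^{-1} = \bigcup_{i\le n} T_i^{-1}$ to enforce covering, with your condition $h^{-1}\notin p(h)$ equivalent to the paper's $1_G \notin q(g)g$. The translation bookkeeping you flag as the risky step is exactly the computation the paper carries out explicitly (via $\tau' = h_1\tau$ and $\tau' = g\tau$), and your outline of it is accurate.
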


\begin{proof}
	Let $X_{\mathcal{T}} \subset (\mathcal{T}\cup \{\varnothing\})^G$ be the set of all configurations $\tau \colon G \to  \mathcal{T} \cup \{\varnothing\}$ which avoid the set of forbidden patterns $\mathcal{D} \cup \mathcal{C}$ where
	\begin{enumerate}
		\item $\mathcal{D}$ is the set of all patterns $p$ with support $\{1_G,g\}$ where $g = t_2^{-1}t_1 \neq 1_G$ for some $t_1, t_2 \in \bigcup_{i \leq n}T_i$ and which satisfy that $p(1_G) \cap p(g)g \neq \varnothing$.
		\item $\mathcal{C}$ consists of all patterns $q$ with support $\bigcup_{i \leq n}T_i^{-1}$ such that $1_G \notin q(g)g$ for every $g \in \supp(q)$.
	\end{enumerate}
	
	Both $\mathcal{D}$ and $\mathcal{C}$ are finite and thus $X_{\mathcal{T}}$ is a $G$-SFT. We claim that $\tau \in X_{\mathcal{T}}$ if and only if $\tau$ is a tiling of $G$ by $\mathcal{T}$. We shall show this in two parts. Let $\tau \in (\mathcal{T}\cup \{\varnothing\})^G$.
	
	\begin{enumerate}
		\item $\tau$ is pairwise disjoint if and only if no pattern from $\mathcal{D}$ appears in $\tau$. Indeed, if $\tau$ is not pairwise disjoint there are $h_1 \neq h_2$ such that $\tau(h_1)h_1 \cap \tau(h_2)h_2 \neq \varnothing$. Letting $\tau' = h_1\tau$ we have $\tau'(1_G) = \tau(h_1)$ and $\tau'(h_2h_1^{-1}) = \tau(h_2)$, therefore $\tau(h_1)h_1 \cap \tau(h_2)h_2 \neq \varnothing$ if and only if $\tau'(1_G) \cap (\tau'(h_2h_1^{-1}))h_2h_1^{-1} \neq \varnothing$. This means that there exist $t_1 \in \tau'(1_G)$ and $t_2 \in \tau'(h_2h_1^{-1})$ such that $t_1 = t_2h_2h_1^{-1}$, equivalently such that $h_2h_1^{-1} = t_2^{-1}t_1$. Let $g = t_2^{-1}t_1$. We get that $\tau'(1_G) \cap (\tau'(g))g \neq \varnothing$ if and only if $\tau'|_{\{1_G,g\}} \in \mathcal{D}$ and thus $\tau'|_{\{1_G,g\}}$ appears in $\tau$.
		\item $\tau$ covers $G$ if and only if no pattern from $\mathcal{C}$ appears in $\tau$. Indeed, suppose $\tau$ does not cover $G$, then there is $g \in G$ such that for every $h \in G$, $g \notin \tau(h)h$. Letting $\tau' = g\tau$ we obtain that $\tau(h) = \tau'(hg^{-1})$ and hence $g \notin \tau(h)h$ for every $h \in G$ if and only if $1_G \notin \tau'(hg^{-1})hg^{-1}$ for every $h \in G$ which is the same as saying that $1_G \notin \tau'(s)s$ for every $s \in G$. This is equivalent to $\tau'|_{\bigcup_{i \leq n}T_i^{-1}} \in \mathcal{C}$. Therefore $\tau$ does not cover $G$ if and only if there is $g \in G$ such that $(g\tau)|_{\bigcup_{i \leq n}T_i^{-1}} \in \mathcal{C}$, which is the same as saying that a pattern from $\mathcal{C}$ appears in $\tau$.
	\end{enumerate}
	Therefore $\tau \in X_{\mathcal{T}}$ if and only if $\tau$ is a tiling of $G$ by $\mathcal{T}$.\end{proof}

\begin{remark}
	The orbit closure of any tiling $\tau \colon G \to  \mathcal{T} \cup \{\varnothing\}$ forms a $G$-subshift which is not necessarily of finite type. We shall denote by $\htop(\tau)$ the topological entropy of said subshift.
\end{remark}

\begin{theorem}[Downarowicz, Huczek and Zhang~\cite{DownarowiczHuczekZhang2019}]\label{teorema_tiling_exacto}
	Let $G$ be a countable amenable group. For any $F\Subset G$ and $\delta >0$ there exists a tile set $\mathcal{T}$ such that every $T \in \mathcal{T}$ is $(F,\delta)$-invariant and there exists a tiling $\tau$ by $\mathcal{T}$ such that $\htop(\tau )=0$.
\end{theorem}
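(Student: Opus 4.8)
The statement is precisely the main theorem of~\cite{DownarowiczHuczekZhang2019}, so in the article one simply invokes it; I nonetheless outline the approach I would take. The plan is to build the tiling in three stages: first produce \emph{approximate} tilings (quasitilings) with arbitrarily invariant shapes via the Ornstein--Weiss machinery, then upgrade these to genuine exact tilings (no overlaps, no gaps) by a finite shape set, and finally arrange that the tiling so produced lies in a zero-entropy subshift. Amenability enters only at the first stage, through the existence of a F\o lner sequence.

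First I would invoke the Ornstein--Weiss quasitiling theorem~\cite{OrnWei1987}: given the prescribed $F \Subset G$ and $\delta > 0$, and any tolerance $\varepsilon > 0$, there is a finite family of $(F,\delta)$-invariant shapes $S_1,\dots,S_n \Subset G$ such that every sufficiently invariant region of $G$ can be $\varepsilon$-quasitiled by translates of the $S_i$, meaning the translates are $\varepsilon$-disjoint and cover all but an $\varepsilon$-fraction of the region. This produces a rich supply of invariant shapes but only an approximate covering, so the core of the argument lies elsewhere.

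The second and main step is to remove the overlaps and fill the gaps so as to obtain an exact tiling by finitely many $(F,\delta)$-invariant tiles. The naive approach is to proceed in layers: quasitile most of $G$, then quasitile the uncovered remainder, and iterate; but carrying this out with only finitely many shapes, while disjointifying the $\varepsilon$-overlapping translates (reassigning boundary elements) and guaranteeing that the residual gaps can be \emph{exactly} partitioned rather than merely quasitiled, is exactly where the difficulty concentrates. This is the step I expect to be the main obstacle. Downarowicz, Huczek and Zhang resolve it by organizing the whole construction hierarchically, fixing a rapidly growing sequence of invariance parameters and building a nested family of tilings in which each coarse tile is an exact union of finer tiles, so that exactness propagates down the hierarchy and only finitely many shapes occur at each level.

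Finally, to obtain $\htop(\tau)=0$ I would exploit precisely this hierarchical, deterministic structure. Because the coarser levels of the construction are completely determined by the finer ones, the restriction of $\tau$ to any F\o lner set $F_m$ is specified by an amount of data that is $\smallo(|F_m|)$; hence the number of distinct patterns of $\tau$ seen through $F_m$ grows subexponentially in $|F_m|$, which by the definition of topological entropy forces the orbit closure of $\tau$ to have entropy zero. The technical heart is thus the simultaneous control of exactness, invariance, and determinism throughout the inductive construction, and for the full details I would refer to~\cite{DownarowiczHuczekZhang2019}.
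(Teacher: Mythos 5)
The paper offers no proof of this statement: it is imported verbatim as a theorem of Downarowicz, Huczek and Zhang~\cite{DownarowiczHuczekZhang2019} and simply invoked, which is exactly what you do. Your sketch of the cited argument (Ornstein--Weiss quasitilings with $(F,\delta)$-invariant shapes, the hierarchical congruent construction that upgrades quasitilings to exact tilings as the main difficulty, and zero entropy from the subexponential pattern count forced by the determinism of the hierarchy) is a faithful summary of the proof in that reference, so your treatment is consistent with the paper's.
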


\begin{lemma}\label{lemma_SFT_small}
	Let $G$ be a countable amenable group and $X \subset \Sigma^G$ be a $G$-SFT. Suppose that $Y \subset X$ is a subshift of $X$, then for every $\varepsilon>0$ there exists a $G$-SFT $Z\subset X$ so that \[\htop(G \curvearrowright Y) \leq \htop(G \curvearrowright Z) \leq \htop(G \curvearrowright Y)+\varepsilon.\]
\end{lemma}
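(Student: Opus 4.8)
The plan is to approximate $Y$ from above by a decreasing sequence of SFTs whose intersection is exactly $Y$, and then to invoke the infimum formula for the entropy to select a single member of the sequence whose entropy lies within $\varepsilon$ of $\htop(G\curvearrowright Y)$.

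Concretely, fix an increasing exhaustion $F_1 \subset F_2 \subset \cdots$ of $G$ by finite sets with $\bigcup_n F_n = G$, and let $\mathcal{F}_X$ be the finite set of forbidden patterns defining $X$. For each $n$ I would define $Z_n \subset \Sigma^G$ to be the SFT obtained by forbidding $\mathcal{F}_X$ together with every pattern $p \in \Sigma^{F_n}$ with $p \notin L_{F_n}(Y)$. Each $Z_n$ is then an SFT, $Z_n \subseteq X$, and $Y \subseteq Z_n$ because configurations of $Y$ avoid all these patterns. Since the restriction to $F_n$ of an admissible $F_{n+1}$-pattern is again admissible, the sequence is nested, $Z_{n+1} \subseteq Z_n$. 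A standard compactness argument shows $\bigcap_n Z_n = Y$: any $x$ in the intersection has all its $F_n$-windows in $L_{F_n}(Y)$, so for each $n$ there is $y_n \in Y$ agreeing with $x$ on $F_n$; as $F_n$ exhausts $G$ these $y_n$ converge to $x$, and $Y$ is closed. (Including $\mathcal{F}_X$ at every level does not disturb this, since $Y \subseteq X$.)

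The key step is the entropy estimate, and here the infimum formula \eqref{eq_entropyforidiots} is exactly what makes the argument work. By that formula there is a single finite set $F \Subset G$ with $\frac{1}{|F|}\log|L_F(Y)| \leq \htop(G\curvearrowright Y) + \varepsilon$. I claim $L_F(Z_n) = L_F(Y)$ for all sufficiently large $n$: the inclusion $L_F(Y) \subseteq L_F(Z_n)$ is automatic, and conversely if a pattern $p$ on $F$ belonged to every $L_F(Z_n)$, then picking $x_n \in Z_n$ with $x_n|_F = p$ and passing to a convergent subsequence, the limit would lie in $\bigcap_n Z_n = Y$ and restrict to $p$, so $p \in L_F(Y)$; since there are finitely many patterns on $F$ and $L_F(Z_n)$ is decreasing in $n$, it stabilizes to $L_F(Y)$ at some index $N$. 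Setting $Z = Z_N$, the infimum formula gives $\htop(G\curvearrowright Z) \leq \frac{1}{|F|}\log|L_F(Z)| = \frac{1}{|F|}\log|L_F(Y)| \leq \htop(G\curvearrowright Y)+\varepsilon$, while $Y \subseteq Z$ yields the lower bound $\htop(G\curvearrowright Y) \leq \htop(G\curvearrowright Z)$ by monotonicity of the entropy under inclusion.

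I expect the entropy convergence to be the main obstacle. A naive approach comparing $|L_{F_n}(Z_n)|$ to $|L_{F_n}(Y)|$ along the exhaustion would require controlling how far SFT-admissibility on a finite window is from genuine $Y$-admissibility, and these need not coincide on any fixed window. The infimum formula sidesteps this by letting us fix in advance one window $F$ that already nearly realizes the entropy of $Y$, so that it suffices to force agreement of the two languages on that single window, which the stabilization argument delivers. The remaining points requiring care are the verification that $Z_n$ is genuinely nested (via restrictions of admissible patterns being admissible) and that building $\mathcal{F}_X$ into every level both guarantees $Z \subseteq X$ and leaves $\bigcap_n Z_n = Y$ intact.
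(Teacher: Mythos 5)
Your proof is correct and is essentially the paper's argument: both rest on the infimum formula \eqref{eq_entropyforidiots} to pick one finite window $D$ nearly realizing $\htop(G \curvearrowright Y)$, then forbid $\Sigma^{D}\setminus L_{D}(Y)$ together with the defining patterns of $X$, and use monotonicity for the lower bound. The paper does this in a single step on the chosen window, which makes your exhaustion sequence, the claim $\bigcap_n Z_n = Y$, and the compactness/stabilization argument unnecessary: once $F \subseteq F_n$, any $x \in Z_n$ satisfies $x|_{F} \in L_{F}(Y)$ directly, so $L_{F}(Z_n) = L_{F}(Y)$ without passing to a limit.
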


\begin{proof}
	Fix $\varepsilon >0$. By~\Cref{eq_entropyforidiots} there exists $D \Subset G$ so that $\log(|L_{D}(Y)| \leq |D|(\htop(G \curvearrowright Y)+\varepsilon)$. Let $\FF_1$ be a set of forbidden patterns which defines $X$ and let $\FF = \FF_1 \cup (\Sigma^D \setminus L_{D}(Y))$. Letting $Z$ be the $G$-SFT defined by the set of forbidden patterns $\FF$, we have $Z \subset X$ and $Y \subset Z$ from where it follows that $\htop(G \curvearrowright Y) \leq \htop(G \curvearrowright Z)$. Furthermore, by construction we get $L_D(Z) = L_D(Y)$ and thus we have \[
	\htop(G \curvearrowright Z) = \inf_{F \in \mathcal{F}(G)} \frac{1}{|F|}\log(|L_F(Z)|)  \leq \frac{1}{|D|}\log(|L_D(Z)| \leq \htop(G \curvearrowright Y)+\varepsilon.\]
	And so $Z$ satisfies the required properties.\end{proof}

Let $T,K$ be finite subsets of $G$. The $K$-core of $T$ is the set $\textrm{Core}_K(T) = \{t\in T \mid Kt \subset T  \}$. It is an easy exercise to show that if $T$ is a $(K,\frac{\delta}{|K|})$-invariant set, then $|T \setminus \textrm{Core}_K(T)|< \delta |T|$, for a proof,  see~\cite[Lemma 2.6]{DownarowiczHuczekZhang2019}.

Now we are ready to state the main theorem of this section which shows that every SFT admits subsystems with arbitrarily low topological entropy and which are also SFTs.

\begin{theorem}\label{theorem_tilings_forthewin}
	Let $G$ be a countable amenable group and $X\subset \Sigma^G$ be a $G$-SFT. For every $\varepsilon > 0$ there exists a $G$-SFT $Z \subset X$ such that $h_{top}(G \curvearrowright Z) \leq \epsilon$
\end{theorem}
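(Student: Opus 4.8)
The plan is to reduce the statement, via \Cref{lemma_SFT_small}, to the construction of a single non-empty subshift $Y\subseteq X$ of small topological entropy, and to build that subshift by freezing most of each configuration along a zero-entropy tiling supplied by \Cref{teorema_tiling_exacto}. If $X=\varnothing$ there is nothing to prove, so I assume $X\neq\varnothing$ and fix a reference configuration $x_0\in X$. Write $\FF$ for a finite set of forbidden patterns defining $X$ and choose $K\Subset G$ with $1_G\in K=K^{-1}$ large enough to contain the support of every pattern of $\FF$, so that membership in $X$ is checked on translates of $K$. It then suffices to produce a non-empty $G$-subshift $Y\subseteq X$ with $\htop(G\curvearrowright Y)\le \varepsilon/2$: applying \Cref{lemma_SFT_small} to the pair $Y\subseteq X$ yields a $G$-SFT $Z$ with $Y\subseteq Z\subseteq X$ and $\htop(G\curvearrowright Z)\le \htop(G\curvearrowright Y)+\varepsilon/2\le\varepsilon$.

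To build $Y$, I would apply \Cref{teorema_tiling_exacto} with the shape $K$ and a parameter $\delta>0$ to be fixed later, obtaining a tileset $\mathcal T=\{T_1,\dots,T_n\}$ whose tiles are $(K,\delta)$-invariant and a tiling $\tau^*$ whose orbit closure $\mathcal Y$ is a $G$-subshift (by \Cref{lemma_tilings_are_SFTS}) with $\htop(G\curvearrowright\mathcal Y)=0$. For each type $T_i$ I would choose a template $w_i\in\Sigma^{\textrm{Core}_K(T_i)}$ and consider the subshift of the product
\[ \tilde Y=\Big\{(x,\tau)\in X\times\mathcal Y \ :\ (gx)|_{\textrm{Core}_K(T_i)}=w_i \ \text{ whenever } \tau(g)=T_i\Big\}. \]
Here $(gx)|_{T_i}$ is exactly the rebased content of $x$ on the tile $T_i g$, so the condition locks $x$ to $w_i$ on the $K$-core of every tile while leaving the co-cores free. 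The set $\tilde Y$ is closed and $G$-invariant, hence a subshift, and its projection $Y\isdef\proj_X(\tilde Y)\subseteq X$ is a continuous equivariant image of a subshift, hence again a $G$-subshift contained in $X$ with $\htop(G\curvearrowright Y)\le\htop(G\curvearrowright\tilde Y)$.

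The entropy of $\tilde Y$ is controlled by the tiling. Once a tiling $\tau$ is fixed, a pair $(x,\tau)\in\tilde Y$ is determined on every core $\textrm{Core}_K(T_i)g$ with $\tau(g)=T_i$, so over a finite set $F$ the only free coordinates of $x$ are those lying in the co-cores $T_i\setminus\textrm{Core}_K(T_i)$. Since the tiles are $(K,\delta)$-invariant, the core estimate cited from \cite[Lemma 2.6]{DownarowiczHuczekZhang2019} gives $|T_i\setminus\textrm{Core}_K(T_i)|\le\delta'|T_i|$ with $\delta'\to0$ as $\delta\to0$, so along a Følner sequence the number of co-core sites meeting $F$ is at most $(\delta'+o(1))|F|$. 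Bounding $|L_F(\tilde Y)|$ by the number of tilings times $|\Sigma|$ raised to the number of free co-core sites, and using $\htop(G\curvearrowright\mathcal Y)=0$, yields
\[ \htop(G\curvearrowright\tilde Y)\le \htop(G\curvearrowright\mathcal Y)+\delta'\log|\Sigma|=\delta'\log|\Sigma|. \]
Fixing $\delta$ small enough that $\delta'\log|\Sigma|\le\varepsilon/2$ gives $\htop(G\curvearrowright Y)\le\varepsilon/2$, which is the required bound.

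The step I expect to be the main obstacle is the non-emptiness of $\tilde Y$, equivalently the existence of a family of templates $(w_i)$ that is simultaneously realized along a single tiling by an honest configuration of $X$. This is exactly where the hypothesis that $X$ is an \emph{SFT}, rather than an arbitrary subshift, must be used: an arbitrary subshift may be minimal with positive entropy, and would then contain no non-empty subsystem of smaller entropy, so no such reduction could exist. I would resolve this by not prescribing the $w_i$ in advance but extracting them from a configuration adapted to the tiling. Because the tiles are $(K,\delta)$-invariant, the $K$-cores of distinct tiles are separated by buffer layers of $K$-width, so the finitely many constraints of $X$ that couple different tiles act only inside these buffers; the delicate point is to arrange the buffer fillings (starting from $x_0$ and proceeding by a compactness argument over increasing unions of tiles of $\tau^*$) so that the uniform cores glue to a genuine point $\hat x\in X$ whose restriction to each core depends only on the tile type. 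Reading off $w_i\isdef\hat x|_{\textrm{Core}_K(T_i)}$ then exhibits $(\hat x,\tau^*)\in\tilde Y$, so $Y$ is non-empty, and together with the entropy bound and \Cref{lemma_SFT_small} this produces the desired non-empty $G$-SFT $Z\subseteq X$ with $\htop(G\curvearrowright Z)\le\varepsilon$.
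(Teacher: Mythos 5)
Your reduction via \Cref{lemma_SFT_small} and your entropy count for $\tilde Y$ (free sites only in co-cores, tiling layer contributing zero entropy) are sound and match the paper's counting. The genuine gap is exactly the step you flagged: non-emptiness of $\tilde Y$. Your construction demands a \emph{single} template $w_i$ per tile type, realized simultaneously on every tile of that type, and this can fail for \emph{every} choice of templates, for legitimate outputs of \Cref{teorema_tiling_exacto}. Concretely, take $G=\ZZ^2$, $\Sigma = \ZZ/3\ZZ$, and the SFT $X$ defined by $x(v+e_1)=x(v)+1$ and $x(v+e_2)=x(v)+1 \bmod 3$, so $X$ consists of the three configurations $x(v)=c+v_1+v_2 \bmod 3$. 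A perfectly valid tileset for \Cref{teorema_tiling_exacto} is a single $N\times N$ square $T$ with $N$ large and $3\nmid N$, with $\tau^*$ the periodic lattice tiling (zero entropy). The rebased core content of the tile based at $g=(Na,Nb)$ is determined by $c+N(a+b)\bmod 3$, which takes all three values as $(a,b)$ varies; hence no single template can match all tiles, and the same holds for every tiling in the orbit closure of $\tau^*$. So $\tilde Y=\varnothing$ no matter how the $w_i$ are chosen, and your proposed repair --- extracting templates from a configuration and filling buffers by compactness --- cannot work, because the obstruction is a global phase constraint, not a gluing problem: SFTs have no finite extension property, and here no configuration of $X$ has type-determined cores at all. (Aperiodic examples such as the Robinson shift exhibit the same obstruction at all scales via the hierarchical levels readable inside large cores.)

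The paper's proof is designed precisely to dodge this synchronization requirement. It does not prescribe core contents in the low-entropy system: instead, inside the $K$-core of each tile it stores deterministic \emph{address} symbols (the element $h\in \mathrm{Core}_K(T)$ itself), and it only requires the co-core pattern to lie in the \emph{global} language $L_{T\setminus\mathrm{Core}_K(T)}(X)$. Non-emptiness is then immediate: overlay any $x_0\in X$ with $\tau^*$ and replace cores by addresses. The passage back into $X$ is a continuous $G$-equivariant map $\phi$ that refills each core with a pattern $\eta(T,p)\in L_T(X)$ extending the co-core pattern $p$ --- such an extension exists by definition of the language, and crucially it is allowed to depend on $p$, not just on the tile type (in the mod-$3$ example, $p$ encodes the phase). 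Since topological entropy does not increase under factor maps, $\phi(Y)\subset X$ has small entropy and \Cref{lemma_SFT_small} finishes as in your outline; the choice $K=FF^{-1}$ guarantees any forbidden pattern touching a core lies wholly inside one tile, so $\phi(Y)\subset X$ indeed holds. If you replace your ``fixed template per type'' condition with this address-plus-admissible-co-core mechanism and a filling map, your argument becomes the paper's proof.
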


\begin{proof}
	
	We claim that it suffices to show that for every $\varepsilon >0$ there exists a $G$-SFT $Y$ (on a different alphabet) such that $h_{top}(G \curvearrowright Y) \leq \epsilon$ and a continuous $G$-equivariant map $\phi\colon Y \to X$. Indeed, if this is the case, using the above result with $\frac{\varepsilon}{2}$ and the property that (for amenable group actions) topological entropy does not increase under topological factor maps, we obtain that $\phi(Y)$ is a subshift of $X$ with entropy $\htop(G \curvearrowright \phi(Y)) \leq \frac{\varepsilon}{2}$. Using~\Cref{lemma_SFT_small} with $\frac{\varepsilon}{2}$ we obtain an SFT $Z \subset X$ whose entropy is bounded by $\htop(G \curvearrowright \phi(Y)) + \frac{\varepsilon}{2} \leq \varepsilon$ as required.
	
	Let us show the above claim. Let $\FF$ be a finite set of forbidden patterns which defines $X$, let $F = \bigcup_{p \in \FF}\supp(p)$ be the union of their supports and $K = FF^{-1}$. By~\Cref{teorema_tiling_exacto} there exists a tileset $\mathcal{T}= \{T_1,\dots,T_n\}$ such that every tile in $\mathcal{T}$ is $(K,\frac{\varepsilon}{4|K|\log(|\Sigma|)})$-invariant and which admits a tiling $\tau^*$ by $\mathcal{T}$ with zero entropy. In particular, the $K$-core of each tile $T \in \mathcal{T}$ satisfies $|T \setminus \textrm{Core}_K(T)|< \frac{\varepsilon}{4\log(|\Sigma|)} |T|$ and we can find a finite set $D \Subset G$ such that $\log(|L_{D}(\overline{ \{g \tau^*\}_{g \in G}}  )|) \leq \frac{\varepsilon}{4} |D|$.
	
	By~\Cref{lemma_tilings_are_SFTS} the set $X_{\mathcal{T}}$ of all tilings of $G$ by $\mathcal{T}$ is a $G$-SFT. Consider the subshift of finite type $X^{\mathcal{L}}_{\mathcal{T}} \subset X_{\mathcal{T}}$ where we additionally forbid the finite set of patterns $\mathcal{L}$:
	
	\[\mathcal{L} = (\mathcal{T}\cup \{\varnothing\})^D \setminus L_{D}(\overline{ \{g \tau^*\}_{g \in G}}  ). \]
	
	Clearly $\tau^* \in X^{\mathcal{L}}_{\mathcal{T}}$, hence $X^{\mathcal{L}}_{\mathcal{T}}$ is a non-empty $G$-SFT. Furthermore we have
	
	\[ \htop(G \curvearrowright X^{\mathcal{L}}_{\mathcal{T}}) = \inf_{F \Subset G} \frac{1}{|F|}\log(|L_{F}(X^{\mathcal{L}}_{\mathcal{T}})|) \leq \frac{1}{|D|}\log(|L_{D}(X^{\mathcal{L}}_{\mathcal{T}})|) \leq  \frac{\varepsilon}{4}.  \]
	
	Consider the set $U \isdef \bigcup_{i \leq n}T_i$. We define $X^{\star}$ as the set of all configurations in $(\Sigma \cup U)^G$ for which no forbidden patterns from $\FF$ appear. Finally, we define $Y \subset X^{\mathcal{L}}_{\mathcal{T}} \times X^{\star}$ as the set of all pairs of configurations $(\tau,x)$ such that for every $g \in G$ if we let $(\tau',x') = (g\tau,gx)$ then we have:
	\begin{enumerate}
		\item If $h \in \textrm{Core}_K(\tau'(1_G))$ then $x'(h) = h$.
		\item If $h \in \tau'(1_G) \setminus \textrm{Core}_K(\tau'(1_G))$ we have $x'(h)\in \Sigma$.
		\item $x'|_{\tau'(1_G)\setminus \textrm{Core}_K(\tau'(1_G))} \in L_{\tau'(1_G)\setminus \textrm{Core}_K(\tau'(1_G))}(X)$.
	\end{enumerate}
	
	In other words, $Y$ is the $G$-subshift which consists of all configurations obtained by overlaying some $x \in X$ with a tiling $\tau \in X^{\mathcal{L}}_{\mathcal{T}}$ and replacing every symbol in the $K$-core of a tile by an address pointing to the center of the tile.
	
	We claim $Y$ is a $G$-SFT. Indeed, it can be obtained from the $G$-SFT $X^{\mathcal{L}}_{\mathcal{T}} \times X^{\star}$ by forbidding the finite collection of all patterns $p$ with support $U$ for which the first coordinate of $p(1_G)$ is some $T \in \mathcal{T}$ and either there is $g \in \textrm{Core}_K(T)$ for which the second coordinate of $p(g)$ is not $g$ or the pattern obtained by restricting the second coordinate of $p$ to ${T \setminus \textrm{Core}_K(T)}$ is not in  $L_{T\setminus \textrm{Core}_K(T)}(X)$. We leave it as an exercise to the reader to verify that $(\tau,x) \in Y$ if and only if no patterns as above appear.
	
	Let us first construct the $G$-equivariant map $\phi \colon Y \to X$. Informally, $\phi$ is the map that erases the tiling $\tau$ and replaces the addresses (which appear in the $K$-core of some $Tg$ for $T \in \mathcal{T}$) by the symbols of some fixed pattern which depends only on the values of $x$ on $Tg \setminus \textrm{Core}_K(T)g$. Formally, associate to every $T \in \mathcal{T}$ and pattern $p \in L_{T\setminus \textrm{Core}_K(T)}(X)$ a pattern $\eta(T,p) \in L_{T}(X)$ such that $\eta(T,p)|_{T\setminus \textrm{Core}_K(T)} = p$. Let $\Phi\colon Y \to \Sigma$ be defined by
	
	\[\Phi(\tau,x) \isdef \begin{cases}
	x(1_G) & \mbox{ if }x(1_G) \in \Sigma \\
	\eta(\tau(h^{-1}),(h^{-1}x)|_{\tau(h^{-1}) \setminus \textrm{Core}_K(\tau(h^{-1}))})(h) & \mbox{ if }x(1_G) = h \in U.
	\end{cases}   \]

	As $U$ is finite this map is local. As a consequence, $\phi \colon Y \to \Sigma^G$ given by $\phi(\tau,x)(g) = \Phi(g\tau,gx)$ is a continuous $G$-equivariant map. 
	
	Let us show that $\phi(\tau,x) \in X$. If it is not the case, then there exists $p \in \FF$ and $g \in G$ such that $\phi(g\tau,gx)|_{\supp(p)} = p$. For simplicity, let us rename $(\tau',x') = (g\tau,gx)$. If for every $s \in \supp(p)$ we have $x'(s) \in \Sigma$ then $\phi(\tau',x')|_{\supp(p)} = x'|_{\supp(p)}$ which cannot be $p$ by definition of $X^{\star}$. Otherwise we have $\bar{s} \in \supp(p)$ such that $x'(\bar{s}) = h \in U$, which in turn means that $\tau'(h^{-1}\bar{s}) \in \mathcal{T}$. In other words, for $f = h^{-1}\bar{s}$ we have $\bar{s} \in \textrm{Core}_K(\tau'(f))f$. By definition of $K$-core, we have that $K\bar{s} \subset \tau'(f)f$. As $\supp(p) \subset F$ and $K = FF^{-1}$ we obtain that $\supp(p)\subset \tau'(f)f$. By definition of $\phi$ and $\eta$ we have that $\phi(f\tau',fx')|_{\tau'(f)} = \eta(\tau'(f), fx'|_{\tau'(f)\setminus \textrm{Core}_K(\tau'(f))}) \in L_{\tau'(f)}(X)$. In particular, $\phi(\tau',x')|_{\tau'(f)f} \in L_{\tau'(f)f}(X)$. As $\supp(p)\subset \tau'(f)f$ this shows that $\phi(\tau',x')|_{\supp(p)} \neq p$, raising a contradiction. 
	
	Lastly, let us verify that $\htop(G\curvearrowright Y) \leq \varepsilon$. As $\htop(G \curvearrowright X^{\mathcal{L}}_{\mathcal{T}}) \leq \frac{\varepsilon}{4}$, we can find $W_1 \Subset G$ and $\delta_1 >0$ such that any $(W_1,\delta_1)$-invariant set $R$ satisfies $\log(|L_{R}(X^{\mathcal{L}}_{\mathcal{T}})|) \leq |R|\frac{\varepsilon}{2}$. Pick $W \isdef W_1 \bigcup U$ and $\delta < \delta_1$ sufficiently small (for instance $\delta < \min(\delta_1,\frac{\varepsilon}{4|U|\log(|\Sigma|)})$) such that any $(W,\delta)$-invariant set $R$ satisfies that $|R \setminus \textrm{Core}_U(R)|< \frac{\varepsilon}{4\log(|\Sigma|)}|R|$. 
	
	Fix $\tau \in X^{\mathcal{L}}_{\mathcal{T}}$ and let us denote by $L_{R}(Y,\tau)$ the set of $p \in L_R(Y)$ for which the first coordinate is $\tau|_{R}$. Let us write $R$ as the disjoint union $R_1 \uplus R_2$ where $R_1$ is the set of all $g \in R$ for which there is $h \in R$ such that $\tau(h)h \subset R$. By definition, as $\tau(h) \subset U$, we have that $R_2 \subset R \setminus \textrm{Core}_U(R)$ and hence $|R_2|< \frac{\varepsilon}{4\log(|\Sigma|}|R|$. On the other hand, the symbols in every position in $\textrm{Core}_K(\tau(h))h$ are fixed. As the $\tau(h)h$ cover $R_1$ and $|\tau(h)h \setminus \textrm{Core}_K(\tau(h)h)|< \frac{\varepsilon}{4\log(|\Sigma|)} |\tau(h)|$ we have at most $\frac{\varepsilon}{4\log(|\Sigma|)}|R_1| \leq \frac{\varepsilon}{4\log(|\Sigma|)}|R|$ positions in $R_1$ are potentially free. Therefore we obtain the bound  \[ |L_{R}(Y,\tau)| \leq |\Sigma|^{|R_2|} |\Sigma|^{\frac{\varepsilon}{4\log(|\Sigma|)}|R_1|} \leq |\Sigma|^{\frac{\varepsilon}{2\log(|\Sigma|)}|R|}.  \]
	Note that this does not depend upon the choice of $\tau$. We can thus obtain
	
	 \[ |L_{R}(Y)| \leq |L_{R}(X^{\mathcal{L}}_{\mathcal{T}})||\Sigma|^{\frac{\varepsilon}{2\log(|\Sigma|)}|R|} \leq  \exp(|R|\frac{\varepsilon}{2})|\Sigma|^{\frac{\varepsilon}{2\log(|\Sigma|)}|R|}.  \]
	
	Therefore \[ \htop(G \curvearrowright Y) \leq \frac{1}{|R|}\log(|L_{R}(Y)|) \leq \frac{1}{|R|} \left( |R|\frac{\varepsilon}{2} + |R|\frac{\varepsilon \log(|\Sigma|)}{2 \log(|\Sigma|)} \right) \leq \varepsilon. \]
	Which completes the proof.
\end{proof}

Before applying~\Cref{theorem_tilings_forthewin} to reduce the entropy of a chart, let us mention a nice application which shows that for any arbitrary countable amenable group, every subshift of finite type must necessarily contain a subsystem with zero topological entropy. This extends the result of Quas and Trow~\cite[Corollary 2.3]{QuasTrow2000} which shows that minimal $\ZZ^d$-SFTs have zero topological entropy and whose argument works for amenable orderable groups. Let us also remark the work of Frisch and Tamuz~\cite{FrischTamuz2015} also gives a way to obtain Quas and Trow's result for arbitrarily countable amenable groups and that the author is aware of a non-published direct proof by Ville Salo which works for any amenable and finitely generated group and relies on a combinatorial argument.

\begin{corollary}
	Let be $G$ a countably infinite amenable group. Any $G$-SFT $X$ contains a $G$-invariant closed subset with zero topological entropy. In particular, every minimal $G$-SFT has zero topological entropy.
\end{corollary}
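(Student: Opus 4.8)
The plan is to bootstrap~\Cref{theorem_tilings_forthewin} by applying it repeatedly, producing a nested sequence of subsystems whose entropies tend to zero, and then pass to their intersection. First I would dispose of the trivial case: if $X = \varnothing$ there is nothing to prove, so assume $X$ is non-empty. Here I would record the key observation that the SFT produced by~\Cref{theorem_tilings_forthewin} is non-empty whenever the input SFT is non-empty; this is visible from its construction, since the factor map $\phi$ has non-empty image and the SFT obtained from~\Cref{lemma_SFT_small} contains that image.

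Next I would build the nested sequence. Set $Z_0 = X$ and, given a non-empty $G$-SFT $Z_{n-1}$, apply~\Cref{theorem_tilings_forthewin} to $Z_{n-1}$ with $\varepsilon = \tfrac{1}{n}$ to obtain a non-empty $G$-SFT $Z_n \subseteq Z_{n-1}$ with $\htop(G\curvearrowright Z_n) \leq \tfrac{1}{n}$. This yields a decreasing chain $X = Z_0 \supseteq Z_1 \supseteq Z_2 \supseteq \cdots$ of non-empty $G$-SFTs. Let $Y = \bigcap_{n \in \NN} Z_n$. As a decreasing intersection of non-empty compact sets, $Y$ is non-empty, and it is closed and $G$-invariant, hence a $G$-subshift contained in $X$. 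Since $Y \subseteq Z_n$ we have $L_F(Y) \subseteq L_F(Z_n)$ for every $F \Subset G$, so by the infimum formula~\eqref{eq_entropyforidiots} entropy is monotone under inclusion and $\htop(G \curvearrowright Y) \leq \htop(G \curvearrowright Z_n) \leq \tfrac{1}{n}$ for all $n$; letting $n \to \infty$ gives $\htop(G \curvearrowright Y) = 0$, which proves the first assertion.

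For the ``in particular'' clause, let $M$ be a minimal $G$-SFT. The first assertion, applied to $M$, provides a non-empty closed $G$-invariant subset $Y \subseteq M$ with $\htop(G \curvearrowright Y) = 0$, the non-emptiness being exactly the point established by the compactness argument above. Since $M$ is minimal and $Y$ is a non-empty closed $G$-invariant subset, minimality forces $Y = M$, whence $\htop(G \curvearrowright M) = 0$.

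The only delicate point is the non-emptiness of the SFTs along the chain: the monotonicity of entropy under inclusion and the compactness argument for the intersection are routine, but the whole construction collapses if~\Cref{theorem_tilings_forthewin} could legitimately return the empty system. Thus the one thing I would verify carefully is that the theorem preserves non-emptiness, extracting this explicitly from its proof rather than from the bare statement.
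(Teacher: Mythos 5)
Your proposal is correct and follows essentially the same route as the paper: iterate \Cref{theorem_tilings_forthewin} with $\varepsilon = \tfrac{1}{n}$ to get a nested chain of $G$-SFTs, intersect, and use compactness plus monotonicity of entropy. Your explicit verification that the theorem preserves non-emptiness (via the image of $\phi$) is a careful touch the paper leaves implicit, but it does not change the argument.
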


\begin{proof}
	Let $\varepsilon_n = \frac{1}{n}$ and let $Y_0 = X$. By~\Cref{theorem_tilings_forthewin} there exists a $G$-SFT $Y_1$ such that $\htop(G \curvearrowright Y_1) \leq \varepsilon_1$ and $Y_1 \subset Y_0$. Iterating this procedure we can obtain for every $n \in \NN$ a $G$-SFT $Y_n$ such that $\htop(G \curvearrowright Y_n) \leq \varepsilon_n = \frac{1}{n}$ and $Y_n \subset Y_{n-1}$. As each $Y_n$ is closed we have that $Z = \bigcap_{n \geq 0 }Y_n$ is non-empty. Clearly $Z$ is $G$-invariant as each $Y_n$ is $G$-invariant. Furthermore, $\htop(G\curvearrowright Z)\leq \htop(G \curvearrowright Y_n)$ for every $n \in \NN$, therefore $\htop(G\curvearrowright Z) =0$.
\end{proof}

Let us also remark that this result is in direct contrast with existence of minimal Toeplitz subshifts of arbitrary positive topological entropy on residually finite groups, see~\cite{Cortez2008, Krieger2007_toeptliz, Marthaentropytoeplitz2016}.

To the knowledge of the author, the following question is open even in $\ZZ^2$.

\begin{question}\label{question_zeroentropy}
	Does there exist an amenable group $G$ and a $G$-SFT which does not contain a zero-entropy $G$-SFT?
\end{question}

Let us go back to reducing the entropy of a chart.

\begin{corollary}\label{corollary_reducing_chart_entropy}
	Suppose there exists a free $G$-chart $(X,\gamma)$ for $H$ such that $X$ is a $G$-SFT. Then for every $\varepsilon >0$ there exists a free $G$-chart $(Y,\gamma')$ for $H$ such that $Y$ is a $G$-SFT and $\htop(G \curvearrowright Y) \leq \epsilon$.
\end{corollary}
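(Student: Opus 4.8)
The plan is to simply apply~\Cref{theorem_tilings_forthewin} to the $G$-SFT $X$ and then carry the cocycle $\gamma$ along to the resulting low-entropy subsystem. Concretely, given $\varepsilon > 0$, \Cref{theorem_tilings_forthewin} produces a $G$-SFT $Z \subset X$ with $\htop(G \curvearrowright Z) \leq \varepsilon$; I would set $Y = Z$ and $\gamma' = \gamma|_{H \times Z}$, and claim that $(Y,\gamma')$ is the desired free $G$-chart. Note that $Z$ is non-empty whenever $X$ is, since the subsystem constructed in the proof of~\Cref{theorem_tilings_forthewin} contains the image of a non-empty factor of $X$; thus the chart is non-degenerate, and the only thing that requires checking is that restricting $\gamma$ to $H \times Z$ still yields a \emph{free} $G$-chart of $H$.

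First I would verify that $\gamma'$ is a genuine $H$-cocycle on $Z$. Continuity is immediate, since $\gamma'$ is the restriction of the continuous map $\gamma$ to the compact subset $H \times Z$. The observation that makes the cocycle equation survive the restriction is that $Z$, being a $G$-SFT, is $G$-invariant: for $x \in Z$ and $h_2 \in H$ the element $\gamma(h_2,x)x$ is a $G$-translate of $x$, hence lies in $Z$, so the right-hand side $\gamma(h_1,\gamma(h_2,x)x)\cdot\gamma(h_2,x)$ of the cocycle equation only evaluates $\gamma$ at points of $Z$ and therefore agrees with $\gamma'$ there. Consequently the identity
\[ \gamma'(h_1 h_2, x) = \gamma'(h_1, \gamma'(h_2, x) x) \cdot \gamma'(h_2, x) \]
for $x \in Z$ is exactly the instance of the cocycle equation for $(X,\gamma)$ at the point $x \in Z \subset X$, and so it holds.

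Finally I would check freeness. For each $x \in Z$ the induced action $H \overset{x}{\curvearrowright} G$ given by $h \cdot_x g = \gamma'(h,gx)g = \gamma(h,gx)g$ is literally the same action induced by the original chart at $x$, because the relevant evaluations $\gamma(h,gx)$ with $g \in G$ again involve only $G$-translates of $x$, which remain in $Z$. Since $(X,\gamma)$ is a free chart, this action is free for every $x \in X$, and in particular for every $x \in Z$; hence $(Y,\gamma')$ is a free $G$-chart of $H$ with $Y$ a $G$-SFT and $\htop(G \curvearrowright Y) \leq \varepsilon$, as required. I do not expect a genuine obstacle here beyond~\Cref{theorem_tilings_forthewin} itself: the whole content of the argument is the remark that the low-entropy subsystem it produces is automatically $G$-invariant, which is precisely what is needed both to restrict the cocycle and to inherit freeness.
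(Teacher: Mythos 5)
Your proposal is correct and follows exactly the paper's proof: apply \Cref{theorem_tilings_forthewin} to $X$ with the given $\varepsilon$ to obtain a low-entropy $G$-SFT subsystem, then restrict $\gamma$ to it. The paper dismisses the remaining verifications with ``Clearly $\gamma'$ is continuous and an $H$-cocycle,'' whereas you spell out the underlying reason --- $G$-invariance of the subsystem guarantees that the cocycle equation and the induced free actions only ever evaluate $\gamma$ at points of the subsystem --- which is a worthwhile elaboration but not a different argument.
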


\begin{proof}
	Apply~\Cref{theorem_tilings_forthewin} to $X$ and $\varepsilon>0$ to obtain a $G$-SFT $Y$ such that $\htop(G \curvearrowright Y) \leq \epsilon$ and $Y \subset X$. Let $\gamma' \colon H \times Y \to G$ be the restriction of $\gamma$ to $Y$. Clearly $\gamma'$ is continuous and an $H$-cocycle.
\end{proof}

\subsection{Conditions for the existence of free charts}\label{section_conditions_charts}

In this section we shall present conditions under which there exist free charts and conditions under which they can be realized with a subshift of finite type. An obvious condition which implies the existence of a free $G$-chart of $H$ is that $H$ embeds into $G$ as a subgroup, see~\Cref{example_obviouschart}. Note that in that case the chart automatically has entropy zero and we obtain the rather obvious corollary that $\entsft(H)\subset \entsft(G)$.

The notion that $H$ embeds into $G$ can be relaxed using the notion of translation-like action introduced by Whyte~\cite{Whyte1999}. We shall see that whenever the groups are finitely generated, this notion is closely related with the existence of free charts.

\begin{definition}
	Let $(X,d)$ be a metric space and $H$ a group. We say that $H \curvearrowright X$ is a \define{translation-like} action if
	\begin{itemize}
		\item $H \curvearrowright X$ is free, that is, for every $x \in X$ then $hx = x$ implies that $h = 1_H$.
		\item $H \curvearrowright X$ is bounded, that is, for every $h \in H$, $\sup_{x \in X} d(x,hx) < \infty$.
	\end{itemize}
\end{definition}

Any finitely generated group $G$ can be seen as a metric space by endowing it with a metric induced by a finite set of generators. In that case, the second condition can be replaced by the condition that for every fixed $h \in H$ the set of all $(h \cdot g)g^{-1}$ is finite.

\begin{proposition}
	Let $H,G$ be finitely generated groups. $H$ acts translation-like on $G$ if and only if there exists a free $G$-chart $(X,\gamma)$ of $H$.
\end{proposition}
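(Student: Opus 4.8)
The plan is to prove the two implications separately, translating between the combinatorial data of a translation-like action and the local rules of a subshift, with~\Cref{example_snake} as the guiding model. Fix a finite symmetric generating set $S$ for $H$, and recall that for finitely generated $G$ a translation-like action $H \curvearrowright G$ is exactly a free action for which, for every $h \in H$, the displacement set $\{(h\cdot g)g^{-1} : g \in G\}$ is finite. For the direction from a free chart to a translation-like action, fix a point $x \in X$ (a chart with empty $X$ being vacuous, we take $X$ nonempty). The cocycle induces the left action $H \overset{x}{\curvearrowright} G$ given by $h\cdot_x g = \gamma(h,gx)g$, which is free precisely because $(X,\gamma)$ is a free chart. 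Since $(h\cdot_x g)g^{-1} = \gamma(h,gx)$, for fixed $h$ this displacement lies in $\gamma(\{h\}\times X)$; as $X$ is compact, $\gamma(h,\cdot)$ is continuous and $G$ is discrete and countable, this set is finite. Hence the displacement set is finite and the action is translation-like.

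The reverse direction is the substantial one. Given a translation-like action $H\curvearrowright G$, set $W = \{(s\cdot g)g^{-1} : s\in S,\ g\in G\}$, which is finite by the displacement condition, and let $\Sigma = W^{S}$. I encode the action in the distinguished configuration $x_0\in\Sigma^G$ defined by $x_0(g)(s) = (s\cdot g)g^{-1}$, so that the symbol at $g$ records, for each generator $s$, the displacement by which $s$ moves $g$. For a configuration $x$, a word $w = s_1\cdots s_m$ over $S$, and a base point $g$, the \emph{$w$-path from $g$ in $x$} is the sequence obtained by iteratively applying the recorded displacements in the order dictated by the cocycle equation, so that $s\cdot_x g = x(g)(s)g$; because $W$ is finite this path stays in a bounded neighbourhood of $g$. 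I then define $X\subset\Sigma^G$ to consist of all $x$ such that, for every $g\in G$: \emph{(i)} every word $w$ with $w=_H 1_H$ has its $w$-path from $g$ return to $g$; and \emph{(ii)} every word $w$ with $w\neq_H 1_H$ has its $w$-path from $g$ not return to $g$. For each fixed $(w,g)$ the event ``the $w$-path from $g$ returns to $g$'' depends on only finitely many coordinates of $x$, hence is clopen; intersecting the resulting clopen conditions over all $w$ and $g$ shows $X$ is closed, and quantifying over all $g$ makes it shift-invariant, so $X$ is a $G$-subshift. Since the genuine action is free and respects the relations of $H$, the configuration $x_0$ satisfies \emph{(i)} and \emph{(ii)}, so $X\neq\varnothing$.

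It remains to define the cocycle and check freeness. Set $\gamma(h,x)$ to be the endpoint of the $w$-path from $1_G$ in $x$, for any word $w=_H h$. Condition \emph{(i)} guarantees this is independent of the chosen word, since two words representing $h$ differ by relators and \emph{(i)}, imposed at every base point, forces the corresponding paths to share their endpoints; boundedness of the paths makes $\gamma(h,\cdot)$ locally constant, hence continuous. A translation-equivariance computation (the $w_1$-path from $\gamma(h_2,x)$ in $x$ is the $w_1$-path from $1_G$ in $\gamma(h_2,x)x$, translated) yields the cocycle identity $\gamma(h_1h_2,x)=\gamma(h_1,\gamma(h_2,x)x)\gamma(h_2,x)$, so $(X,\gamma)$ is a $G$-chart of $H$. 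Finally, $h\cdot_x g = \gamma(h,gx)g$ equals $g$ exactly when the $w$-path for $h$ from $g$ returns to $g$, which by \emph{(ii)} happens only for $h=1_H$; thus every induced action $H\overset{x}{\curvearrowright} G$ is free and $(X,\gamma)$ is a free chart.

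I expect the main obstacle to be ensuring freeness at \emph{every} point of $X$, not merely at $x_0$: passing to limits of translates of $x_0$ can create ``cycles'' exactly as in the non-free snake shift $X_{\texttt{snake}}$, so a naive orbit-closure construction would fail. The remedy is to build freeness directly into $X$ through condition \emph{(ii)}, and the observation that legitimizes this is that ``the $w$-path from $g$ returns to $g$'' is clopen because displacements are uniformly bounded by $W$. This same uniform boundedness is what lets both the relation conditions \emph{(i)} and the cycle-forbidding conditions \emph{(ii)} be imposed while keeping $X$ closed; note that this argument never requires $H$ to be finitely presented, since $X$ is cut out by infinitely many local rules and need not be a subshift of finite type.
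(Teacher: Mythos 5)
Your proof is correct, and both halves track the paper's strategy closely in their core mechanics: the chart-to-translation-like direction is the same argument as the paper's (freeness of $H \overset{x}{\curvearrowright} G$ plus compactness of $X$ and continuity of $\gamma$ forcing the displacement sets $\gamma(\{h\}\times X)$ to be finite), and for the converse you use the same encoding as the paper: the alphabet of displacement functions ($W^S$, the paper's $F^S$), the distinguished configuration $x_0(g)(s) = (s\cdot g)g^{-1}$, and a cocycle defined by following recorded displacements along words. Where you genuinely diverge is in the choice of subshift. The paper takes $X$ to be the orbit closure $\overline{\{gx_0\}_{g \in G}}$, defines $\gamma$ on generators by reading the symbol at $1_G$, extends it via the cocycle equation, and disposes of well-definedness and freeness at limit points with a one-line appeal to compactness; you instead take $X$ to be the set of \emph{all} configurations satisfying your path conditions \emph{(i)} and \emph{(ii)}, so that well-definedness of $\gamma$ and freeness of every induced action hold by construction, and the only thing left to check is non-emptiness, which $x_0$ supplies. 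Your $X$ contains the paper's, both constructions produce subshifts that need not be of finite type, and both make transparent that finite presentability of $H$ plays no role here (it enters only in \Cref{proposition_all_we_need_for_charts}, where one wants the chart to be an SFT). Your route costs a little more writing but makes explicit the verification that the paper compresses into ``it follows from compactness''; that is a fair trade.

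One side remark of yours is false, and it concerns exactly the point where the two proofs differ: a ``naive orbit-closure construction'' does \emph{not} fail --- it is the paper's proof, and it works for the very reason you identify. Because displacements are uniformly bounded, for each fixed word $w$ and base point $g$ the condition ``the $w$-path from $g$ returns to $g$'' is clopen, and so is its negation. Every translate $gx_0$ induces an action conjugate (by right translation) to the given translation-like action, hence satisfies \emph{(i)} and \emph{(ii)}; since these are closed conditions, every limit of translates satisfies them as well. Thus the orbit closure already sits inside your $X$, its cocycle extension is well defined, and the induced action at each of its points is free. The snake-shift analogy points in the opposite direction from the one you draw: cycles occur in $X_{\texttt{snake}}$ because it is defined by \emph{finitely many local rules}, which cannot exclude cycles of all lengths, not because cycles can arise as limits of cycle-free configurations --- indeed the cycle-free configurations form the closed, shift-invariant set $X^{\texttt{free}}_{\texttt{snake}}$ of \Cref{example_snake}.
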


\begin{proof}
	Fix a finite set $S$ of generators of $H$. Suppose there exists a translation-like action $H \curvearrowright G$. As the action is bounded and $S$ is finite, we have that the set $F =\{ f \in G \mid (s\cdot g)  = fg \mbox{ for } s\in S, g \in G \}$ is finite. Consider the alphabet $\Sigma = F^S$ and the configuration $x \colon G \to \Sigma$ such that $(x(g))(s) = f \in F$ if and only if $s\cdot g = fg$. Let $X = \overline{\bigcup_{g \in G}\{ gx \}}$ be the orbit closure of $x$. By definition $X$ is a $G$-subshift. For $y \in X$, let $\gamma(s,y) = (y(1_G))(s)$ and extend $\gamma$ to $H \times X$ through the cocycle equation. It is clear that $\gamma$ is continuous. By definition, we have that $s \cdot_x g = \gamma(gx,s)g = (x(g))(s) = (s \cdot g) g^{-1}g = s \cdot g$. In other words, the action $H \overset{x}{\curvearrowright} G$  coincides with $H \curvearrowright G$ and hence it's free. It follows from compactness that the same holds for any $y \in X$ and thus $(X,\gamma)$ is a free $G$-chart of $H$.
	
	Conversely, suppose there exists a free $G$-chart $(X,\gamma)$ of $H$ and let $x \in X$. By definition, the action $H \overset{x}{\curvearrowright} G$ is free. Let $h \in G$, the restriction of $\gamma$ to $\{h\} \times X$ takes finitely many values and depends only on finitely many coordinates of $x \in X$. It follows that $(h \cdot_x g)g^{-1} = \gamma(h,gx)gg^{-1} = \gamma(h,gx)$ takes only finitely many values and hence $H \overset{x}{\curvearrowright} G$ is bounded.
\end{proof}

In other words, the least we can require if we want a free $G$-chart of $H$ is the existence of a translation-like action of $H$ on $G$. In what follows we shall give further conditions under which one can always find a free $G$-chart of $H$ given by a $G$-SFT. The following proof is essentially contained in the work of Jeandel~\cite[Section 2]{Jeandel2015}. 

\begin{proposition}\label{proposition_all_we_need_for_charts}
	Let $H,G$ be finitely generated groups such that:
	
	\begin{enumerate}
		\item $H$ admits a translation-like action on $G$.
		\item $H$ is finitely presented.
		\item There exists a non-empty $H$-SFT for which the $H$-action is free.
	\end{enumerate}
	Then there exists a free $G$-chart $(X,\gamma)$ of $H$ such that $X$ is a non-empty $G$-SFT.
\end{proposition}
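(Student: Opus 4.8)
The plan is to realise $X$ as a $G$-SFT built from two superimposed layers: a \emph{cocycle layer}, whose local rules encode a well-defined (but a priori not free) action $H \overset{x}{\curvearrowright} G$, and a \emph{freeness layer}, which pulls back the free $H$-SFT from hypothesis (3) along this action in order to discard every configuration whose induced action is not free. Fix a finite symmetric generating set $S$ of $H$ together with a finite presentation $H = \langle S \mid R \rangle$, which exists by hypothesis (2). By hypothesis (1) there is a translation-like action $H \curvearrowright G$; as it is bounded and $S$ is finite, the set $F \isdef \{ (s\cdot g)g^{-1} \mid s\in S,\ g\in G \}$ is a finite subset of $G$. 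Let $W \subset A^H$ be a non-empty $H$-SFT on which $H$ acts freely, with forbidden patterns supported on a fixed $F_W \Subset H$.

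For the cocycle layer I would take the alphabet $F^S$, so that the symbol at position $g$ records the intended values $\gamma(s,gx)$ for $s\in S$; the cocycle is then extended to all of $H$ through the cocycle equation, giving the induced action $h \cdot_x g = \gamma(h,gx)g$. This extension is consistent precisely when every relator acts trivially, that is $r \cdot_x g = g$ for all $g \in G$ and $r \in R$. Since each relator is a word of bounded length and all cocycle values lie in the finite set $F$, tracing a relator out of $g$ never leaves a bounded neighbourhood of $g$; hence ``$r$ acts trivially at $g$'' depends only on the restriction of $x$ to a fixed finite window, and forbidding its failure amounts to a finite list of patterns. The cocycle layer is therefore a $G$-SFT, and here hypothesis (2) is what keeps the relation constraints finite.

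Onto this I would superimpose the freeness layer over the alphabet $A$, imposing for every $g \in G$ the requirement $\pi_{x,g}(a) \in W$. Because $W$ is an SFT with supports in $F_W$ and the positions $\{ h \cdot_x g \mid h \in F_W\}$ are determined by a bounded window of $x$ around $g$, this is again the avoidance of finitely many patterns, so the full system $X$ is a $G$-SFT and $\gamma$ is read off its cocycle layer. The crucial step---and the one I expect to be the real content---is that this layer forces freeness: if $h_0 \cdot_x g_0 = g_0$ for some $h_0 \neq 1_H$, then $w \isdef \pi_{x,g_0}(a)$ lies in $W$ and satisfies $(h_0 w)(h) = w(h h_0) = a(h \cdot_x (h_0 \cdot_x g_0)) = a(h \cdot_x g_0) = w(h)$, so $h_0 w = w$, contradicting the freeness of $H \curvearrowright W$. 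Thus every configuration of $X$ induces a free action and $(X,\gamma)$ is a free $G$-chart; this is exactly where hypothesis (3) enters.

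It remains to guarantee $X \neq \varnothing$, which is where hypothesis (1) is used a second time. Feeding the translation-like action into the cocycle layer produces a configuration $x_0$ satisfying all relator constraints, because it is a genuine free $H$-action on $G$. Its induced action partitions $G$ into free $H$-orbits, and on each orbit I would deposit an independent shifted copy of a fixed $w \in W$; the resulting layer $a$ satisfies $\pi_{x_0,g}(a) \in W$ for every $g$ since $W$ is shift-invariant, and hence $(x_0,a) \in X$. Therefore $X$ is a non-empty $G$-SFT and $(X,\gamma)$ is the sought free $G$-chart of $H$.
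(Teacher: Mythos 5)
Your proposal is correct and follows essentially the same route as the paper: encode the cocycle on the alphabet $F^S$ with relator constraints imposed as finitely many forbidden patterns, superimpose the free $H$-SFT via the chart embedding to force freeness of every induced action (with the identical contradiction argument $h_0\pi_{x,g}(a)=\pi_{x,h_0\cdot_x g}(a)=\pi_{x,g}(a)$), and use the translation-like action to build a configuration witnessing non-emptiness. The only point the paper makes explicit that you gloss over is that, besides the relators in $R$, one must also impose the inverse-consistency conditions (that $ss^{-1}$ and $s^{-1}s$ act trivially, enforced in the paper by the intermediate SFT $Y$) so that the cocycle descends from the free monoid $S^*$ to $H$; this is handled by the same kind of finite pattern constraints and does not affect the argument.
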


\begin{proof}
	The first part of the proof is the same as in the last proposition, let $H\curvearrowright G$ be the translation-like action and suppose $\langle S \mid R \subset S^*\rangle$ is a finite presentation of $H$ where $S = S^{-1}$. By definition, the set  $F =\{ f \in G \mid (s\cdot g)  = fg \mbox{ for } s\in S, g \in G \}$ is finite. Consider the alphabet $\Sigma = F^S$ of all functions from $S$ to $F$ and let $\gamma\colon S^* \times \Sigma^G \to G$ be the map given by $\gamma(s,x)= (x(1_G))(s)$ for $s \in S$ and extended to the free monoid $S^*$ by the condition\[ \gamma(s_1s_2,x) =  \gamma(s_1,\gamma(s_2,x)x)\cdot \gamma(s_2,x) \mbox{ for every $s_1,s_2$ in $S^*$}.\]
	
	Let us first consider the subshift $Y \subset \Sigma^G$ such that for every $s \in S$ and $g \in G$ we have $(y(g))(s) = f$ then $(y(fg))(s^{-1})= f^{-1}$. This is clearly a subshift of finite type. Let us note that for $y \in Y$, $g \in G$ and $s \in S$ we have,\begin{align*}\gamma(s^{-1}s,gy) & =  \gamma(s^{-1},\gamma(s,gy)gy)\cdot \gamma(s,gy)\\
	& = \gamma(s^{-1},[(y(g))(s)]gy) \cdot (y(g))(s)\\
	& = y([(y(g))(s)](s^{-1}) \cdot (y(g))(s) = 1_G.
	\end{align*}
	The same holds for $\gamma(ss^{-1},gy)$. By a similar argument, it can be shown that if $w \in S^*$ is a word that can be freely reduced to the identity, then $\gamma(w,gy)=1_G$ for every $g \in G$. In other words, $(Y,\gamma)$ codes the free group on $S$ generators.
	
	Let us define $X \subset Y$ as the set of all configurations $x \in Y$ such that whenever $s_1s_2\dots s_{n-1}s_n \in R$ then for every $g \in G$, if we define $f_1 = (y(g))(s_n)$, $f_2 = (y(f_1g))(s_{n-1})$ and for every $k \leq n$,
	 \[f_k = (y(f_{k-1}\dots f_1g))(s_{n+1-k}).\]  Then we have $f_nf_{n-1}\dots f_1 = 1_G$. As $R$ is finite, these conditions can be imposed by forbidding patterns with support bounded by $F^{n}$. In other words, $X$ is also a $G$-subshift of finite type. Again, by the previous calculation, we obtain that for every $w \in R$ and $g \in G$ we have $\gamma(w,gx) = 1_G$ Moreover, as every word which represents $1_G$ in $G$ can be obtained by freely conjugating and concatenating words in $R$, we have that any word $w \in S^*$ which represents the identity satisfies $\gamma(w,gx) = 1_G$. In other words, $(X,\gamma)$ codes a $G$-chart of $H$.
	
	It is not true that $(X,\gamma)$ is free. In fact, the configuration such that $(x(g))(s) = 1_G$ belongs to $X$. However, the configuration $\bar{x}\in X$ defined using the free action $H \curvearrowright G$ by $(\bar{x}(g))(s) = (s \cdot g)g^{-1}$ satisfies that $H \overset{x}{\curvearrowright} G$ = $H \curvearrowright G$. 
	
	By hypothesis, there exists an $H$-subshift $Z$ on which $H$ acts freely. Let us consider $Z_{\gamma}[X]$. By~\Cref{remark_chart_having_one_free_action_is_good} we have that $Z_{\gamma}[X]$ is non-empty. Let $\widehat{\gamma} \colon H \times  Z_{\gamma}[X] \to G$ be the map defined by $\widehat{\gamma}(h,(z,x)) = \gamma(h,x)$. We claim the $G$-chart $(Z_{\gamma}[X], \widehat{\gamma})$ of $H$ is free.
	
	Indeed, if it is not free, there is $(z,x) \in Z_{\gamma}[X]$ and $h\neq 1_H$ such that $h \cdot_{(z,x)} g = g$. Equivalently, such that $\gamma(h,gx) = 1_G$ or $h \cdot_x g = g$. Hence, we would have that \[h\pi_{x,g}(z) = \pi_{x, h \cdot_{x} g}(z) = \pi_{x,g}(z).\]
	
	As $\pi_{x,g}(z) \in Z$, this gives a configuration for which the shift does not act freely, which contradicts the assumption on $Z$.\end{proof}

Let us gather all our results in a single theorem for further reference.

\begin{theorem}\label{theorem_HG} Let $G,H$ be finitely generated amenable groups. Suppose that 
	\begin{enumerate}
		\item $H$ admits a translation-like action on $G$.
		\item $H$ is finitely presented.
		\item There exists a non-empty $H$-SFT for which the $H$-action is free.
	\end{enumerate}
	Then, for every $\varepsilon >0$ there exists a $G$-SFT $X$ such that $h_{top}(G\curvearrowright X) < \varepsilon$ and \[h_{top}(G\curvearrowright X)+ \entsft(H) \subset \entsft(G).\]
\end{theorem}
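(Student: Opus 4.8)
The plan is to assemble the machinery developed across the three preceding subsections; no genuinely new argument is required, only careful bookkeeping of which hypothesis feeds into which result. The three numbered hypotheses are exactly those of \Cref{proposition_all_we_need_for_charts}, so my first step is to invoke that proposition to produce a free $G$-chart $(X_0,\gamma_0)$ of $H$ for which $X_0$ is a non-empty $G$-SFT. This is the step carrying the geometric content: the translation-like action supplies the candidate cocycle (bounded, hence with finite range over the generating set $S$), finite presentability lets us enforce the relators of $H$ by forbidding only finitely many patterns so that the chart is carried by an SFT, and the free $H$-SFT furnished by hypothesis (3) is used, via \Cref{remark_chart_having_one_free_action_is_good}, to upgrade the single free configuration $\bar{x}$ into a chart that is free at every point of a non-empty system.

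Next, I would fix $\varepsilon>0$. Having at hand one free $G$-chart of $H$ carried by a $G$-SFT, I would apply \Cref{corollary_reducing_chart_entropy} with $\varepsilon/2$ in place of $\varepsilon$ to obtain a second free $G$-chart $(X,\gamma)$ of $H$, again with $X$ a $G$-SFT, but now with $\htop(G\curvearrowright X)\le \varepsilon/2 < \varepsilon$; the passage to $\varepsilon/2$ is simply to secure the strict inequality demanded by the statement. This is where amenability is first genuinely exercised: the entropy reduction rests on \Cref{theorem_tilings_forthewin}, whose proof relies on the existence of zero-entropy exact tilings with arbitrarily invariant tiles for countable amenable groups.

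Finally, with the low-entropy free $G$-chart $(X,\gamma)$ in hand, I would apply \Cref{corollary_realize_entropy}, which is the immediate consequence of the addition formula \Cref{theorem_addition_formula} together with \Cref{remark_SFTchart_SFT_is_SFT}, to conclude that $\htop(G\curvearrowright X)+\entsft(H)\subset\entsft(G)$. Since we have arranged $\htop(G\curvearrowright X)<\varepsilon$, this $X$ witnesses exactly the assertion of the theorem.

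The only point demanding real care, and the part I would expect to be the true obstacle were the earlier sections not already available, is the construction underlying \Cref{proposition_all_we_need_for_charts}: verifying that encoding the relators of $H$ over the alphabet $F^S$ yields a system that is simultaneously of finite type, carries a well-defined cocycle that correctly codes $H$ (so that words representing $1_H$ evaluate to $1_G$ under $\gamma$), and can be rendered free by overlaying the free $H$-SFT. Once that proposition and the entropy reduction \Cref{corollary_reducing_chart_entropy} are granted, the theorem is a short corollary, with the amenability of $G$ and $H$ entering only through the entropy statements (the addition formula and the quasitiling reduction) and not through the chart constructions themselves.
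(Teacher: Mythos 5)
Your proposal is correct and follows exactly the paper's own proof: invoke \Cref{proposition_all_we_need_for_charts} to get a free $G$-chart carried by a $G$-SFT, reduce its entropy via \Cref{corollary_reducing_chart_entropy}, and conclude with \Cref{corollary_realize_entropy}. Your use of $\varepsilon/2$ to secure the strict inequality is a minor (and sensible) refinement of the paper's phrasing, not a different argument.
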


\begin{proof}
	By~\Cref{proposition_all_we_need_for_charts} there exists a free $G$-chart $(X,\gamma)$ of $H$ such that $X$ is a $G$-SFT. Furthermore, by~\Cref{corollary_reducing_chart_entropy} we can choose it so that $h_{top}(G\curvearrowright X) < \varepsilon$. Finally, we conclude by applying~\Cref{corollary_realize_entropy}.
\end{proof}

\section{Characterization of entropies: the case $H = \ZZ^2$}\label{section_characterization_Z2}

The goal of this section is to exploit~\Cref{theorem_HG} for the case $H = \ZZ^2$. The interest on this particular case comes from the fact we already have a full characterization of the entropies of $\ZZ^2$-SFTs by~\Cref{theorem_HochmanTom}. Furthermore, $\ZZ^2 \cong \langle a,b \mid aba^{-1}b^{-1} \rangle$ is finitely presented, and there exist non-empty $\ZZ^2$-SFTs for which the $\ZZ^2$-action is free, for instance the Robinson tiling~\cite{Robinson1971}.

There is a single obstacle that stops us from getting a characterization for all groups on which $\ZZ^2$ acts translation-like: even if we can choose the entropy of the chart to be arbitrarily low, there is no guarantee that said entropy will be an upper semi-computable number. In what follows we shall show that this is indeed the case if $G$ is a finitely generated group with decidable word problem.

Given a set $S \subset G$ denote by $S^*$ the formal set of all finite words $s_1s_2\dots s_n \in S^*$. Also, for any such word in $S^*$ denote by $\underline{s_1s_2\dots s_n}$ the unique element of $G$ represented by it.

\begin{definition}
	Let $G$ be a finitely generated group and $S$ a finite set of generators. The \define{word problem} of $G$ is the set of all words over the alphabet $S$ which represent the identity of $G$. \[\texttt{WP}_S(G) =\{ w \in S^* \mid \underline{w} = 1_G  \}. \]
\end{definition}

We say that $G$ has \define{decidable word problem} if the language $\texttt{WP}_S(G)$ is decidable for some finite set of generators $S$. It can be shown that this notion is independent of the chosen set of generators and thus, modulo many-one equivalence, one can speak about the \define{word problem} $\texttt{WP}(G)$ of $G$ without making reference to a specific set of generators.

We shall also need to introduce the set of locally admissible patterns.

\begin{definition}
	Let $\Sigma$ be a finite alphabet and $\FF$ be a list of forbidden patterns which defines a subshift $X_{\FF}$. For $F\Subset G$ We say that $q \in \Sigma^F$ is in the set of \define{locally admissible patterns}  $L_{F}^{\texttt{loc}}(X_{\FF})$ if no patterns from $\FF$ appear in $q$, namely, $[q] \not\subset g[p]$ for every $g \in G$ and $p \in \FF$.
\end{definition}

\begin{lemma}\label{lemma_aproximalito}
	Let $G$ be a countable group and $X_{\FF} \subset \Sigma^G$ be a subshift defined by a set of forbidden patterns $\FF$. For any $F\Subset G$ there exists $K \Subset G$ such that $K \supset F$ and $p \in L_{F}(X)$ if and only if there exists $q \in L^{\texttt{loc}}_{K}(X)$ such that $q|_F = p$. 
\end{lemma}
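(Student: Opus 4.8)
The plan is to prove both implications separately, observing that the $(\Rightarrow)$ direction holds for \emph{every} $K \supseteq F$, so that only the $(\Leftarrow)$ direction forces a careful choice of $K$. The existence of a single good $K$ will come from a compactness argument combined with the finiteness of $\Sigma^F$.

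First I would record two elementary facts. The trivial inclusion: if $p \in L_F(X)$ and $x \in X$ satisfies $x|_F = p$, then for any $K \supseteq F$ the pattern $q = x|_K$ lies in $L^{\texttt{loc}}_K(X)$ and satisfies $q|_F = p$, since no forbidden pattern from $\FF$ appears in $x$ and hence none appears in $q$. Monotonicity of local admissibility under restriction: if $K \supseteq K'$ and $q \in L^{\texttt{loc}}_K(X)$, then $q|_{K'} \in L^{\texttt{loc}}_{K'}(X)$, because a forbidden pattern appearing in $q|_{K'}$ would be entirely supported inside $K' \subseteq K$ and would therefore already appear in $q$.

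The heart of the argument is the compactness statement that $p \in L_F(X)$ if and only if $p$ extends to a locally admissible pattern on \emph{every} finite $K \supseteq F$. To prove the nontrivial direction I would fix an exhausting sequence $F = K_0 \subseteq K_1 \subseteq \cdots$ of finite sets with $\bigcup_n K_n = G$ (available because $G$ is countable), choose for each $n$ a pattern $q_n \in L^{\texttt{loc}}_{K_n}(X)$ with $q_n|_F = p$, extend each $q_n$ arbitrarily to a configuration $x_n \in \Sigma^G$, and pass to a convergent subsequence $x_{n_j} \to x$ using compactness of $\Sigma^G$. Then $x|_F = p$, and I claim $x \in X_\FF$: if a forbidden pattern appeared in $x$ it would be supported on some finite $S \Subset G$, which lies inside $K_{n_j}$ and on which $x_{n_j}$ agrees with $x$ for $j$ large; but then the same forbidden pattern would appear in $q_{n_j} = x_{n_j}|_{K_{n_j}}$, contradicting its local admissibility. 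Hence $x \in X$ witnesses $p \in L_F(X)$.

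Finally I would upgrade ``for all $K$'' to ``for a single $K$'' using finiteness of $\Sigma^F$. For each of the finitely many $p \in \Sigma^F \setminus L_F(X)$, the compactness statement yields a finite $K_p \supseteq F$ admitting no locally admissible extension of $p$, and by the monotonicity above this failure persists on every larger window. Setting $K = F \cup \bigcup_{p \notin L_F(X)} K_p$, a finite set containing $F$, no pattern of $L^{\texttt{loc}}_K(X)$ restricts to any $p \notin L_F(X)$; this is exactly the $(\Leftarrow)$ direction, while $(\Rightarrow)$ already holds for this $K$. The one delicate point is the compactness step, specifically the observation that the event ``a forbidden pattern appears'' is detected on a finite window and is thus preserved in the limit while contradicting local admissibility at finite stages; the remainder is bookkeeping.
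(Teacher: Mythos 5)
Your proof is correct and follows essentially the same route as the paper: both show, via a compactness argument along an exhaustion of $G$, that each bad pattern $p \in \Sigma^F \setminus L_F(X)$ fails to admit a locally admissible extension beyond some finite stage, and then use the finiteness of $\Sigma^F$ to pick a single window $K$ handling all bad patterns simultaneously. If anything, your convergent-subsequence step is slightly more careful than the paper's, which asserts that the cylinders $[q_n]$ are nested even though the chosen extensions $q_n$ need not be compatible with one another; your extension-to-configurations-plus-subsequence argument closes that small gap cleanly.
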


\begin{proof}
	If $G$ is finite the result is obvious. Otherwise we may fix an enumeration $\{g_n\}_{n \in \NN}$ of $G$, let $F^n = F \cup \bigcup_{k \leq n}\{g_k\}$ and consider $p \in \Sigma^F \setminus L_{F}(X)$. We claim there must exist an integer $n(p)$ such that $q|_F \neq p$ for every $q \in  L^{\texttt{loc}}_{F^{n(p)}}(X)$. If this was not the case, we may choose for every $n$ a pattern $q^n \in L^{\texttt{loc}}_{F^{n}}(X)$ such that $q|_F = p$. As the sequence of $[q_n] \subset [p]$ is closed and nested, the intersection $Y = \bigcap_{n \in \NN} [q_n]$ is non-empty and $Y \subset [p]$, and any configuration $y \in Y$ satisfies that no forbidden patterns appear, hence $y \in X \cap [p]$ and thus $p \in L_F(X)$.
	
	As $\Sigma^F$ is finite we may define $N \isdef \max_{p \in \Sigma^F \setminus L_{F}(X)}n(p)$ and $K \isdef F^{N}$. By definition of $N$, we have that if $p \in \Sigma^F \setminus L_{F}(X)$ then $q|_F \neq p$ for every $q \in L^{\texttt{loc}}_{K}(X)$. Conversely, if $p \in L_F(x)$ there exists $x$ such that $x|_F = p$. Defining $q \isdef x|_{K}$ we have $q|_{F}=p$ and $q \in L_{K}(X)\subset L^{\texttt{loc}}_{K}(X)$.\end{proof}

In what follows we shall need to briefly introduce the notion of pattern codings and effectively closed subshifts in finitely generated groups. An introduction to this topic can be found on~\cite{ABS2017}.

\begin{definition}
	Let $G$ be a finitely generated group, $S$ a finite set of generators and $\Sigma$ an alphabet. A function $c\colon W \to \Sigma$ from a finite subset $W$ of $S^*$ is called a \define{pattern coding}. The cylinder defined by a pattern coding $c$ is given by
	\[ [c] = \bigcap_{w \in W} \underline{w}[c(w)].  \]
\end{definition}

In other words, a pattern coding is a coloring of a finite subset of the free monoid $S^*$. A set $\mathcal{C}$ of pattern codings defines a $G$-subshift $X_{\mathcal{C}}$ by setting \[X_{\mathcal{C}} = \Sigma^G \setminus \bigcup_{g \in G, c \in \mathcal{C}} g[c]. \]

We say that a $G$-subshift $X$ is \define{effectively closed} if there exists a recursively enumerable set of pattern codings $\mathcal{C}$ such that $X = X_{\mathcal{C}}$. Obviously, every $G$-SFT is effectively closed.

We shall need the following result.

\begin{lemma}[Lemma 2.3 of~\cite{ABS2017}]\label{lemma_ABS}
	Let $G$ be a finitely generated and recursively presented group. For every effectively closed subshift $X \subset \Sigma^G$ the maximal --for inclusion-- set of forbidden pattern codings that defines $X$ is recursively enumerable.
\end{lemma}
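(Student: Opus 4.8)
The plan is to reduce the statement to a single uniform semi-decidability question, and then settle that question by a dovetailing search whose correctness rests on a compactness argument. First I would identify the object concretely: I claim that the maximal set of forbidden pattern codings defining $X$ is exactly
\[ \mathcal{F}_{\max} = \{\, c \text{ a pattern coding} : [c] \cap X = \varnothing \,\}. \]
Adding a coding $c$ to a defining family leaves $X$ unchanged precisely when forbidding all translates of $c$ removes no point of $X$, which by $G$-invariance of $X$ is equivalent to $[c]\cap X=\varnothing$; and any such $c$ can always be adjoined. Since each $c'\in\mathcal{C}$ satisfies $g[c']\cap X=\varnothing$ for all $g$, we have $\mathcal{C}\subseteq\mathcal{F}_{\max}$ and hence $X_{\mathcal{F}_{\max}}=X_{\mathcal{C}}=X$. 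So the whole problem reduces to showing that the predicate $[c]\cap X=\varnothing$ is semi-decidable, uniformly in $c$.

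Next I would describe the enumerating procedure, working inside the free monoid $S^*$. A configuration on $G$ is a $\sim$-invariant colouring of $S^*$, where $w\sim w'$ means $\underline{w\,w'^{-1}}=1_G$; since $G$ is recursively presented, $\texttt{WP}_S(G)$ and hence $\sim$ are recursively enumerable, and since $X$ is effectively closed the defining family $\mathcal{C}$ is recursively enumerable. The algorithm dovetails over a finite window $V\Subset S^*$ containing the support of $c$ and a time bound $t$. Within time $t$ it (i) lists the confirmed identities $w\sim w'$ with $w,w'\in V$; (ii) for each forbidden coding enumerated so far and each translating word of bounded length, records those translates whose support has been confirmed to lie inside $\underline{V}$, together with the finitely many forbidden patterns they induce on $\underline{V}$; and (iii) decides the finite constraint satisfaction problem asking for an assignment $\underline{V}\to\Sigma$ that agrees with $c$, respects the confirmed identities, and avoids every recorded forbidden pattern. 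If at some stage this finite CSP is unsatisfiable, the algorithm outputs $c$.

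For correctness, one direction is immediate: if the CSP is ever unsatisfiable then $[c]\cap X=\varnothing$, because the restriction to $\underline{V}$ of any $x\in[c]\cap X$ would solve it. The converse is the compactness step, in the spirit of \Cref{lemma_aproximalito}: if the CSP remained satisfiable for every window and time bound, I would pass to the limit along an exhaustion $V_n\uparrow S^*$, use that every true identity and every forbidden pattern on the finite set $\underline{V_n}$ is eventually confirmed, and assemble the finite solutions via König's lemma in the compact space $\Sigma^G$ into a $\sim$-invariant colouring that extends $c$ and avoids all of $\mathcal{C}$, i.e.\ a point of $[c]\cap X$. The finiteness input that makes step (ii) and the CSP genuinely finite is that on $\underline{V}$ there are only finitely many patterns, hence only finitely many forbidden ones, each witnessed after finite time.

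The hard part will be precisely this interplay of two recursively enumerable but undecidable ingredients — the identifications coming from the word problem and the forbidden codings coming from $\mathcal{C}$ — since the contradiction certifying $[c]\cap X=\varnothing$ may surface only once enough identities and enough forbidden patterns have been confirmed, and I must ensure the compactness argument tolerates this staged revelation. I would handle it by carrying out the compactness step with the \emph{true} relation $\sim$ and the \emph{true} family $\mathcal{C}$, and separately observing that the finitely many facts actually used there are each confirmable in finite time, so that the dovetailing is guaranteed to reach the certifying window. A minor technical point to pin down is the left/right convention for translates of codings, so that ``the support lies in $\underline{V}$'' is interpreted consistently throughout.
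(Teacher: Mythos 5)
This lemma is not proved in the paper at all: it is imported as a black box from~\cite{ABS2017}, so there is no internal proof to compare against. Your argument is correct, and it is essentially the argument of the cited reference: identify the maximal defining set as $\{c \mid [c]\cap X = \varnothing\}$ (which is the right characterization, by $G$-invariance of $X$), and semi-decide the predicate $[c]\cap X=\varnothing$ by dovetailing the recursively enumerable word problem against the recursively enumerable family $\mathcal{C}$, with compactness of $\Sigma^G$ guaranteeing that a finite refutation exists whenever the predicate holds. The only point you should make fully explicit in a written version is that your finite constraint problems are \emph{cumulative} (a solution of the stage-$(n+1)$ problem restricts to a solution of the stage-$n$ problem), so that the sets of colourings solving them form a nested sequence of nonempty closed subsets of a compact space; this nestedness is exactly what makes your K\"onig/compactness step valid despite the staged revelation of word-problem facts and forbidden codings, and your closing paragraph shows you are aware of it.
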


\begin{proposition}\label{proposition_ECSubshift_has_USC_entropy}
	Let $G$ be a finitely generated amenable group with decidable word problem. For every effectively closed subshift $X \subset \Sigma^G$ the topological entropy $h_{\text{top}}(G \curvearrowright X)$ is upper semi-computable.
\end{proposition}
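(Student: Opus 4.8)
The plan is to exhibit a computable sequence of rational upper bounds for $h_{\text{top}}(G \curvearrowright X)$ that converges to it, built from the infimum formula~\eqref{eq_entropyforidiots} together with a two-parameter effective approximation of the language $L_F(X)$. Fix a recursively enumerable set of pattern codings $\mathcal{C} = \{c_1, c_2, \dots\}$ with $X = X_{\mathcal{C}}$, and for each $m$ let $\mathcal{C}_m = \{c_1,\dots,c_m\}$ and $X_m = X_{\mathcal{C}_m}$ be the $G$-SFT it defines. Since $\mathcal{C}_m$ increases to $\mathcal{C}$, the subshifts $X_m$ form a decreasing sequence of SFTs with $\bigcap_m X_m = X$, each containing $X$. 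For finite $F \subseteq K \Subset G$, write $L^K_F(X_m)$ for the set of restrictions $q|_F$ with $q \in L^{\texttt{loc}}_K(X_m)$, and set $a(F,K,m) = \frac{1}{|F|}\log\abs{L^K_F(X_m)}$.

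I claim each $a(F,K,m)$ is computable and bounds the entropy from above. For the bound, a configuration of $X$ restricted to $K$ contains no forbidden coding of $\mathcal{C}_m \subseteq \mathcal{C}$, so $L_F(X) \subseteq L^K_F(X_m)$, and hence by~\eqref{eq_entropyforidiots} one gets $a(F,K,m) \geq \frac{1}{|F|}\log\abs{L_F(X)} \geq h_{\text{top}}(G \curvearrowright X)$. For computability, decidability of $\texttt{WP}(G)$ lets one represent the finite sets $F,K$ by words and decide equality and membership of group elements; then for each of the finitely many $q \in \Sigma^K$ and each $c_i \in \mathcal{C}_m$ one decides whether $c_i$ occurs, by checking the finitely many translates whose support fits inside $K$. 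Thus $L^{\texttt{loc}}_K(X_m)$ is a decidable finite set, $\abs{L^K_F(X_m)}$ is computable, and $a(F,K,m)$ can be approximated from above by rationals to any prescribed accuracy.

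It then remains to show $\inf_{F,K,m} a(F,K,m) = h_{\text{top}}(G \curvearrowright X)$; the lower bound holds termwise, so I only need the matching upper bound. Given $\varepsilon > 0$, use~\eqref{eq_entropyforidiots} to fix $F \Subset G$ with $\frac{1}{|F|}\log\abs{L_F(X)} \leq h_{\text{top}}(G \curvearrowright X) + \varepsilon$. For each $p \in \Sigma^F \setminus L_F(X)$ the cylinder $[p]$ is compact and disjoint from $X = \bigcap_m X_m$, so the nested compact sets $[p] \cap X_m$ have empty intersection and therefore $[p] \cap X_{m(p)} = \varnothing$ for some $m(p)$. Taking $m^\star = \max_p m(p)$ (finite, as $\Sigma^F$ is finite) and using $X_{m^\star} \subseteq X_{m(p)}$ excludes every such $p$, giving $L_F(X_{m^\star}) = L_F(X)$. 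Applying~\Cref{lemma_aproximalito} to the SFT $X_{m^\star}$ yields $K \supseteq F$ with $L^K_F(X_{m^\star}) = L_F(X_{m^\star}) = L_F(X)$, whence $a(F,K,m^\star) = \frac{1}{|F|}\log\abs{L_F(X)} \leq h_{\text{top}}(G \curvearrowright X) + \varepsilon$.

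Finally, enumerate all triples $(F_i,K_i,m_i)$ with $F_i \subseteq K_i \Subset G$; on input $n$, for each $i \leq n$ compute a rational $\tilde a_i \geq a(F_i,K_i,m_i)$ with $\tilde a_i - a(F_i,K_i,m_i) < 1/n$, and output $q_n = \min_{i \leq n} \tilde a_i$. Then $q_n \geq h_{\text{top}}(G \curvearrowright X)$ for all $n$, and since arbitrarily good triples exist by the previous paragraph, $q_n \to h_{\text{top}}(G \curvearrowright X)$, which is exactly upper semi-computability. The step I expect to be the main obstacle is effectivity: neither $L_F(X)$ nor local admissibility with respect to the full recursively enumerable constraint set is decidable, because global admissibility is undecidable and $\mathcal{C}$ is only enumerable. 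The resolution is precisely the truncation to the \emph{finite} set $\mathcal{C}_m$, which turns the occurrence question into a finite check that the decidable word problem can perform, while the window enlargement supplied by~\Cref{lemma_aproximalito} guarantees that, in the limit, this double truncation loses no tightness.
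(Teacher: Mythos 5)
Your proof is correct, and its skeleton matches the paper's: both over-approximate $L_F(X)$ by counting patterns on a finite window that avoid finitely many of the forbidden codings (a finite check made effective by the decidable word problem), both obtain the one-sided inequality $\geq h_{\text{top}}(G \curvearrowright X)$ from the infimum formula \eqref{eq_entropyforidiots}, and both obtain convergence from \Cref{lemma_aproximalito}. The genuine difference is that you never invoke \Cref{lemma_ABS}. The paper's algorithm tests codings only at the identity of a ball $B_n$, and identity-only testing can only succeed against a translation-closed defining family; that is exactly why the paper needs the maximal recursively enumerable coding set $\mathcal{C}^*$ supplied by \Cref{lemma_ABS}. (With a non-maximal family, identity-only testing can overcount forever: for the golden mean $\ZZ$-shift presented by the single coding ``$11$ at the origin'', the paper's quantities would never drop below $\tfrac{1}{2}\log 3$, which exceeds the entropy.) You instead test occurrences at all translates that fit inside the window $K$ --- still a finite, computable check --- so the raw enumeration $\mathcal{C}$ coming directly from the definition of effectively closed suffices; the translation-closure you forgo is recovered non-effectively through the truncated SFTs $X_m \downarrow X$, the compactness argument giving $L_F(X_{m^\star}) = L_F(X)$, and an application of \Cref{lemma_aproximalito} to the SFT $X_{m^\star}$ rather than to $X$ itself. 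What each approach buys: yours is more self-contained (one fewer external ingredient and no detour through maximality), while the paper's single parameter $n$ (ball radius $=$ number of codings $=$ dyadic precision) produces a monotone non-increasing sequence $h_n$, whereas your dovetailing over triples $(F,K,m)$ produces a non-monotone sequence of running minima --- equally acceptable, since the paper's definition of upper semi-computability does not require monotonicity.
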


\begin{proof} 
	Let us fix a symmetric set $S$ of generators for $G$. We shall first define three algorithms $T_{\texttt{WP}},T_{\texttt{pat}},T_{\texttt{color}}$ which will be used in the proof.
	
	First, as $G$ has decidable word problem there is an algorithm $T_{\texttt{WP}}$ which on input $w \in S^*$ halts and accepts if and only if $\underline{w}=1_G$. 
	
	Second, as $X$ is effectively closed, by~\Cref{lemma_ABS} there exists a maximal recursively enumerable set of pattern codings $\C^*$ such that $X = X_{\C^*}$. We define $T_{\texttt{pat}}$ as the algorithm which on input $n \in \NN$ yields the list of the first $n$ pattern codings $[c_1,c_2,\dots,c_n]$ of $\C^*$.
	
	Finally, let us denote by $\equiv_{n}$ the equivalence relation on $\bigcup_{k \leq n}S^k$ defined by $u \equiv_{n} v$ if and only if $T_{\texttt{WP}}$ accepts $uv^{-1}$. Let $B_n \isdef \bigcup_{k \leq n}S^k / \equiv_{n}$. We define $T_{\texttt{color}}$ as the algorithm, which on input $n \in \NN$ computes the set of all functions $x\colon B_n \to \Sigma$ such that for every pattern coding $c_i \colon W_i \to \Sigma$ listed by $T_{\texttt{pat}}$ on input $n$ we have that either $W_i \setminus B_n \neq \emptyset$ or $x(w) \neq c(w)$ for at least one $w \in W_i$.
	
	In simpler words, $T_{\texttt{color}}$ enumerates all patterns over a representation of the ball of size $n$ of the Cayley graph of $G$ where the first $n$ forbidden pattern codings do not appear at the identity.
	
	Now we construct an algorithm $T_{\texttt{ent}}$ which on input $n$ outputs a rational number $h_n$ as follows. First apply algorithm $T_{\texttt{color}}$ on input $n$ to produce a set $\{x_1,\dots, x_{M(n)}\}$ of colorings as above.  For each $A\subset B_n$ we define $L^A_n$ as the set of restrictions $\{x_1|_A,\dots, x_{M(n)}|_A\}$ to $A$. Let us define $h^A_n$ as the smallest rational number of the form $\frac{k}{2^n}$ such that \[\frac{1}{|A|}\log(|L^A_n|) < \frac{k}{2^n}.\]
	
	Finally, let us define $h_n \isdef \min_{A \subset B_n}\{h^A_n\}$. From the above definitions, it is clear that each $h_n$ can be computed in a finite number of steps with $T_{\texttt{ent}}$. We claim that the sequence $\{h_n\}_{n \in \NN}$ is non-increasing and that $\inf_{n \in \NN}h_n = h_{\text{top}}(G \curvearrowright X)$.
	
	Indeed, let $m > n$. Clearly for $A \subset B_n$ we have $L^A_m \subset L^A_n$, hence $|L^A_n| >  |L^A_m|$ hence we obtain
	
	\[h_{m} = \min_{A \subset B_{m}}\{h^A_{m}\} \leq \min_{A \subset B_n}\{h^A_{m}\} \leq \min_{A \subset B_n}\{h^A_{n}\} = h_n.  \]
	Hence the sequence $\{h_n\}_{n \in \NN}$ is non-increasing. It is clear from the definition that for every $n \in \NN$ such that $B_n \supset A$ we have $L^A_n \supset L_A(X)$, hence $h^A_{n} > \frac{1}{|A|}\log(|L^A_n|) \geq \frac{1}{|A|}\log(|L^A(X)|)$ and thus by~\Cref{eq_entropyforidiots}, \[h_n > \inf_{A \subset B_n}\frac{1}{|A|}\log(|L_A(X)|) \geq h_{\text{top}}(G \curvearrowright X).\]
		
	Similarly, by~\Cref{eq_entropyforidiots} for every $\varepsilon >0$ there exists a fixed finite $F \subset G$ such that $\frac{1}{|F|}\log(|L_F(X)|)-h_{\text{top}}(G \curvearrowright X) \leq \epsilon$. By~\Cref{lemma_aproximalito} there exists $K$ such that $p \in L_{F}(X)$ if and only if there exists $q \in L^{\texttt{loc}}_{K}(X)$ such that $q|_F = p$. Choose $N_1$ such that $B_{N_1} \supset K$ and $N_2$ so that all pattern codings of $\C^*$ whose support is contained in $K$ have already appeared. Let $N \geq \max(N_1,N_2)$. By definition we have that $L^{K}_{N}= L^{\texttt{loc}}_{K}(X)$ and thus $L^{F}_{N} = L_F(X)$, hence we have that \[h_N \leq h^F_N \leq \frac{1}{|F|}\log(|L_F(X)|)+\frac{1}{2^N} \leq h_{\text{top}}(G \curvearrowright X) + \epsilon+\frac{1}{2^N}. \]
	The last inequality shows that $\{h_n\}_{n \in \NN}$ converges to $h_{\text{top}}(G \curvearrowright X)$.\end{proof}

From this, we can obtain the following characterization.

\begin{theorem}\label{theorem_caract_entropies_G_z2_translation_like}
	Let $G$ be a finitely generated amenable group with decidable word problem which admits a translation-like action by $\ZZ^2$. The set of entropies attainable by $G$-subshifts of finite type is the set of non-negative upper semi-computable numbers.
\end{theorem}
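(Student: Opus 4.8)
The plan is to prove the equality by two inclusions. The inclusion of $\entsft(G)$ into the non-negative upper semi-computable numbers is immediate: every $G$-SFT is in particular effectively closed (a finite set of forbidden patterns is recursively enumerable), so \Cref{proposition_ECSubshift_has_USC_entropy} applies verbatim and shows that the entropy of any $G$-SFT is upper semi-computable, while entropy is always non-negative. This is one of the two places where the decidable word problem enters.

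For the reverse inclusion I would invoke \Cref{theorem_HG} with $H=\ZZ^2$. Its hypotheses hold: $\ZZ^2$ is finitely generated amenable, finitely presented by $\langle a,b\mid aba^{-1}b^{-1}\rangle$, carries a non-empty SFT on which it acts freely (the Robinson tiling), and acts translation-like on $G$ by assumption. Thus for every $\varepsilon>0$ there is a $G$-SFT $X$ with $c:=\htop(G\curvearrowright X)<\varepsilon$ and $c+\entsft(\ZZ^2)\subseteq\entsft(G)$; by \Cref{theorem_HochmanTom}, $\entsft(\ZZ^2)$ is exactly the set of non-negative upper semi-computable numbers. Hence, for a target non-negative upper semi-computable $r$, one wants to realize $r$ as $c+s$ by embedding into $X$ a $\ZZ^2$-SFT of entropy $s=r-c$; choosing $\varepsilon<r$ (and disposing of $r=0$ with the one-point SFT) makes $r-c>0$.

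The hard part is that this realizes $r$ \emph{exactly} only if $r-c$ is itself a $\ZZ^2$-SFT entropy, i.e. non-negative and upper semi-computable, and the chart constant $c$ genuinely obstructs this. One cannot simply take $c=0$: an SFT chart of $\ZZ^2$ has strictly positive entropy, since \Cref{theorem_tilings_forthewin} only pushes the entropy below $\varepsilon$ and the zero-entropy subsystems it produces are not of finite type. Worse, knowing merely that $c$ is upper semi-computable is not enough, because the difference of two upper semi-computable numbers need not be upper semi-computable (already for computable $r$ and non-computable $c$, where $r-c$ is only lower semi-computable). I therefore expect the crux to be upgrading the control on $c$: one must arrange charts of arbitrarily small \emph{computable} entropy, so that $r-c$ becomes upper semi-computable and \Cref{theorem_HochmanTom} then supplies the required $\ZZ^2$-SFT. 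It is precisely the algorithm furnished by the decidable word problem through \Cref{proposition_ECSubshift_has_USC_entropy} — a non-increasing rational sequence converging to $c$ — that should be leveraged to pin the total entropy down to exactly $r$ rather than only approximate it from above. Obtaining this exact realization, instead of a realization up to an unwanted positive additive constant, is the main obstacle.
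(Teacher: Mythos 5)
Your overall route is the paper's: the inclusion of $\entsft(G)$ into the set of non-negative upper semi-computable numbers via \Cref{proposition_ECSubshift_has_USC_entropy}, and the reverse inclusion via \Cref{theorem_HG} applied to $H=\ZZ^2$ together with \Cref{theorem_HochmanTom}, with the hypotheses verified exactly as the paper verifies them (finite presentation of $\ZZ^2$, the Robinson tiling, the assumed translation-like action). The first inclusion is complete and correct. The reverse inclusion, however, is not a proof as written: you correctly reduce it to writing a target $r$ as $c+s$ with $c=\htop(G\curvearrowright X)$ an admissible chart entropy and $s$ non-negative upper semi-computable, you correctly observe that this forces $r-c$ to be upper semi-computable and that this does \emph{not} follow from $c$ being upper semi-computable (your example is right: for computable $r$ and non-computable upper semi-computable $c<r$, the difference $r-c$ is only lower semi-computable), and then you stop, conjecturing that the remedy is to build charts of arbitrarily small \emph{computable} entropy. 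That construction is the missing content; nothing in your proposal, and nothing in \Cref{corollary_reducing_chart_entropy} (which only gives the upper bound $c<\varepsilon$), supplies it. Note also that the word-problem algorithm gives you the wrong half: it makes $c$ upper semi-computable for free, whereas what $r-c$ needs is \emph{lower} semi-computability of $c$. One smaller inaccuracy: your claim that an SFT chart of $\ZZ^2$ must have strictly positive entropy is unjustified --- when $\ZZ^2$ is an actual subgroup of $G$, the one-point chart of \Cref{example_obviouschart} is an SFT chart of entropy zero, and the theorem then follows at once; the difficulty is confined to the general translation-like case.

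You should know, though, that the paper does not perform the upgrade you anticipate either. Its proof ends with the single sentence: ``Noting that $0 \in \entsft(G)$ and that the set of upper semi-computable numbers is stable under addition, if we let $\varepsilon$ go to zero we obtain $\entsft(G) = \entsft(\ZZ^2)$.'' Stability under addition only yields $c_\varepsilon+\entsft(\ZZ^2)\subseteq\entsft(\ZZ^2)$, i.e., consistency with the upper bound, and letting $\varepsilon\to 0$ shows that $\entsft(G)$ contains points arbitrarily close to any target $r$; but $\entsft(G)$ is countable and membership in it is not a closed condition, so proximity does not yield exact realization. In other words, the paper's text does not visibly answer the precise question you isolate (why is $r-c_\varepsilon$ upper semi-computable for some admissible chart entropy $c_\varepsilon$?). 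So your proposal is incomplete, but it is incomplete at exactly the step where the paper's own argument is tersest and, on its face, insufficient; completing either text requires the extra ingredient you name (control of the chart entropy beyond a mere upper bound, e.g.\ computability) or some other mechanism for exact rather than approximate realization.
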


\begin{proof}
	By hypothesis there exists a translation-like action of $\ZZ^2$ on $G$. Therefore $\ZZ^2,G$ satisfy the hypothesis of~\Cref{theorem_HG} which means that for every $\varepsilon>0$ there exists a $G$-SFT $X$ such that $h_{top}(G\curvearrowright X) < \varepsilon$ and \[h_{top}(G\curvearrowright X)+ \entsft(\ZZ^2) \subset \entsft(G).\]
	
	Recall that by~\Cref{theorem_HochmanTom} $\entsft(\ZZ^2)$ is precisely the set of non-negative upper semi-computable real numbers. As $G$ has decidable word problem,~\Cref{proposition_ECSubshift_has_USC_entropy} implies that $\entsft(G) \subset \entsft(\ZZ^2)$. Noting that $0 \in \entsft(G)$ and that the set of upper semi-computable numbers is stable under addition, if we let $\varepsilon$ go to zero we obtain
	\[\entsft(G) = \entsft(\ZZ^2).\]
	Which is what we wanted to show\end{proof}

\section{Consequences}\label{section_consequences}

In the remainder of this section we shall make use of the following simple construction.

\begin{definition}
	Let $H \leq G$ be a subgroup, $\{0\}$ be the trivial $G$-subshift with one point and let the $H$-cocycle $\gamma\colon H \times \{0\} \to G$ be the canonical free $G$-chart of $H$ defined by $\gamma(h,0) = h$. 
	
	For an $H$-subshift $X$ denote by $X^{\uparrow G}$ the \define{free $G$-extension of $X$} defined by $X_{\gamma}[\{0\}]$. 
\end{definition}

\begin{proposition}\label{proposition_same_entropy_free_subshift}
	Let $G$ be a countable amenable group, $H \leq G$ and $X$ be an $H$-subshift. Then	\[h_{\text{top}}(G \curvearrowright X^{\uparrow G}) = h_{\text{top}}(H \curvearrowright X).\]
\end{proposition}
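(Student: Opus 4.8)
The plan is to recognize this statement as an immediate special case of the addition formula (\Cref{theorem_addition_formula}) applied to the canonical free $G$-chart of \Cref{example_obviouschart}. Recall that by definition $X^{\uparrow G} = X_{\gamma}[\{0\}]$, where $(\{0\},\gamma)$ is the $G$-chart of $H$ with $\gamma(h,0)=h$ and $\{0\}$ denotes the trivial one-point $G$-subshift. So the task reduces to feeding this particular chart into the addition formula and observing that the chart contributes no entropy.

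First I would verify that the hypotheses of \Cref{theorem_addition_formula} are satisfied. The group $G$ is countable amenable by assumption, and since $H \leq G$ it is itself countable and amenable, because amenability passes to subgroups. Moreover $(\{0\},\gamma)$ is a \emph{free} $G$-chart of $H$: as observed in \Cref{example_obviouschart}, the induced action is $h \cdot_0 g = \gamma(h,0)g = hg$ (the $G$-action on the single point being trivial), and this is free precisely because $H$ sits inside $G$, so $hg = g$ forces $h = 1_H$. Next I would compute the entropy of the chart system itself: since $\{0\}$ consists of a single configuration, $|L_F(\{0\})| = 1$ for every $F \Subset G$, whence $\htop(G \curvearrowright \{0\}) = 0$.

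Finally, applying the addition formula to the free chart $(\{0\},\gamma)$ and the $H$-subshift $X$ yields
\[ \htop(G \curvearrowright X^{\uparrow G}) = \htop(G \curvearrowright X_{\gamma}[\{0\}]) = \htop(H \curvearrowright X) + \htop(G \curvearrowright \{0\}) = \htop(H \curvearrowright X), \]
which is exactly the claimed identity. I do not expect any genuine obstacle here: the entire substance is contained in \Cref{theorem_addition_formula}, and the only auxiliary facts—amenability of the subgroup $H$ and the vanishing of the entropy of the one-point system—are routine. The point of isolating this proposition is purely to record the clean statement that free $G$-extensions preserve entropy, for use in the subsequent applications.
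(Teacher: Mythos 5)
Your proposal is correct and matches the paper's proof exactly: the paper likewise obtains the result by applying \Cref{theorem_addition_formula} to the canonical free $G$-chart $(\{0\},\gamma)$ of \Cref{example_obviouschart} and noting that $h_{\text{top}}(G \curvearrowright \{0\}) = 0$. Your additional verifications (amenability of $H$ as a subgroup, freeness of the chart, vanishing entropy of the one-point system) are routine facts the paper leaves implicit, so there is no substantive difference.
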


\begin{proof}
	By~\Cref{theorem_addition_formula} we have \[h_{\text{top}}(G \curvearrowright X^{\uparrow G}) = h_{\text{top}}(H \curvearrowright X) + h_{\text{top}}(G \curvearrowright \{0\}) = h_{\text{top}}(H \curvearrowright X).\]Which is what we wanted to show.\end{proof}

We shall also need the following result which relates the entropies of subshifts of finite type in a group to those of a finite index subgroup.

\begin{lemma}\label{proposition_virt_Z_has_perronentropies}
	Let $G$ be a countable amenable group and let $H \leq G$ be a finite index subgroup. Assume that $\entsft(H)$ is closed under division by positive integers. Then $\entsft(G) = \entsft(H)$.
\end{lemma}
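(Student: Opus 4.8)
The plan is to prove the two inclusions $\entsft(H) \subseteq \entsft(G)$ and $\entsft(G) \subseteq \entsft(H)$ separately. The first is immediate from the machinery already developed and does not even use the divisibility hypothesis; the second is where that hypothesis enters, via a re-encoding of $G$-SFTs as $H$-SFTs combined with the index formula for entropy.

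For $\entsft(H) \subseteq \entsft(G)$ I would take an arbitrary $H$-SFT $X$ and pass to its free $G$-extension $X^{\uparrow G}$. Since the trivial one-point system $\{0\}$ is a $G$-SFT and $X$ is an $H$-SFT, \Cref{remark_SFTchart_SFT_is_SFT} guarantees that $X^{\uparrow G}$ is a $G$-SFT, while \Cref{proposition_same_entropy_free_subshift} gives $\htop(G \curvearrowright X^{\uparrow G}) = \htop(H \curvearrowright X)$. Hence every value in $\entsft(H)$ is realized as the entropy of a $G$-SFT.

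For the reverse inclusion, let $n = [G:H]$ and fix left-coset representatives $g_1,\dots,g_n$ so that $G = \bigsqcup_{j} g_j H$. Since the shift action is $gx(h) = x(hg)$, the $H$-action on the index set $G$ is by right multiplication, whose orbits are exactly the left cosets; so the bijection $g_j h \mapsto (j,h)$ intertwines this action with the $H$-shift on the second coordinate, and the map $x \mapsto \tilde x$ defined by $\tilde x(h)_j = x(g_j h)$ is an $H$-equivariant homeomorphism from $\Sigma^G$ (with the $G$-action restricted to $H$) onto $(\Sigma^n)^H$. Under this map a $G$-SFT $X$ becomes an $H$-SFT $\tilde X$, because each finite forbidden support in $G$ pulls back to a finite forbidden support in $H$ spread across the $n$ layers. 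I then need the index formula
\[ \htop(H \curvearrowright X) = [G:H]\,\htop(G \curvearrowright X), \]
after which a $G$-SFT $X$ with $\htop(G \curvearrowright X) = r$ produces the $H$-SFT $\tilde X$ with $\htop(H \curvearrowright \tilde X) = nr$, so $nr \in \entsft(H)$; the hypothesis that $\entsft(H)$ is closed under division by positive integers then yields $r = nr/n \in \entsft(H)$.

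The main obstacle is establishing the index formula, and concretely the claim that if $\{E_k\}$ is a F\o lner sequence for $H$ then $F_k = \bigsqcup_j g_j E_k$ is a F\o lner sequence for $G$ with $|F_k| = n|E_k|$. To check this, for $K \Subset G$ I would write each $k g_j = g_{\sigma(k,j)}\,\eta(k,j)$ with $\eta(k,j) \in H$, collect the finitely many $\eta(k,j)$ into a set $K' \Subset H$, and bound $|K F_k \triangle F_k|$ coset-by-coset on each of the $n$ layers by $\sum_{\eta \in K'} |\eta E_k \triangle E_k|$, which tends to $0$ by the F\o lner property of $\{E_k\}$. Once $\{F_k\}$ is known to be F\o lner, the bijective correspondence $L_{E_k}(\tilde X) \leftrightarrow L_{F_k}(X)$ gives $\frac{1}{|E_k|}\log|L_{E_k}(\tilde X)| = \frac{n}{|F_k|}\log|L_{F_k}(X)|$, and passing to the limit (using that $\tilde X$ is $H$-conjugate to $X$) delivers the formula. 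This step is routine but is the one place demanding genuine care.
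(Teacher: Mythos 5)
Your proposal is correct and takes essentially the same route as the paper: the inclusion $\entsft(H) \subseteq \entsft(G)$ via the free $G$-extension, and the reverse inclusion by restricting the $G$-action of a $G$-SFT to $H$, conjugating it to an $H$-SFT over the alphabet $\Sigma^{[G:H]}$ indexed by left-coset representatives (your $\tilde{x}(h)_j = x(g_j h)$ is exactly the paper's $R$-higher power shift $X^{[R]}$), and then invoking the index formula $\htop(H \curvearrowright X) = [G:H]\,\htop(G \curvearrowright X)$ together with closure under division. The only difference is that the paper cites the index formula as a well-known property (and leaves the SFT re-encoding as an exercise with references), whereas you sketch a direct proof via the F\o lner sequence $F_k = \bigsqcup_j g_j E_k$ and the pattern bijection $L_{E_k}(\tilde{X}) \leftrightarrow L_{F_k}(X)$, which is a sound argument for the subshift case.
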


\begin{proof}
	For any $H$-SFT $X$ we can consider the $G$-SFT $X^{\uparrow}$. By~\Cref{proposition_same_entropy_free_subshift} we get $\entsft(H)\subset \entsft(G)$. 
	
	For the converse, let $Y \subset \Sigma^G$ be a $G$-SFT and consider $H \curvearrowright Y$ the restriction of the $G$ action on $Y$ to $H$. It is a well known property of topological entropy that $\frac{1}{[G:H]}\htop(H \curvearrowright Y) =  \htop(G \curvearrowright Y)$. It suffices to show that $H \curvearrowright Y$ is conjugated to an $H$-SFT. Indeed, as $\entsft(H)$ is closed under division by positive integers the above formula yields the result.
	
	Choose a set $R$ of left representatives of $G/H$ and define the $R$-higher power shift $X^{[R]}$ by \[X^{[R]} = \{ x \in (\Sigma^R)^H \mid \exists y \in Y, \mbox{ for every } r \in R, h \in H, (x(h))(r) = y(rh)    \}.    \]
	
	As $R$ is finite, it is clear that $X^{[R]}$ is closed and $H$-invariant and hence that it is an $H$-subshift. The function $\phi\colon X^{[R]} \to Y$ that sends $x \mapsto y$ by $\phi(x)(rh) = (x(h))(r)$ is clearly a continuous bijection. It is also $H$-equivariant:	\[ h'\phi(x)(rh) = \phi(x)(rhh') = (x(hh'))(r) = (h'x(h))(r) = \phi(h'x)(rh). \]
	Therefore $H \curvearrowright X^{[R]}$ is conjugated to $H \curvearrowright Y$. The construction of the forbidden patterns that show that $X^{[R]}$ is an $H$-SFT whenever $Y$ is a $G$-SFT is a simple exercise. The reader may find it in either~\cite[Definition 3.1]{CarrollPenland} or in~\cite[Proposition 9.3.33]{AubBarJea2018}.
\end{proof}

\begin{question}
	Is there any infinite and finitely generated amenable group $G$ for which $\entsft(G)$ is not closed under division by positive integers? 
\end{question}

\subsection{Polycyclic-by-finite groups}\label{subsec:poly}

The goal of this section is to give a full characterization of the set of real numbers attainable as entropies of subshifts of finite type on a polycyclic-by-finite group. In what follows we shall introduce polycyclic groups and state a few of their properties. A good reference is~\cite{Seg2005} or~\cite{DruKap2018book}.

A group $G$ is called \define{polycyclic} if there exists a finite sequence of subgroups \[G = N_1 \triangleright N_2 \triangleright \dots \triangleright N_{n} \triangleright N_{n+1} = \{1_G\}.\]
such that every quotient $N_{i}/N_{i+1}$ is cyclic. The number of $i$ such that $N_{i}/N_{i+1}$ is infinite does not depend on the choice of sequence and is thus a group invariant called the \define{Hirsch index} of $G$ and denoted by $h(G)$.

If we replace the condition that each $N_{i}/N_{i+1}$ is cyclic by the condition that each $N_{i}/N_{i+1}$ is the infinite cyclic group, we obtain the class of \define{poly-$C_{\infty}$} groups. There are polycyclic groups which are not poly-$C_{\infty}$, for instance any cyclic finite group. However, they are very close in the following sense. A proof can be found in either of the two references mentioned above.

\begin{proposition}
	The following are equivalent:
	\begin{enumerate}
		\item $G$ is virtually polycyclic.
		\item $G$ is polycyclic-by-finite.
		\item $G$ is poly-$C_{\infty}$-by-finite.
	\end{enumerate}
\end{proposition}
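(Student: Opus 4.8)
The plan is to isolate the single nontrivial input and derive all three equivalences from it by soft arguments. First I would record a general principle: if $\mathcal{X}$ is a class of groups closed under passing to subgroups, then a group $G$ is \emph{virtually $\mathcal{X}$} (it has a finite-index subgroup lying in $\mathcal{X}$) if and only if it is \emph{$\mathcal{X}$-by-finite} (it has a normal subgroup in $\mathcal{X}$ with finite quotient). The reverse implication is immediate, since a normal finite-index subgroup is in particular finite-index. For the forward one, given $H \leq G$ of finite index with $H \in \mathcal{X}$, the normal core $N = \bigcap_{g \in G} gHg^{-1}$ is normal, is an intersection of the finitely many conjugates of $H$ (hence finite-index), and is contained in $H$, so $N \in \mathcal{X}$ by closure under subgroups. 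I would then check that both relevant classes are subgroup-closed directly from the defining series: if $U \leq G$ and $G = N_1 \triangleright \cdots \triangleright N_{n+1} = \{1_G\}$ has cyclic (resp. infinite cyclic) factors, then the intersected series $U = U\cap N_1 \triangleright \cdots \triangleright U\cap N_{n+1} = \{1_G\}$ has factors $(U\cap N_i)/(U\cap N_{i+1})$ embedding into $N_i/N_{i+1}$, hence cyclic (resp. trivial or infinite cyclic); discarding trivial factors shows $U$ is polycyclic (resp. poly-$C_{\infty}$).

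With the principle in hand the equivalences reduce to one statement. Applying it with $\mathcal{X}$ the class of polycyclic groups yields $(1)\Leftrightarrow(2)$ at once, and since every poly-$C_{\infty}$ group is polycyclic, $(3)\Rightarrow(2)$ (and hence $(3)\Rightarrow(1)$) is trivial. Thus the whole content lies in $(2)\Rightarrow(3)$, and I claim this follows from the \textbf{Key Lemma}: every polycyclic group contains a poly-$C_{\infty}$ subgroup of finite index. Indeed, if $G$ is polycyclic-by-finite, choose a normal polycyclic $N \triangleleft G$ with $G/N$ finite; applying the Key Lemma to $N$ gives a poly-$C_{\infty}$ subgroup $P \leq N$ of finite index in $N$, whence $[G:P] = [G:N]\,[N:P] < \infty$, so $G$ is virtually poly-$C_{\infty}$. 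By the general principle (poly-$C_{\infty}$ being subgroup-closed) this means $G$ is poly-$C_{\infty}$-by-finite, which is exactly $(3)$.

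Finally I would prove the Key Lemma, which I expect to be the main obstacle. The route I favour goes through linearity: by the Auslander--Swan theorem every polycyclic group embeds into $GL_d(\mathbb{Z})$ for some $d$, and by Minkowski's lemma the congruence subgroup $\ker(GL_d(\mathbb{Z}) \to GL_d(\mathbb{Z}/3\mathbb{Z}))$ is torsion-free and of finite index. Identifying $G$ with its image and setting $T = G \cap \ker(GL_d(\mathbb{Z}) \to GL_d(\mathbb{Z}/3\mathbb{Z}))$ produces a torsion-free finite-index subgroup of $G$, which is again polycyclic as a subgroup of a polycyclic group. The remaining---and genuinely nontrivial---step is the classical fact that a torsion-free polycyclic group is poly-$C_{\infty}$. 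The naive attempt to read a poly-$C_{\infty}$ series off a given polycyclic series fails because quotients of torsion-free groups need not be torsion-free: the bottom infinite-cyclic factor can be peeled off, but the quotient may reacquire torsion. This is precisely where one must invoke the finer structure of polycyclic groups (through their linear representation, or via triviality of the finite radical together with a refinement of the series), and here I would cite the standard treatments in~\cite{Seg2005} or~\cite{DruKap2018book} rather than reproduce the argument. Assembling the Key Lemma with the two soft implications closes the cycle $(1)\Leftrightarrow(2)\Leftrightarrow(3)$.
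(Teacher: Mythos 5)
Your soft reductions are all correct and match the standard treatment: the normal-core argument showing that ``virtually $\mathcal{X}$'' and ``$\mathcal{X}$-by-finite'' coincide for subgroup-closed classes, the second-isomorphism-theorem check that polycyclic and poly-$C_{\infty}$ are subgroup-closed, and the reduction of all three equivalences to your Key Lemma. (For what it is worth, the paper proves none of this itself; it defers entirely to~\cite{Seg2005,DruKap2018book}, so a proof by citation was a legitimate endgame.) The genuine gap is your final step: the ``classical fact'' that a torsion-free polycyclic group is poly-$C_{\infty}$ is \emph{false}. Any poly-$C_{\infty}$ group surjects onto $\mathbb{Z}$, since the top factor $N_1/N_2 = G/N_2$ of the series is infinite cyclic; hence its abelianization is infinite. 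But the Hantzsche--Wendt group (also called Promislow's group or Passman's fours group)
\[ \Gamma = \langle x,y \mid x^{-1}y^{2}x = y^{-2},\ y^{-1}x^{2}y = x^{-2} \rangle \]
is torsion-free (it is a Bieberbach group, containing $\mathbb{Z}^3$ as a normal subgroup of index $4$ with quotient $(\mathbb{Z}/2\mathbb{Z})^2$) and polycyclic (refine the series $\Gamma \triangleright \mathbb{Z}^3 \triangleright \mathbb{Z}^2 \triangleright \mathbb{Z} \triangleright \{1\}$, inserting one intermediate subgroup above $\mathbb{Z}^3$), yet abelianizing the two relations gives $y^4 = x^4 = 1$, so $\Gamma^{\mathrm{ab}} \cong \mathbb{Z}/4\mathbb{Z} \times \mathbb{Z}/4\mathbb{Z}$ is finite. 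Thus $\Gamma$ admits no epimorphism onto $\mathbb{Z}$ and is not poly-$C_{\infty}$. Your Auslander--Swan plus Minkowski step is correct as far as it goes, but it only produces a torsion-free finite-index subgroup and then stalls: torsion-freeness is necessary but not sufficient for poly-$C_{\infty}$, and no refinement of the series can rescue this, because the obstruction is the finiteness of the abelianization, not a defective choice of series.

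The statement you should have invoked is Hirsch's theorem: every polycyclic group contains a normal poly-$C_{\infty}$ subgroup of finite index. Its standard proof is an induction on the length of a cyclic series (this is where the real content of $(2)\Rightarrow(3)$ lives in~\cite{Seg2005}), with no detour through torsion-freeness or linearity. But that statement is your Key Lemma verbatim, so once it is cited, the entire $GL_d(\mathbb{Z})$ apparatus is dead weight: you have reduced the Key Lemma to a cited statement that is strictly weaker than what you need and, as stated, false. Notably, you half-detected the problem yourself when you observed that peeling off infinite-cyclic factors fails because quotients of torsion-free groups reacquire torsion; that observation is precisely the shadow of the Hantzsche--Wendt phenomenon, and it should have signalled that no argument taking only torsion-freeness as input can close the gap.
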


In particular, as every short sequence $1 \to N \to G \to \ZZ \to 1$ splits, the last proposition means that any virtually polycyclic group can be written as a series $G = N_0 \triangleright N_1 \triangleright \dots \triangleright N_{n} \triangleright N_{n+1} = \{1_G\}$ such that for $i \geq 1$ we have $N_{i} = N_{i+1} \rtimes \ZZ$ and $G$ is virtually $N_1$. Moreover if this is the case then $h(G)=n$.

\begin{theorem}\label{theorem_polycyclic}
	Let $G$ be a virtually polycyclic group. Then 
	\begin{enumerate}
		\item If $h(G)=0$ then $\entsft(G) = \{ \frac{1}{|G|}\log(n) \mid n \in \ZZ_{+}\}$.
		\item If $h(G)=1$ then $\entsft(G) = \entsft(\ZZ)$, the set of non-negative rational multiples of logarithms of Perron eigenvalues.
		\item If $h(G)\geq 2$ then $\entsft(G) = \entsft(\ZZ^2)$, the set of non-negative upper semi-computable numbers.
	\end{enumerate}
\end{theorem}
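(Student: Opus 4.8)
The plan is to split into three cases according to the Hirsch index $h(G)$, reducing each to a result already established above. Throughout I would invoke three standard facts about a virtually polycyclic group $G$: it is finitely generated and amenable (being virtually solvable); it has decidable word problem (it is linear over $\ZZ$, hence residually finite, and finitely presented, so the word problem is solvable); and, by the structure proposition preceding the statement together with the splitting $1\to N\to G\to\ZZ\to 1$, it possesses a finite-index poly-$C_\infty$ subgroup $G'$ with $h(G')=h(G)$. These ingredients let me feed $G$ into the appropriate earlier theorem in each case.

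For $h(G)=0$ the group $G$ is finite, and a constant F\o lner sequence $F_n=G$ gives $\htop(G\curvearrowright X)=\tfrac{1}{|G|}\log|L_G(X)|=\tfrac{1}{|G|}\log|X|$ for every nonempty $G$-subshift $X\subseteq\Sigma^G$; since $\Sigma^G$ is finite, every nonempty $G$-invariant subset is closed and is trivially an SFT (forbid the finitely many full-support patterns outside $X$). Taking $X$ to consist of the $n$ constant configurations over an alphabet of size $n$ realizes every value $\tfrac{1}{|G|}\log(n)$, and the displayed formula shows these are the only attainable values. For $h(G)=1$ the group $G$ is virtually $\ZZ$, so it contains a finite-index copy of $\ZZ$; here I would simply apply \Cref{proposition_virt_Z_has_perronentropies}, the only point to check being that $\entsft(\ZZ)$ is closed under division by positive integers. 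This is immediate from Lind's description: dividing a non-negative rational multiple of $\log\lambda$ (with $\lambda$ Perron) by $m$ again yields a non-negative rational multiple of $\log\lambda$. The lemma then gives $\entsft(G)=\entsft(\ZZ)$.

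The case $h(G)\geq 2$ is the heart of the argument, and I would prove it by verifying the hypotheses of \Cref{theorem_caract_entropies_G_z2_translation_like}. Given the standard facts above, the only nontrivial hypothesis is that $\ZZ^2$ acts translation-like on $G$. I would establish this by exhibiting $\ZZ^2$ as a subgroup of $G$, since right translation by any subgroup is a free and bounded action (equivalently, via \Cref{example_obviouschart} and the characterization of free charts by translation-like actions). To produce the subgroup, pass to the finite-index poly-$C_\infty$ subgroup $G'$ and take the second term $G_2$ of a strong polycyclic series $\{1\}=G_0\triangleleft G_1\triangleleft\dots\triangleleft G'$ with infinite cyclic quotients. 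Then $G_2$ fits into a split extension $1\to\ZZ\to G_2\to\ZZ\to 1$, so $G_2\cong\ZZ\rtimes_\phi\ZZ$ with $\phi\in\operatorname{Aut}(\ZZ)=\{\pm 1\}$; if $\phi$ is trivial then $G_2\cong\ZZ^2$, while if $\phi$ is inversion then $G_2$ is the Klein bottle group, whose index-two subgroup generated by the normal $\ZZ$ and the square of a complementary generator is isomorphic to $\ZZ^2$. In either case $\ZZ^2\leq G_2\leq G'\leq G$, and \Cref{theorem_caract_entropies_G_z2_translation_like} yields $\entsft(G)=\entsft(\ZZ^2)$, the set of non-negative upper semi-computable numbers.

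The main obstacle is confined to this last case: all the analytic work has been packaged into \Cref{theorem_caract_entropies_G_z2_translation_like}, so what remains is purely group-theoretic, namely producing the translation-like $\ZZ^2$-action. The cleanest route is the embedding $\ZZ^2\hookrightarrow G$, and the genuine content is the structural fact that a polycyclic group of Hirsch index at least two contains $\ZZ^2$, which I extract from the splitting of the bottom two-step piece of a strong polycyclic series as sketched above.
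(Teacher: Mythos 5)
Your proposal is correct and follows essentially the same route as the paper: a direct computation for finite groups when $h(G)=0$, the finite-index lemma (\Cref{proposition_virt_Z_has_perronentropies}) with Lind's characterization for $h(G)=1$, and for $h(G)\geq 2$ the same embedding of $\ZZ^2$ into $G$ via the bottom two-step piece $\ZZ\rtimes_\phi\ZZ$ of a poly-$C_\infty$ series (including the index-two $\ZZ^2$ inside the Klein bottle group when $\phi$ is inversion), followed by \Cref{theorem_caract_entropies_G_z2_translation_like}. Your explicit verification that $\entsft(\ZZ)$ is closed under division by positive integers is a detail the paper leaves implicit, but the argument is otherwise the same.
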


\begin{proof}
	As $G$ is poly-$C_{\infty}$-by-finite, we have that $G = N_0 \triangleright N_1 \triangleright \dots \triangleright N_{h(G)} \triangleright N_{h(G)+1} = \{1_G\}$ where every quotient except the first one is an infinite cyclic group. If $h(G) = 0$, then $G = N_0 \triangleright N_1 = \{1_G\}$ is necessarily a finite group $F$. As every F\o lner sequence in a finite group is eventually the whole group, we have that for any subshift $X \subset \Sigma^F$,
	
	\[\htop(F \curvearrowright X) = \frac{1}{|F|}\log(|L_F(X)|).\]
	
	In particular, the entropy of every subshift is of the form we claim. To show that every such number occurs, consider the SFT $X^n_{\textrm{unif}} \subset \{1,2,\dots,n\}^{F}$ consisting of the uniform configurations $x_i$ such that $x_i(f) = i$ for every $f \in F$. Clearly $\htop(F \curvearrowright X^n_{\textrm{unif}}) = \frac{1}{|F|}\log(n)$. This proves the first claim.
	
	If $h(G)=1$ then $G = N_0 \triangleright N_1 \triangleright N_{2} = \{1_G\}$. As $N_1 \cong \{1_G\}\rtimes \ZZ$ then $N_1 \cong \ZZ$. This means that $G$ is virtually $\ZZ$. By~\Cref{proposition_virt_Z_has_perronentropies} the claim holds for this case as well.
	
	Let $h(G)\geq 2$. We will show that $\ZZ^2$ embeds into $G$. Indeed, we have that $N_{h(G)}\cong \ZZ$ and that $N_{h(G)-1} \cong N_{h(G)} \rtimes \ZZ$ is a subgroup of $G$. Hence, have that $N_{h(G)-1} \cong \ZZ \rtimes_{\varphi} \ZZ$ for some homomorphism $\varphi\colon \ZZ \to \textrm{Aut}(\ZZ)$. There are two cases: either $\varphi(1) = \textrm{id}$ or $\varphi(1)$ is multiplication by $-1$. The first case yields $N_{h(G)-1} \cong \ZZ^2$ and hence $\ZZ^2$ embeds into $G$. In the second case note that $\varphi(2)=\textrm{id}$ and thus $\ZZ \rtimes_{\varphi} 2\ZZ$ is isomorphic to $\ZZ^2$. Hence $N_{h(G)-1}$ contains a finite index copy of $\ZZ^2$ and thus as $N_{h(G)-1}$ embeds into $G$, we obtain that $\ZZ^2$ embeds into $G$ as well.
	
	Therefore, whenever $h(G)\geq 2$ we have that $\ZZ^2$ embeds into $G$. In particular $\ZZ^2$ acts translation-like on $G$. As every polycyclic-by-finite group is finitely generated and has decidable word problem, we can apply~\Cref{theorem_caract_entropies_G_z2_translation_like} to obtain the desired conclusion.
\end{proof}

\begin{remark}\label{rem:notfullpower}
	In the previous proof we did not use the full power of~\Cref{theorem_caract_entropies_G_z2_translation_like}. We only applied it to the case where $\ZZ^2$ actually embeds into $G$. The next application will rely strongly on translation-like actions.
\end{remark}

\subsection{Products of infinite finitely generated groups}\label{subsec:products}

In this section we shall make use of the following theorem by Seward~\cite{Seward2014}

\begin{theorem}[Theorem 1.4 of~\cite{Seward2014}]\label{theorem_ofSeward}
	Every infinite and finitely generated group admits a translation-like action of $\ZZ$.
\end{theorem}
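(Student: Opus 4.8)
The statement is purely geometric, so the plan is to unwind the definition and then build the action by hand on the Cayley graph. A translation-like action $\ZZ \curvearrowright G$ amounts to a single bijection $\sigma\colon G \to G$ (the image of the generator $1$, with $n\cdot g = \sigma^{n}(g)$) subject to two conditions: the displacement set $\{\sigma(g)g^{-1} \mid g \in G\}$ is finite, which gives boundedness of the whole action since $\sigma^{n}(g)g^{-1}$ then lies in a product of $n$ copies of that finite set; and every $\sigma$-orbit is infinite, which is exactly freeness. Fixing a finite symmetric generating set $S$ and working in the Cayley graph $\Gamma$, boundedness says $g$ and $\sigma(g)$ always lie within a fixed graph distance $R$, and freeness says the functional graph of $\sigma$ (one out-edge $g \to \sigma(g)$, one in-edge at each vertex) is a disjoint union of \emph{bi-infinite} paths. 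So the whole theorem reduces to exhibiting a partition of the vertex set of $\Gamma$ into bi-infinite paths whose consecutive vertices lie within bounded distance.

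First I would dispose of the easy case to isolate the real difficulty. If $G$ has an element $t$ of infinite order, then $\sigma(g)=tg$ works immediately, since $\sigma(g)g^{-1}=t$ is constant and every orbit $\{t^{n}g\}$ is infinite. The substance of the theorem lies entirely in the torsion case --- this is the Burnside-problem flavour of Seward's result --- where no single left-translation can serve as $\sigma$, the displacement $\sigma(g)g^{-1}$ must genuinely vary with $g$, and only the coarse geometry of $\Gamma$ (connected, infinite, bounded degree $|S|$) is available.

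The core is then a graph-theoretic construction over an exhaustion. Fix finite connected subsets $\varnothing = C_0 \subset C_1 \subset C_2 \subset \cdots$ with $\bigcup_n C_n = G$, say metric balls. By induction on $n$ I would maintain a family $\mathcal{A}_n$ of pairwise vertex-disjoint finite simple paths (``arcs'') with three invariants: the arcs of $\mathcal{A}_n$ cover exactly $C_n$; consecutive vertices of every arc are within distance $R$; and every arc created at an earlier stage has had \emph{both} of its endpoints strictly prolonged between consecutive stages. At step $n+1$ one uses connectivity of $\Gamma$ to extend the two endpoints of each arc outward, to absorb the newly revealed vertices of $C_{n+1}\setminus C_n$, and to splice arcs together when necessary, all while preserving disjointness and the bounded-step condition. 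Passing to the limit, each arc that is repeatedly extended at both ends converges to a bi-infinite path; these paths partition $G$, and the induced $\sigma$ is the desired action.

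The main obstacle is precisely the tension between the three invariants. Prolonging an endpoint into a fresh vertex within distance $R$ can fail when the local neighbourhood has already been consumed by other arcs, yet one must never close an arc into a finite cycle (which would destroy freeness) nor allow an endpoint to stop growing (which would leave a finite or one-sided orbit). Resolving this requires a careful simultaneous scheduling of extensions: reserving fresh vertices ahead of time, rerouting arcs around congested regions using that balls in $\Gamma$ keep growing, and guaranteeing that every endpoint is acted on infinitely often. It is here that infiniteness and bounded degree of $\Gamma$ are used essentially, and this bookkeeping is the technical heart of the proof. Once it is in place, the two remaining verifications are routine: boundedness is immediate since every step has length at most $R$, and freeness follows because the extension invariant forces each limiting path to be bi-infinite.
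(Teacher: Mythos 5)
You have not given a proof; you have given a correct reduction followed by an accurate description of the difficulty, and then deferred that difficulty. The first two paragraphs are fine: a translation-like action of $\ZZ$ on $G$ is indeed the same thing as a bijection $\sigma\colon G \to G$ with finite displacement set and no finite orbits, equivalently a partition of the Cayley graph into bi-infinite paths with uniformly bounded steps, and the case where $G$ has an element of infinite order is immediate. But the torsion case is the entire content of Seward's theorem, and there your text explicitly concedes the point: ``resolving this requires a careful simultaneous scheduling of extensions\dots this bookkeeping is the technical heart of the proof.'' No such scheduling is exhibited, and none of your three invariants supplies a mechanism that prevents an endpoint from being trapped by previously consumed vertices, forces arcs never to close into cycles, or guarantees both ends of every arc grow infinitely often. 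Indeed the invariants are in tension with each other in a way you do not resolve: if the arcs of $\mathcal{A}_n$ cover \emph{exactly} $C_n$ and every endpoint must be strictly prolonged into $C_{n+1}\setminus C_n$, then all endpoints must sit near $\partial C_n$ at every stage, and splicing or rerouting interior segments (which you invoke to escape congestion) destroys the pointwise stabilization needed for your limit argument unless you prove the reroutes eventually stop touching each fixed vertex. As written, the core of the theorem is assumed rather than proved.

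For calibration: the paper you are reading does not prove this statement at all --- it imports it from Seward (Theorem~1.4 of the cited work) --- and Seward's actual argument fills the gap with a concrete combinatorial device rather than greedy scheduling. One passes to a spanning tree of the Cayley graph and uses the classical theorem of Sekanina/Karaganis that the \emph{cube} of a finite connected graph (in particular of a finite tree) is Hamiltonian-connected, i.e.\ admits a Hamiltonian path between any two prescribed endpoints; an infinite locally finite tree is then decomposed (using K\"onig's lemma to extract a ray) into finite subtrees whose cube-Hamiltonian paths, with endpoints prescribed in advance, are concatenated into the desired spanning bi-infinite paths. The freedom to prescribe \emph{both} endpoints of each finite piece is exactly the missing idea that makes your splicing step work and removes the entrapment problem; it also yields the quantitative conclusion that the displacement can be bounded by $3$ in the word metric. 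Without this (or some substitute of comparable strength), your outline cannot be completed as stated.
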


\begin{corollary}\label{corollary_ofsewards}
	Let $G_1,G_2$ be infinite and finitely generated groups. Then $G_1 \times G_2$ admits a translation-like action of $\ZZ^2$.
\end{corollary}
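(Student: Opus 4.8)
The plan is to build the desired $\ZZ^2$-action on $G_1 \times G_2$ as a coordinatewise ``product'' of two $\ZZ$-actions supplied by Seward's theorem. First I would apply~\Cref{theorem_ofSeward} to each factor: since $G_1$ and $G_2$ are both infinite and finitely generated, there exist translation-like actions $\ZZ \overset{1}{\curvearrowright} G_1$ and $\ZZ \overset{2}{\curvearrowright} G_2$. Writing $m \cdot_1 g_1$ and $n \cdot_2 g_2$ for these actions, I would then define an action of $\ZZ^2 = \ZZ \times \ZZ$ on $G_1 \times G_2$ by letting each copy of $\ZZ$ act on the corresponding coordinate, namely
\[ (m,n)\cdot(g_1,g_2) \isdef (m \cdot_1 g_1,\ n \cdot_2 g_2). \]
That this is a genuine left action of $\ZZ^2$ is immediate from the fact that each $\cdot_i$ is a $\ZZ$-action and the two coordinates do not interact.

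The remaining task is to verify the two defining properties of a translation-like action, and here I would invoke the group-theoretic reformulation noted right after the definition: it suffices to check freeness together with the requirement that for each fixed element the set of displacements $(h\cdot g)g^{-1}$ is finite. For freeness, suppose $(m,n)\cdot(g_1,g_2) = (g_1,g_2)$; this forces $m\cdot_1 g_1 = g_1$ and $n\cdot_2 g_2 = g_2$, and since each factor action is free these give $m = n = 0$, so $(m,n) = (0,0)$. For boundedness, I would fix $(m,n) \in \ZZ^2$ and compute
\[ \big((m,n)\cdot(g_1,g_2)\big)(g_1,g_2)^{-1} = \big((m\cdot_1 g_1)g_1^{-1},\ (n\cdot_2 g_2)g_2^{-1}\big). \]
Since $\ZZ \overset{1}{\curvearrowright} G_1$ is bounded the first coordinate ranges over a finite subset of $G_1$ as $g_1$ varies, and likewise the second coordinate ranges over a finite subset of $G_2$ as $g_2$ varies; hence the full set of displacements is contained in a product of two finite sets and is therefore finite.

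I do not expect any genuine obstacle in this argument: the entire content is Seward's theorem, which is used as a black box, after which everything reduces to the elementary observation that a coordinatewise product of two free (respectively bounded) actions is again free (respectively bounded). The one point worth stating carefully is the choice of finite generating set for $G_1 \times G_2$, namely the union of the two generating sets embedded in their respective factors; with this choice the word metric on the product is exactly the sum of the word metrics on the factors, which is what makes the coordinatewise displacement computation above correctly express boundedness in $G_1 \times G_2$.
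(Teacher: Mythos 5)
Your proof is correct and is exactly the argument in the paper: apply Seward's theorem (\Cref{theorem_ofSeward}) to each factor and let $\ZZ^2$ act coordinatewise on $G_1 \times G_2$. The paper simply asserts that this action ``satisfies the requirements,'' so your explicit verification of freeness and boundedness (including the choice of generating set for the product) just fills in details the paper leaves to the reader.
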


\begin{proof}
	By~\Cref{theorem_ofSeward}, there exist translation-like actions $\ZZ \overset{\alpha_1}{\curvearrowright} G_1$ and $\ZZ \overset{\alpha_2}{\curvearrowright} G_2$. The $\ZZ^2$-action given by $(n_1,n_2) \cdot (g_1,g_2) \isdef (n_1\cdot_{\alpha_1}g_1,n_2\cdot_{\alpha_2}g_2)$ satisfies the requirements.
\end{proof}

\begin{corollary}\label{corollary_entropy_ofproducts}
	Let $G_1,G_2$ be two infinite, amenable and finitely generated groups with decidable word problem. The set of topological entropies of non-empty $G_1 \times G_2$-SFTs is exactly the set of non-negative upper semi-computable numbers.
\end{corollary}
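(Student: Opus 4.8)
The plan is to show that $G_1 \times G_2$ satisfies all the hypotheses of~\Cref{theorem_caract_entropies_G_z2_translation_like} and then simply invoke that theorem. Recall those hypotheses: the group must be finitely generated, amenable, have decidable word problem, and admit a translation-like action by $\ZZ^2$. The last condition is already handed to us by~\Cref{corollary_ofsewards}, which states precisely that a product of two infinite finitely generated groups admits a translation-like action of $\ZZ^2$. So the work reduces to verifying the three remaining structural properties of $G_1 \times G_2$.

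First I would check finite generation and amenability, both of which are standard closure properties. If $S_i$ is a finite generating set for $G_i$, then $(S_1 \times \{1_{G_2}\}) \cup (\{1_{G_1}\} \times S_2)$ generates $G_1 \times G_2$, so the product is finitely generated. For amenability, if $\{F_n^{(1)}\}$ and $\{F_n^{(2)}\}$ are F\o lner sequences for $G_1$ and $G_2$ respectively, then $\{F_n^{(1)} \times F_n^{(2)}\}$ is a F\o lner sequence for $G_1 \times G_2$, since invariance factors across the two coordinates; hence the product is amenable.

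Next I would verify that $G_1 \times G_2$ has decidable word problem, which is the only step requiring a (very light) argument. Using the generating set above, any word $w$ over these generators can be split by projecting onto each factor: let $w_1 \in S_1^*$ and $w_2 \in S_2^*$ be the subwords obtained by deleting the letters coming from the other factor (the order is preserved, and letters from distinct factors commute, so this is well defined up to the relation in $G_1\times G_2$). Then $\underline{w} = 1_{G_1 \times G_2}$ if and only if $\underline{w_1} = 1_{G_1}$ and $\underline{w_2} = 1_{G_2}$. Since each $G_i$ has decidable word problem, we can run the respective decision procedures on $w_1$ and $w_2$ and accept exactly when both accept; this decides $\texttt{WP}(G_1 \times G_2)$.

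With all four hypotheses confirmed,~\Cref{theorem_caract_entropies_G_z2_translation_like} applies directly to $G_1 \times G_2$ and yields that $\entsft(G_1 \times G_2)$ is exactly the set of non-negative upper semi-computable numbers, which is the claim. I do not anticipate any genuine obstacle here: the result is a clean specialization of~\Cref{theorem_caract_entropies_G_z2_translation_like}, with~\Cref{corollary_ofsewards} supplying the one nontrivial input (the translation-like action), and the remaining verifications being routine closure properties of finitely generated amenable groups with decidable word problem.
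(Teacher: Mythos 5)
Your proposal is correct and matches the paper's proof, which likewise deduces the result by checking that $G_1 \times G_2$ has decidable word problem, invoking \Cref{corollary_ofsewards} for the translation-like action of $\ZZ^2$, and then applying \Cref{theorem_caract_entropies_G_z2_translation_like}. The only difference is that you spell out the routine closure properties (finite generation, amenability, and the projection argument for the word problem) that the paper dismisses as clear.
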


\begin{proof}
	Clearly $G_1 \times G_2$ has decidable word problem. By the previous corollary it admits a translation-like action of $\ZZ^2$. The result follows from~\Cref{theorem_caract_entropies_G_z2_translation_like}.
\end{proof}

\subsection{Countably infinite amenable groups}\label{subsec:product_nonfg}

Let us now consider the case of countably infinite amenable groups which are not necessarily finitely generated. In the remainder of this section we will need to speak about the word problem for arbitrary countable groups. We shall say that a group presentation $\langle \NN \mid R \subset \NN^* \rangle$ has decidable word problem if there exists an algorithm which on entry $w \in \NN^*$ decides whether $\underline{w} = 1$ in the group defined by that presentation. We shall say that a countable group $G$ has \define{decidable word problem} if it admits a presentation with decidable word problem. Note that if $G$ has decidable word problem, then every finitely generated subgroup of $G$ also does, but the converse may not hold, see for instance~\cite[Example 5.4]{Barbieri2017Tesis}.

\begin{proposition}\label{proposition_countablegrouphasUSCentropy}
	Let $G$ be a countably infinite amenable group which admits a decidable presentation and let $X \subset \Sigma^G$ be a $G$-subshift of finite type. Then $\htop(G \curvearrowright X)$ is upper-semi computable.
\end{proposition}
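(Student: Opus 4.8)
The plan is to adapt the algorithmic argument of \Cref{proposition_ECSubshift_has_USC_entropy} to the setting where $G$ is only countably, rather than finitely, generated. Fix a decidable presentation $\langle \NN \mid R \subset \NN^*\rangle$ of $G$, so that there is an algorithm $T_{\texttt{WP}}$ which on input $w \in \NN^*$ decides whether $\underline{w}=1_G$, and fix a finite set $\FF$ of forbidden patterns defining $X$, where each element of each support is encoded by a word over $\NN$. The lack of a finite generating set is handled by replacing the single sequence of Cayley balls with an exhausting sequence of finite approximations: for each $n$ I would let $B_n$ be the set of elements of $G$ represented by words of length at most $n$ over the generators $\{1,\dots,n\}\subset\NN$, computed as a finite set of canonical representatives using $T_{\texttt{WP}}$ to decide equality. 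Since every element of $G$ equals $\underline{w}$ for some finite word over $\NN$, one has $\bigcup_n B_n = G$, each $B_n$ is finite, and the list of $B_n$ is effectively computable.

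Because $X$ is an SFT rather than merely effectively closed, I can dispense with the maximal recursively enumerable set of pattern codings and \Cref{lemma_ABS}, and instead test genuine local admissibility directly. For a finite $A \subset B_n$ let $L^A_n$ be the set of restrictions to $A$ of all patterns $q\colon B_n \to \Sigma$ in which no translate of a pattern of $\FF$ appears within $B_n$; this set is computable, since for each $p \in \FF$ there are only finitely many positions $g$ for which a copy of $p$ sits inside $B_n$, and each of the finitely many resulting group products and membership tests is decided by $T_{\texttt{WP}}$. As in the finitely generated case I would set $h^A_n$ to be the least rational of the form $k/2^n$ exceeding $\frac{1}{|A|}\log(|L^A_n|)$ and define $h_n \isdef \min_{A \subset B_n} h^A_n$, which one algorithm computes in finitely many steps on input $n$.

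It then remains to verify that $\{h_n\}_{n\in\NN}$ is non-increasing with $\inf_n h_n = \htop(G \curvearrowright X)$. Monotonicity follows from $B_n \subset B_m$ together with the fact that the restriction to $B_n$ of a pattern locally admissible on $B_m$ is locally admissible on $B_n$; this gives $L^A_m \subseteq L^A_n$ and hence $h_m \le h_n$ for $m>n$. The bound $h_n \ge \htop(G \curvearrowright X)$ follows from the containment $L_A(X) \subseteq L^A_n$ together with the infimum formula \Cref{eq_entropyforidiots}. For the matching convergence, given $\varepsilon>0$ I would use \Cref{eq_entropyforidiots} to choose a finite $F$ with $\frac{1}{|F|}\log(|L_F(X)|)\le \htop(G\curvearrowright X)+\varepsilon$, then apply \Cref{lemma_aproximalito} to obtain $K \supset F$ such that $p \in L_F(X)$ if and only if some $q \in L^{\texttt{loc}}_K(X)$ restricts to $p$; for every $N$ with $B_N \supset K$ this forces $L^F_N = L_F(X)$, whence $h_N \le h^F_N \le \htop(G\curvearrowright X)+\varepsilon + 2^{-N}$.

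The main obstacle, relative to the finitely generated case, is not analytic but organizational: one must guarantee that the approximating sets $B_n$ genuinely exhaust $G$ and that every local-admissibility test on $B_n$ — forming group products of words over $\NN$ and deciding membership in $B_n$ — is effective, which is precisely what a decidable presentation provides. No new amenability input is required beyond the infimum formula \Cref{eq_entropyforidiots} and the approximation \Cref{lemma_aproximalito}, both already available for arbitrary countable amenable groups, so the substance of the argument is setting up the exhaustion correctly and reusing the monotone-convergence scheme of \Cref{proposition_ECSubshift_has_USC_entropy}.
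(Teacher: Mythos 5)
Your argument is correct, but it is not the route the paper takes. The paper's proof is a short reduction to the finitely generated case: since $X$ is an SFT, the finitely many forbidden patterns have supports contained in a finite set $S$; the paper passes to the subgroup $H = \langle S\rangle \leq G$ (which is amenable and inherits decidable word problem), lets $Y$ be the $H$-subshift defined by the same patterns, and observes that $X = Y^{\uparrow G}$ is the free $G$-extension of $Y$. Upper semi-computability of $\htop(H \curvearrowright Y)$ then comes from \Cref{proposition_ECSubshift_has_USC_entropy} applied to $H$ as a black box, and the equality $\htop(G \curvearrowright X) = \htop(H \curvearrowright Y)$ comes from \Cref{proposition_same_entropy_free_subshift}, i.e.\ ultimately from the addition formula of \Cref{theorem_addition_formula} applied to the trivial chart. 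You instead re-run the algorithm of \Cref{proposition_ECSubshift_has_USC_entropy} directly on $G$, replacing the Cayley balls by a computable exhaustion $\{B_n\}$ of $G$ and replacing the recursively enumerable list of pattern codings from \Cref{lemma_ABS} by direct local-admissibility tests against the finite set $\FF$ --- which is legitimate precisely because $X$ is an SFT and the presentation is decidable, so all translates of forbidden patterns fitting inside $B_n$ can be found and checked effectively. Your monotonicity and convergence arguments (via \Cref{eq_entropyforidiots} and \Cref{lemma_aproximalito}) go through: the dyadic precision $k/2^n$ refines as $n$ grows, so $h^A_m \leq h^A_n$ for $m > n$, and since all of $\FF$ is available from stage $0$, the paper's extra requirement that enough forbidden codings have been enumerated (the choice of $N_2$ in \Cref{proposition_ECSubshift_has_USC_entropy}) becomes vacuous, as you note. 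What the paper's route buys is brevity and conceptual clarity: it isolates the observation that an SFT ``lives on'' the finitely generated subgroup spanned by its defining supports, and recycles two results already proved. What your route buys is self-containment: it avoids the free-extension formalism and the addition formula altogether, and shows that the algorithmic scheme itself survives the passage from finitely generated to general countable groups with decidable presentations, at the mild price of redoing the bookkeeping for the exhaustion $\{B_n\}$.
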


\begin{proof}
	If $X$ is a $G$-subshift of finite type, there is a finite set of patterns $\FF$ which defines it. Let $S = \bigcup_{p \in \FF}\supp(p)$ be the union of the supports of patterns in $\FF$ and let $H = \langle S \rangle \leq G$ be the finitely generated subgroup of $G$ generated by $S$. As $G$ is amenable and has decidable word problem, then $H$ is amenable and has decidable word problem. Let $Y$ be the $H$-subshift defined by $\FF$. We clearly have that $X = Y^{\uparrow G}$ where $Y^{\uparrow G}$ is the free $G$-extension of $Y$. By~\Cref{proposition_ECSubshift_has_USC_entropy} $h_{\text{top}}(H \curvearrowright Y)$ is upper semi-computable. Therefore by~\Cref{proposition_same_entropy_free_subshift} we have that $h_{\text{top}}(H \curvearrowright Y) =h_{\text{top}}(G \curvearrowright Y^{\uparrow G}) = h_{\text{top}}(G \curvearrowright X)$ and hence $h_{\text{top}}(G \curvearrowright X)$ is also upper semi-computable.
\end{proof}

\begin{corollary}~\label{corollary_caract_entropies_full}
	Let $G$ be an amenable countably infinite group with decidable word problem and which admits a finitely generated subgroup on which $\ZZ^2$ acts translation-like. Then
	\[ \entsft(G) = \entsft(\ZZ^2). \]
\end{corollary}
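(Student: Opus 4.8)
The plan is to establish the equality by proving the two inclusions $\entsft(\ZZ^2) \subset \entsft(G)$ and $\entsft(G) \subset \entsft(\ZZ^2)$ separately, using the finitely generated witness subgroup as a bridge. Let $H \leq G$ be the finitely generated subgroup on which $\ZZ^2$ acts translation-like. First I would record that $H$ satisfies all the hypotheses of \Cref{theorem_caract_entropies_G_z2_translation_like}: it is finitely generated by assumption, it is amenable as a subgroup of the amenable group $G$, it has decidable word problem because every finitely generated subgroup of a group with decidable word problem has decidable word problem, and it admits a translation-like action of $\ZZ^2$ by hypothesis. Hence \Cref{theorem_caract_entropies_G_z2_translation_like} gives $\entsft(H) = \entsft(\ZZ^2)$.

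For the inclusion $\entsft(\ZZ^2) \subset \entsft(G)$, I would transfer entropies from $H$ to $G$ through the free $G$-extension. Given any $H$-SFT $X$, consider $X^{\uparrow G} = X_{\gamma}[\{0\}]$ for the canonical free $G$-chart $(\{0\},\gamma)$ of $H$. Since the one-point system $\{0\}$ is trivially a $G$-SFT, \Cref{remark_SFTchart_SFT_is_SFT} shows that $X^{\uparrow G}$ is a $G$-SFT, and \Cref{proposition_same_entropy_free_subshift} gives $\htop(G \curvearrowright X^{\uparrow G}) = \htop(H \curvearrowright X)$. Letting $X$ range over all $H$-SFTs yields $\entsft(H) \subset \entsft(G)$, and combined with the previous paragraph, $\entsft(\ZZ^2) = \entsft(H) \subset \entsft(G)$.

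For the reverse inclusion $\entsft(G) \subset \entsft(\ZZ^2)$, I would invoke the computability obstruction. Every $G$-SFT has non-negative topological entropy, and since $G$ is countably infinite, amenable and admits a decidable presentation, \Cref{proposition_countablegrouphasUSCentropy} shows this entropy is upper semi-computable. Therefore $\entsft(G)$ is contained in the set of non-negative upper semi-computable numbers, which by \Cref{theorem_HochmanTom} is exactly $\entsft(\ZZ^2)$. Combining the two inclusions gives $\entsft(G) = \entsft(\ZZ^2)$.

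The argument is essentially a bookkeeping assembly of previously established results, so I do not expect a serious obstacle. The only points requiring care are verifying that the three hypotheses of \Cref{theorem_caract_entropies_G_z2_translation_like} genuinely descend to the finitely generated subgroup $H$ --- in particular that decidability of the word problem passes to finitely generated subgroups --- and that the free extension of an SFT remains an SFT, which is handled by viewing $X^{\uparrow G}$ as the embedding $X_{\gamma}[\{0\}]$ and invoking \Cref{remark_SFTchart_SFT_is_SFT}.
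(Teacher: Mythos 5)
Your proposal is correct and follows essentially the same route as the paper's proof: one inclusion via \Cref{proposition_countablegrouphasUSCentropy}, the other via \Cref{theorem_caract_entropies_G_z2_translation_like} applied to the finitely generated subgroup $H$ together with the free extension $X^{\uparrow G}$ and \Cref{proposition_same_entropy_free_subshift}. Your extra care in checking that the hypotheses descend to $H$ and in justifying via \Cref{remark_SFTchart_SFT_is_SFT} that $X^{\uparrow G}$ is an SFT only makes explicit what the paper leaves implicit.
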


\begin{proof}
	By~\Cref{proposition_countablegrouphasUSCentropy} we get $\entsft(G) \subset \entsft(\ZZ^2)$. Let $H$ be a finitely generated subgroup on which $\ZZ^2$ acts translation-like. As $H$ has decidable word problem and is amenable, by~\Cref{theorem_caract_entropies_G_z2_translation_like} $\entsft(H)= \entsft(\ZZ^2)$. For any $r \in  \entsft(H)$, there is an $H$-SFT $X$ such that $\htop(H \curvearrowright X)= r$. By~\Cref{proposition_same_entropy_free_subshift} we have $\htop(G \curvearrowright X^{\uparrow}) = r$ and hence $\entsft(H) \subset \entsft(G)$. This gives $\entsft(G) = \entsft(\ZZ^2)$.
\end{proof}

\begin{corollary}~\label{corollary_caract_entropies_full2}
	Let $G_1,G_2$ be amenable, countably infinite and non-locally finite groups with decidable word problem. Then
	\[ \entsft(G_1 \times G_2) = \entsft(\ZZ^2). \]
\end{corollary}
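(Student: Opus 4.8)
The plan is to deduce this statement from \Cref{corollary_caract_entropies_full}, whose hypotheses require that $G_1 \times G_2$ be amenable, countably infinite, have decidable word problem, and contain a finitely generated subgroup on which $\ZZ^2$ acts translation-like. Two of these are immediate: a product of two amenable groups is amenable, and a product of two countably infinite groups is countably infinite. So the work reduces to (i) producing the required finitely generated subgroup and (ii) checking that $G_1 \times G_2$ has decidable word problem.

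For (i), I would invoke the hypothesis that each $G_i$ is non-locally finite. By definition this means that $G_i$ possesses a finitely generated subgroup $H_i \leq G_i$ which is infinite. Then $H_1 \times H_2$ is a finitely generated subgroup of $G_1 \times G_2$, and since both $H_1$ and $H_2$ are infinite and finitely generated, \Cref{corollary_ofsewards} provides a translation-like action of $\ZZ^2$ on $H_1 \times H_2$. This supplies exactly the finitely generated subgroup demanded by \Cref{corollary_caract_entropies_full}.

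For (ii), I would argue that decidability of the word problem passes to direct products. Fixing decidable presentations $\langle \NN \mid R_i \rangle$ of each $G_i$, one forms a presentation of $G_1 \times G_2$ on the disjoint union of the two generating sets (coded into $\NN$) by adjoining the relations $R_1 \cup R_2$ together with all commutators of a first-factor generator with a second-factor generator. A word $w$ over this alphabet represents the identity precisely when its two projections $\pi_1(w)$ and $\pi_2(w)$, obtained by deleting the letters belonging to the other factor, represent the identity in $G_1$ and $G_2$ respectively, since the element coded by $w$ equals $(\underline{\pi_1(w)},\underline{\pi_2(w)})$. As the projections are computable and each factor has decidable word problem, the word problem of $G_1 \times G_2$ is decidable. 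With all hypotheses verified, \Cref{corollary_caract_entropies_full} yields $\entsft(G_1 \times G_2) = \entsft(\ZZ^2)$.

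The step requiring the most care is (ii): one must make precise that the naive presentation of the product is decidable in the sense appropriate to groups that need not be finitely generated, that is, with a presentation indexed by $\NN$. The projection argument makes this routine, but it deserves to be stated explicitly, since decidability of the word problem does not in general pass to arbitrary subgroups or quotients; here it is precisely the direct-product structure, together with the computability of the coordinate projections, that makes the reduction valid.
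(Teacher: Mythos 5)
Your proposal is correct and follows essentially the same route as the paper: non-local finiteness gives infinite finitely generated subgroups $H_1 \leq G_1$, $H_2 \leq G_2$, \Cref{corollary_ofsewards} gives a translation-like $\ZZ^2$-action on $H_1 \times H_2$, and \Cref{corollary_caract_entropies_full} concludes. Your only addition is spelling out the decidability of the word problem for the product via computable coordinate projections, which the paper asserts without proof; this is a harmless (and reasonable) elaboration rather than a different argument.
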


\begin{proof}
	$G_1\times G_2$ is amenable, countably infinite and has decidable word problem. Furthermore, as neither group is locally finite, there are infinite and finitely generated subgroups $H_1 \leq G_1$ and $H_2 \leq G_2$. By~\Cref{corollary_ofsewards} $H_1 \times H_2$ admits a translation-like action of $\ZZ^2$. The result follows from~\Cref{corollary_caract_entropies_full}.
\end{proof}

\begin{remark}
	The non-locally finite condition in~\Cref{corollary_caract_entropies_full2} is necessary. If $G$ is a locally finite group and $X \subset \Sigma^G$ is a subshift of finite type. We can use the same technique as in~\Cref{proposition_countablegrouphasUSCentropy} to reduce its entropy to the entropy of the group which is finitely generated by the support of its forbidden patterns. But the entropy of any subshift in a finite group is necessarily a rational multiple of the logarithm of a positive integer.
\end{remark}

\subsection{Branch groups}\label{subsec:branch}

Suppose that $G$ is a countable amenable group with decidable word problem which contains the product of two non-locally finite and countably infinite subgroups $G_1 \times G_2$ as a subgroup. Then~\Cref{corollary_caract_entropies_full2} and~\Cref{corollary_caract_entropies_full} imply that $\entsft(G) = \entsft(\ZZ^2)$. 

There are many examples satisfying the previous hypothesis within the class of branch groups~\cite{BartholdiGrigorchuk2003Branchgroups}. There is more than one definition of branch group, we shall work with the following one:

\begin{definition}
	A group $G$ is called a \define{branch group} if there exist two sequences of groups $(L_i)_{i \in \NN}$ and $(H_i)_{i \in \NN}$ and a sequence of positive integers $(k_i)_{i \in \NN}$ such that $k_0 = 1$, $G = L_0 = H_0$ and:
	\begin{enumerate}
		\item $\bigcap_{i \in \NN}{H_i} = 1_G$.
		\item $H_i$ is normal in $G$ and has finite index.
		\item there are subgroups $L_i^{(1)},\dots, L_i^{k(i)}$ of $G$ such that $H_i = L_i^{(1)}\times\dots\times L_i^{k(i)}$ and each of the $L_i^{(j)}$ is isomorphic to $L_i$.
		\item Conjugation by elements of $g$ transitively permutes the factors in the above product decomposition.
		\item $k_{i}$ properly divides $k_{i+1}$ and each of the factors $L_i^{(j)}$ contains $k_{i+1}/k_i$ factors $L_{i+1}^{(j')}$.
	\end{enumerate}
\end{definition}

This allows us to state the following result

\begin{theorem}\label{theorem_branch_groups}
	Let $G$ be an infinite, finitely generated, amenable branch group with decidable word problem. Then $\entsft(G) = \entsft(\ZZ^2)$.
\end{theorem}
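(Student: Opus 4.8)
The plan is to reduce the statement entirely to the remark immediately preceding it: once we exhibit inside $G$ a subgroup isomorphic to $G_1 \times G_2$ where $G_1$ and $G_2$ are countably infinite and non-locally finite, the combination of \Cref{corollary_caract_entropies_full2} and \Cref{corollary_caract_entropies_full} yields $\entsft(G) = \entsft(\ZZ^2)$ without further work. Since $G$ is amenable, countably infinite (being finitely generated and infinite) and has decidable word problem by hypothesis, the only thing left to produce is the internal direct product of two suitable subgroups. Thus the whole proof is a purely group-theoretic task: locate two commuting, infinite, non-locally finite subgroups of $G$ meeting trivially.

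First I would record that $G$ is non-locally finite: being finitely generated and infinite, $G$ is a finitely generated infinite subgroup of itself, which already witnesses the failure of local finiteness. Next I would pass to the branch structure at level $i=1$. Since $k_0 = 1$ properly divides $k_1$, we have $k_1 \geq 2$, so $H_1 = L_1^{(1)} \times \dots \times L_1^{(k_1)}$ is a direct product of at least two isomorphic copies of $L_1$, and $H_1$ is normal of finite index in $G$.

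The key step is to argue that the factors $L_1^{(j)}$ are non-locally finite, and for this I would invoke two standard facts. First, if a finite-index subgroup of a group is locally finite then the whole group is locally finite: for any finitely generated $K \leq G$, the subgroup $K \cap H_1$ has finite index in $K$, hence is finitely generated (Schreier), and being contained in a locally finite group it would be finite, forcing $K$ finite. Applying this contrapositively to the non-locally finite group $G$ shows that its finite-index subgroup $H_1$ is non-locally finite. Second, a direct product of finitely many groups is locally finite if and only if each factor is locally finite. Combining these, at least one factor $L_1^{(j)} \cong L_1$ must be non-locally finite, hence $L_1$ is non-locally finite, and therefore so is every factor. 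Each $L_1^{(j)}$ is moreover countably infinite, being an infinite subgroup of the countable group $G$.

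Finally I would set $G_1 = L_1^{(1)}$ and $G_2 = L_1^{(2)}$, which exist because $k_1 \geq 2$. As distinct factors of a direct product they commute and intersect trivially, so $G_1 \times G_2 \leq H_1 \leq G$ is a genuine internal direct product of two countably infinite, non-locally finite groups. All hypotheses of the preceding remark are now satisfied, and I would conclude $\entsft(G) = \entsft(\ZZ^2)$. The one point demanding care—the main obstacle—is the verification that the factors $L_1^{(j)}$ are non-locally finite; once that is in hand, everything reduces to a direct appeal to the results already established.
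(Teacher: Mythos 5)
Your proof is correct, and it follows the same skeleton as the paper's: both pass to the level-one branch decomposition $H_1 = L_1^{(1)} \times \dots \times L_1^{(k_1)}$ with $k_1 \geq 2$ (from $k_0 = 1$ properly dividing $k_1$), and both ultimately conclude via a translation-like $\ZZ^2$-action on a finitely generated subgroup together with \Cref{corollary_caract_entropies_full}. The difference lies in which property of the factors you verify. The paper notes that $H_1$, being of finite index in the finitely generated infinite group $G$, is itself infinite and finitely generated; since $k_1$ is finite, each $L_1^{(j)}$ is then infinite and finitely generated (a direct factor is a quotient), so \Cref{corollary_ofsewards} applies directly to $H_1$ written as a product of two infinite finitely generated groups. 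You instead establish the weaker property that the factors are non-locally finite, using two closure facts (local finiteness passes to finite-index overgroups, via your Schreier argument, and to finite direct products), and then route through the remark opening the subsection, which internally re-extracts finitely generated infinite subgroups from $L_1^{(1)}$ and $L_1^{(2)}$. Both arguments are sound; the paper's is more economical because finite generation of the factors comes for free from the branch structure, while yours is marginally more robust in that it never needs the factors themselves to be finitely generated, only non-locally finite, at the cost of proving the local-finiteness closure properties and taking a detour through the remark rather than citing \Cref{corollary_ofsewards} and \Cref{corollary_caract_entropies_full} directly.
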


\begin{proof}
	By the fifth property above, $k_1 > 1$. Furthermore, as each $H_i$ has finite index, it is also infinite and finitely generated. As $k_1$ is finite, each $L_i$ is also infinite and finitely generated. Thus $H_1 = L_1^{(1)} \times \dots \times L_1^{(k_1)}$ is a subgroup of $G$ on which $\ZZ^2$ acts translation-like. The result follows from~\Cref{corollary_caract_entropies_full}.
\end{proof}

A canonical example which satisfies all of the above properties is the following.

\begin{example}
	The set of topological entropies of non-empty SFTs in the Grigorchuk group~\cite{Grigorchukgrouporiginal1984} is exactly the set of non-negative upper semi-computable numbers.
\end{example}

\section{Final remarks}\label{subsec:finalrem}

The techniques presented in this work give tools to embed the entropies of SFTs defined on a group $G$ to groups in which $G$ embeds geometrically. As the only known non-trivial base cases are $\ZZ$ and $\ZZ^2$, we can only obtain characterizations which coincide either with $\entsft(\ZZ)$ or $\entsft(\ZZ^2)$. This raises the following question.

\begin{question}\label{question-intermediate}
	Is there any infinite and finitely generated amenable group $G$ with decidable word problem for which $\entsft(G)$ is neither $\entsft(\ZZ)$ nor $\entsft(\ZZ^2)$? 
\end{question}

Furthermore,~\Cref{theorem_caract_entropies_G_z2_translation_like} provides a full characterization of the entropies attainable by SFTs defined on polycyclic-by-finite groups, but it cannot be applied on every solvable group with decidable word problem. Two notable examples where it does not apply (at least not directly) are the Baumslag-Solitar groups $\texttt{BS}(1,n) = \langle a,b \mid bab^{-1} = a^n\rangle$ for $n \geq 2$, and the Lamplighter group $ \ZZ/2\ZZ \wr \ZZ$.

\begin{question}\label{question:BS}
	For $n \geq 2$, does it hold that $\entsft(\texttt{BS}(1,n)) = \entsft(\ZZ^2)$? 
\end{question}

\begin{question}\label{question:LL}
	Characterize $\entsft(\ZZ/2\ZZ \wr \ZZ)$. Does it coincide with either $\entsft(\ZZ)$ or $\entsft(\ZZ^2)$? 
\end{question}

\begin{acknowledgements*}
	The author wishes to thank Tom Meyerovitch, Mathieu Sablik and Ville Salo for many fruitful discussions. The author is also grateful to an anonymous referee for their helpful remarks. This research was done while the author was a postdoctoral fellow at the university of British Columbia. It was partially supported by the ANR project CoCoGro (ANR-16-CE40-0005) and the ANR project CODYS (ANR-18-CE40-0007).
\end{acknowledgements*}

\bibliographystyle{plain}
\bibliography{ref.bib}

\appendix
\section{Corrigendum}

The purpose of this corrigendum is to bring attention to an error in the proof of~\Cref{theorem_caract_entropies_G_z2_translation_like} that we do not know how to fix. We begin by restating it as a conjecture.

\begin{conjecture}\label{conjecture}
    Let $G$ be a finitely generated amenable group with decidable word problem which admits
a translation-like action by $\ZZ^2$. The set of entropies attainable by G-subshifts of finite type is the set of non-negative upper semi-computable numbers.
\end{conjecture}

We shall first explain the error in the proof of~\Cref{theorem_caract_entropies_G_z2_translation_like}. Next we shall discuss possible ways to fix the proof of the theorem and state a less general version of the result. After this, we shall show that all consequences of the main theorem stated in the article still hold, with the exception of~\Cref{corollary_caract_entropies_full}. Finally, we shall provide an update of the current state of the art regarding the questions in~\Cref{subsec:finalrem}

The author would like to thank Ville Salo for discovering the mistake in the proof.

\subsection{The error}
Let $G$ be a finitely generated amenable group with decidable word problem on which $\ZZ^2$ acts translation like. As explained in the proof of~\Cref{theorem_caract_entropies_G_z2_translation_like}, we obtain that for every integer $n>0$, there exists an upper semi-computable number $\delta_n \in [0,\frac{1}{n})$ such that \[ \delta_n + \entsft(\ZZ^2) \subset \entsft(G) \subset \entsft(\ZZ^2).  \]

Then, we incorrectly argue that this implies that $\entsft(\ZZ^2)=\entsft(G)$ due to the facts that $0 \in \entsft(G)$ and upper semi-computable numbers are stable under addition. We were implicitly arguing that the second property implies that \[ \delta_n + \entsft(\ZZ^2) = \{ h \in \entsft(\ZZ^2) : h \geq \delta_n \}.  \]

However, this deduction is incorrect. The issue is that while upper semi-computable numbers are indeed closed under addition, they are not closed under inverses (and thus under subtraction). For instance, take $x$ a computable number and let $\delta$ be an upper semi-computable number which is not computable. We claim that $x$ cannot be written as the sum of $\delta$ with another upper semi-computable number $y$. If that were the case, we would have $y$ = $x-\delta$. As $y$ is upper semi-computable, there is an algorithm which produces a sequence $(q_n)_{n \in \NN}$ of rationals such that $\inf_{n \in \NN}q_n = y$. Similarly, as $x$ is computable, there exists a sequence of rationals $(r_n)_{n \in \NN}$ such that $|x-r_n|\leq 2^{-n}$. It follows that $\sup_{n \in \NN}(r_n-q_n) = \delta$ and thus $\delta$ is lower semi-computable, which contradicts the assumption that $\delta$ is not computable.

What we can conclude with those two facts is only that 
\[ \delta_n + \entsft(\ZZ^2) \subset \{ h \in \entsft(\ZZ^2) : h \geq \delta_n \}.  \]

Which is not enough to prove the equality in~\Cref{theorem_caract_entropies_G_z2_translation_like}.

\subsection{Possible fixes and a restricted result}

A natural way to fix the proof of~\Cref{theorem_caract_entropies_G_z2_translation_like} would be to improve~\Cref{theorem_HG} by giving the additional property that the sub $G$-SFT has computable entropy, or even better, zero topological entropy. 

\begin{question}\label{question:computablesubsft}
    Let $G$ be a finitely generated amenable group with decidable word problem and let $X$ be a $G$-SFT. Does there exist a $G$-SFT $Y\subset X$ such that $\htop(G\curvearrowright Y) \leq \htop(G \curvearrowright X)$ and such that $\htop(G\curvearrowright Y)$ is computable?
\end{question}

As mentioned in~\Cref{question_zeroentropy}, it is still open whether for some countable amenable group $G$ there exists a $G$-SFT which does not contain a zero entropy SFT. If the answer to this question is negative, then the answer to~\Cref{question:computablesubsft} is positive and thus~\Cref{conjecture} would hold. We do now know the answer to any of these questions.

However, even if the answer to~\Cref{question:computablesubsft} were negative, in fact the only thing we need to prove~\Cref{conjecture} is the existence of free $G$-chart for $\ZZ^2$ with arbitrarily low computable entropy. We obtain the following restricted result.

\begin{theorem}\label{prop:restricted}
	Let $G$ be a finitely generated amenable group with decidable word problem. Suppose that for every $\varepsilon>0$ there exists a free $G$-chart $(X,\gamma)$ for $\ZZ^2$ such that $\htop(G\curvearrowright X)< \varepsilon$ is computable. Then $\entsft(G)=\entsft(\ZZ^2)$.
\end{theorem}

\begin{proof}
	On the one hand, as $G$ is finitely generated and has decidable word problem, we get by~\Cref{proposition_ECSubshift_has_USC_entropy} that $\entsft(G)\subset\entsft(\ZZ^2)$. On the other hand, using the hypothesis along~\Cref{corollary_realize_entropy} yields a sequence $(\delta_n)_{n \geq 1}$ of computable numbers which converges to zero and such that $\delta_n + \entsft(\ZZ^2)\subset\entsft(G)$. As $\delta_n$ is computable, we now do indeed have that $\delta_n + \entsft(\ZZ^2) = \{ h \in \entsft(\ZZ^2) : h \geq \delta_n \}$. From this and the fact that $0\in \entsft(G)$ we deduce that $\entsft(\ZZ^2)\subset\entsft(G)$.
\end{proof}

Of course, if there exists a free $G$-chart $(X,\gamma)$ for $\ZZ^2$ such that $\htop(G\curvearrowright X)=0$, then the conclusion of~\Cref{prop:restricted} also holds. Two classes of groups $G$ which admit free $G$-charts for $\ZZ^2$ with zero topological entropy are the following:

\begin{example}\label{ex:zeroent_subgroup}
	Let $G$ be a group on which $\ZZ^2$ embeds. Let $\psi \colon \ZZ^2 \to G$ be an injective homomorphism. Take $X =\{0\}$ a singleton and $\gamma \colon \ZZ^2\times \{0\} \to G$ be given by $\gamma(u,0)=\psi(u)$ it follows that $(X,\gamma)$ is a free $G$-chart of $\ZZ^2$. Furthermore, it is clear that $\htop(G\curvearrowright X)=0$.
\end{example}

\begin{example}\label{ex:zeroent_product}
	Let $G=H_1\times H_2$ be a group, where $H_1,H_2$ are infinite and finitely generated. Let $i \in \{1,2\}$. By~\Cref{theorem_ofSeward}, there exists a translation-like action of $\ZZ$ on $H_i$. Set $F_i = \{f \in H : n\cdot h = fh \mbox{ for } n \in \{-1,1\}, h \in H\}$. Construct the $H_i$-subshift of finite type whose elements codify all possible $\ZZ$-cocycles such that the generators move with range $F_i$, that is, the subshift $X_i$ on alphabet $F_i$ with the property that $\gamma_i \colon \ZZ\times  X_i \to H_i$ generated by $\gamma(1,x)=x(g)$ is a $\ZZ$-cocycle. An explicit construction of this subshift can be found in~\cite{Barbieri2017}.

    For each $X_i$, consider its trivial extension $\widehat{X_i}$ to $H_1\times H_2$ where the symbols are constant on the other coordinate. Take $X = \widehat{X}_1\times \widehat{X}_2$ and note that by construction $\htop(G\curvearrowright X) =0$. Notice that $X$ admits naturally a $\ZZ^2$-cocycle $\gamma$ obtained by putting together $\gamma_1$ and $\gamma_2$, hence $(X,\gamma)$ is a $G$-chart for $\ZZ^2$ which is not necessarily free. Consider the $G$-subshift $X'\subset X$ given by all configurations such that the action induced by the cocycle is free, and remark that by the choce of $F_i$, $X'$ is nonempty.

    Next, take any $\ZZ^2$-SFT with zero topological entropy $Y$ such that the shift $\ZZ^2$-action is free, such as the Robinson tiling~\cite{Robinson1971} and consider the $G$-SFT $Z=Y_{\gamma}[X]$. As $Y$ is free, it follows that $Z = Y_{\gamma}[X']$ and thus by~\Cref{theorem_addition_formula} we obtain \[ \htop(G\curvearrowright Z) = \htop( \ZZ^2 \curvearrowright Y)+\htop(G\curvearrowright X') \leq \htop( \ZZ^2 \curvearrowright Y)+\htop(G\curvearrowright X)=0.\]
    
    Hence, if we equip $Z$ with the cocycle induced by $\gamma$, we obtain a free $G$-chart of $\ZZ^2$ with zero topological entropy.
\end{example}

\subsection{Consequences of the restricted theorem}

The only consequence stated in this article that is no longer valid is~\Cref{corollary_caract_entropies_full}, which is a generalized version of~\Cref{conjecture} for countable groups.

The characterization of entropies for virtually polycyclic groups in~\Cref{subsec:poly} still holds, as the proof only applies the main theorem to groups which contain $\ZZ^2$, and there we can apply instead~\Cref{prop:restricted} by virtue of~\Cref{ex:zeroent_subgroup}. Similarly, the results about direct products of finitely generated groups in~\Cref{subsec:products} also hold, as we know that in that case we can find zero-entropy charts for $\ZZ^2$ (\Cref{ex:zeroent_product}). 

In~\Cref{subsec:product_nonfg}, as mentioned before, we have that~\Cref{corollary_caract_entropies_full} is no longer valid. However, we will argue that~\Cref{corollary_caract_entropies_full2} still holds. We recall that this result states that if $G$ is the product of any pair of amenable, countably infinite, non locally finite groups $G_1,G_2$ with decidable word problem, then $\entsft(G)=\entsft(\ZZ^2)$. Indeed, as neither $G_1$ nor $G_2$ is locally finite, we may extract amenable infinite and finitely generated subgroups $H_1\leq G_1$ and $H_2\times G_2$ with decidable word problem. Thus $H_1\times H_2 \leq G$. By~\Cref{corollary_entropy_ofproducts} it follows that $\entsft(H_1\times H_2)=\entsft(\ZZ^2)$, and so $\entsft(\ZZ^2)\subset \entsft(G)$. The other direction follows from~\Cref{proposition_countablegrouphasUSCentropy}.

Finally, the results about branch groups in~\Cref{subsec:branch} still hold: in the proof of~\Cref{theorem_branch_groups} we can instead just invoke~\Cref{corollary_entropy_ofproducts} and thus the result and its corollaries stand.

\subsection{Current state of the questions asked herein}

As mentioned above,~\Cref{question_zeroentropy} about the existence of a $G$-SFT without any zero-entropy sub $G$-SFT is still open and to the author's knowledge there has been no progress in this direction. Similarly,~\Cref{question-intermediate} which asks whether where exists an infinite and finitely generated amenable group with decidable word problem and whose space of SFT entropies lies strictly between $\entsft(\ZZ)$ and $\entsft(\ZZ^2)$ is still open. 

Questions~\ref{question:BS} and~\ref{question:LL} about the space of topological entropies of SFTs in the Baumslag-Solitar groups $B(1,n)$ and the lamplighter group $\ZZ/2\ZZ \wr \ZZ$ have been completely answered by Bartholdi and Salo in~\cite{bartholdi2024shiftslamplightergroup}: in both cases the space of topological entropies of SFTs coincides with the space of non-negative upper semi-computable real numbers.

\end{document}